\setlist[enumerate, 1]{label=(\roman*)}
\declaretheorem[style=plain,name=Theorem,qed={\tiny$\blacksquare$},numberwithin=section]{theorem}
\declaretheorem[style=plain,name=Corollary,sibling=theorem,qed={\tiny$\blacksquare$}]{corollary}
\declaretheorem[style=definition,name=Definition,sibling=theorem,qed={\tiny$\blacksquare$}]{definition}
\declaretheorem[style=plain,name=Lemma,sibling=theorem,qed={\tiny$\blacksquare$}]{lemma}
\declaretheorem[style=definition,name=Remark,sibling=theorem,qed={\tiny$\blacksquare$}]{remark}
\numberwithin{equation}{section}
\renewcommand{\lor}{{\mathcal{\mathbf{L^3}}}}
\newcommand{\eucl}{{\mathcal{\mathbf{E^2}}}}
\newcommand{\unip}{\mathbf{U}_+} 
\renewcommand{\sp}{\mathcal{S}}
\newcommand{\osp}{\vec{\mathcal{S}}}
\newcommand{\oli}{\vec{\mathcal{L}}}
\newcommand{\ci}{\mathcal{C}}
\newcommand{\oci}{\vec{\mathcal{C}}}
\newcommand{\Zb}{\Z_\bullet} 
\newcommand{\Zw}{\Z_\circ} 
\newcommand{\cp}{p} 
\newcommand{\cpf}{p_{\fboxsep=-\fboxrule\sbox0{}\wd0=4pt\ht0=4pt\relax\fbox{\box0}}}
\newcommand{\cpm}{{\odot{p}}}
\newcommand{\cpb}{p_{\bullet}}
\newcommand{\cpw}{p_{\circ}}
\newcommand{\cpmb}{\odot p_{\bullet}}
\newcommand{\cpmw}{\odot p_{\circ}}
\newcommand{\Cp}{q} 
\newcommand{\Cpf}{q_{\fboxsep=-\fboxrule\sbox0{}\wd0=4pt\ht0=4pt\relax\fbox{\box0}}}
\newcommand{\Cpm}{{\odot{q}}}
\newcommand{\Cpmb}{\odot{q_{\bullet}}}
\newcommand{\Cpmw}{\odot q_{\circ}}
\newcommand{\iso}{h} 
\newcommand{\isof}{h_{\fboxsep=-\fboxrule\sbox0{}\wd0=4pt\ht0=4pt\relax\fbox{\box0}}}
\newcommand{\isom}{{\odot{h}}}
\newcommand{\isob}{h_{\bullet}}
\newcommand{\isow}{h_{\circ}}
\newcommand{\isomb}{\mathord{\odot}h_{\bullet}}
\newcommand{\isomw}{\mathord{\odot}h_{\circ}}
\newcommand{\isoc}{h_\boxplus}
\newcommand{\cc}{c}
\newcommand{\ccf}{c_{\fboxsep=-\fboxrule\sbox0{}\wd0=.8ex\ht0=.8ex\relax\fbox{\box0}}}
\newcommand{\ccm}{{\odot{c}}}
\newcommand{\ccb}{c_{\bullet}}
\newcommand{\ccw}{c_{\circ}}
\newcommand{\ccmb}{\odot c_{\bullet}}
\newcommand{\ccmw}{\odot c_{\circ}}
\newcommand{\ccc}{c_\boxplus}
\renewcommand{\d}{\mathrm d} 
\newcommand{\si}[1]{\texttt{(#1)}} 
\newcommand{\miq}[1]{\mathscr{M}(#1)}
\renewcommand{\S}{\mathbb{S}}
\newcommand{\Z}{\mathbb{Z}}
\newcommand{\R}{\mathbb{R}}
\newcommand{\C}{\mathbb{C}}
\newcommand{\RN}[1]{%
	\textup{\uppercase\expandafter{\romannumeral#1}}%
}
\setlist[enumerate]{itemsep=2pt, topsep=0pt}
\title{Discrete Lorentz surfaces and s-embeddings II:\\maximal surfaces}
\date{\today}
\author{
	Niklas Christoph Affolter\thanks{TU Berlin, Institute of Mathematics, Straße des 17.~Juni 136, 10623 Berlin, Germany.
	\textit{E-mail addresses}: \texttt{affolter~at~posteo.net, smeenk~at~math.tu-berlin.de}}\ \footnotemark[2] ,
  Felix Dellinger\thanks{TU Wien, Institut of Discrete Mathematics and Geometry, Wiedner Hauptstr. 8-10/104, A-1040 Vienna, Austria.
  \textit{E-mail addresses}: \texttt{felix.dellinger, christian.mueller, denis.polly~at~tuwien.ac.at}} ,\\
	Christian Müller\footnotemark[2] ,
	Denis Polly\footnotemark[2] ,
	Nina Smeenk\footnotemark[1] ,
}
\begin{document}

\maketitle

\begin{abstract}
	S-embeddings were introduced by Chelkak as a tool to study the conformal invariance of the thermodynamic limit of the Ising model. Moreover, Chelkak, Laslier and Russkikh introduced a lift of s-embeddings to Lorentz space, and showed that in the limit the lift converges to a maximal surface. They posed the question whether there are s-embeddings that lift to maximal surfaces already at the discrete level, before taking the limit. We answer this question in the positive. In a previous paper we identified a subclass of s-embeddings -- isothermic s-embeddings -- that lift to (discrete) S-isothermic surfaces, which were introduced by Bobenko and Pinkall as a discretization of isothermic surfaces. In this paper we identify a special class of isothermic s-embeddings that correspond to discrete S-maximal surfaces, translating an approach of Bobenko, Hoffmann and Springborn introduced for discrete S-minimal surfaces in Euclidean space. Additionally, each S-maximal surface comes with a 1-pa\-ra\-me\-ter family of associated surfaces that are isometric. This enables us to obtain an associated family of s-embeddings for each maximal s-embedding. We show that the Ising weights are constant in the associated family.
\end{abstract}

\newpage

\setcounter{tocdepth}{1}
\tableofcontents

\newpage

\section{Introduction}

This paper is the second in a series of two papers, the first being \cite{admpsiso}. The goal of the series is to relate recent developments in statistical mechanics to (not quite so recent) developments in discrete differential geometry. A brief introduction to the two topics can be found in \cite{admpsiso}.

Let us explain which developments we are referring to. An \emph{incircular net} is a map from a quad graph to the plane, such that each quad has an incircle. Chelkak \cite{chelkaksembeddings} used incircular nets to investigate properties of the (planar) \emph{Ising model}, calling them \emph{s-embeddings}. Moreover, he introduced a way to define the coupling constants of an Ising model given an incircular net. He then introduced a way to lift an incircular net to Lorentz space $\lor = \R^{2,1}$. Surprisingly, the properties of the Ising model are invariant under isometries of $\lor$. Moreover, he showed (under additional assumptions) that in the thermodynamic limit the lift converges to a maximal surface. Similar constructions and results were obtained by Chelkak, Laslier and Russkikh \cite{clrdimer,clrpembeddings} for the dimer model using \emph{conical nets}, also called \emph{t-embeddings} or \emph{Coulomb gauges} \cite{klrr} in the statistical mechanics community. However, in this paper we will focus on incircular nets. A question posed in \cite{clrpembeddings} was whether the lifts of some conical nets or incircular nets can be understood as maximal surfaces before taking the limit, as \emph{discrete maximal surfaces}. In this paper we show that the answer is: \emph{yes}. 

In the first paper \cite{admpsiso}, we investigated the geometric properties of the Lorentz lift introduced by Chelkak. The Lorentz lift of a quad of an incircular net is a quad in $\R^{2,1}$ such that each edge is isotropic (also called lightlike). We showed that
\begin{enumerate}
  \item in each quad there is a unique timelike Lorentz sphere that contains all four edges of the quad;
	\item at each vertex there is a null-sphere (a sphere of radius zero) that contains the isotropic lines of the four incident edges;
  \item at each edge the two incident timelike spheres are touching and the apices of the incident null-spheres lie on both incident timelike spheres.
\end{enumerate}
We call the resulting congruence of spheres a \emph{null congruence}, and we showed that every null congruence is the Lorentz lift of an incircular net. In particular, the incircles of the incircular net are the orthogonal projections of the smallest Euclidean circles (the contours) of the corresponding timelike spheres, the vertices of the incircular net are the projections of the apices of the null-spheres, and the edges of the incircular net are projections of the common isotropic line of the two null-spheres and the two timelike spheres.

Subsequently, we introduced a special class of null congruences which we call \emph{isothermic congruences}. They are characterized by the planarity of the four centers of the timelike spheres around each vertex. The corresponding \emph{isothermic incircular net} has the following property: if for every edge we take the other tangent to the two adjacent incircles, the collection of all these other tangents also constitutes an incircular net. Why do we call these congruences isothermic congruences? There is a well-known discretization of isothermic surfaces called \emph{S-isothermic nets} \cite{bpisoint,bhsminimal}. We showed that isothermic congruences are in 2:1 correspondence with S-isothermic nets, hence, there is a special case of incircular nets that does correspond to isothermic surfaces at the discrete level.

In the first part of this paper, we take the specialization sequence one step further. In particular, it is well known that (smooth) maximal surfaces -- that is, Riemannian surfaces with vanishing mean curvature in $\lor$ -- are isothermic surfaces. In \cite{bhsminimal}, the authors showed that there is a special class of S-isothermic nets which corresponds to discrete maximal surfaces (albeit in Euclidean space instead of Lorentz space). This is the advantage of incorporating the theory of incircular nets into discrete differential geometry: it enables us to take known results and apply them to incircular nets. More concretely, since there is a special class of S-isothermic nets that are discrete maximal surfaces, we identify a special class of isothermic congruences that we call \emph{maximal congruences}. These maximal congruences correspond to discrete maximal surfaces, and therefore we also obtain a special class of incircular isothermic nets that correspond to discrete maximal surfaces. Following \cite{bhsminimal}, discrete maximal surfaces -- and therefore maximal incircular nets -- are in bijection with hyperbolic orthogonal circle patterns, for which there exists a variational principle which allows one to obtain them uniquely from boundary data \cite{bsvariationalcp}.

In the second part of the paper, we study the associated family $f^\varphi$, $\varphi \in \S^1 = [0,2\pi]$ of a discrete maximal surface $f^0$. The associated family of a discrete maximal surfaces was also introduced in \cite{bhsconical}, as an analogue of the associated family of a maximal surface in the smooth setup. Note that each surface $f^\varphi$ in the associated family is also a maximal surface and is isometric to the initial maximal surface $f^0$. We show that the nets $f^\varphi$ in the associated family of a discrete maximal surface are in 2:1 correspondence with contact congruences. Furthermore, by slightly modifying the way the correspondence works, we are able to show that there is also a 2:1 correspondence of nets $f^\varphi$ with null congruences. Finally, we show that the $X$-variables in the associated family are independent of $\varphi$.
As a result of the correspondence between discrete maximal surfaces and maximal incircular nets, we are also able to construct an associated family of maximal incircular nets.

\subsection{Plan of the paper}

In Section~{\protect\ref{sec:smMaximal}} we recall some important properties of smooth maximal surfaces which motivate the discrete theory.
In Section~\ref{sec:isocc} and Section~\ref{sec:isocp} we recall the necessary theory of isothermic congruences and isothermic incircular nets respectively.
In Section~\ref{sec:koebe} we explain Koebe congruences, which correspond to the conformal data from which we construct maximal congruences. In Section~\ref{sec:christoffel} we recall how to obtain the Christoffel dual, which we use in Section~\ref{sec:maximal} to define maximal congruences from Koebe congruences. This process also allows for a Weierstraß representation as given in Section~\ref{sec:weierstrass}. In Section~\ref{sec:associated} and Section~\ref{sec:radii} we investigate the associated family of corresponding S-isothermic nets and null congruences of a maximal congruence. Finally, in Section~\ref{sec:xvariables} we show that every member of the associated family defines the same Ising model.

\subsection*{Acknowledgments}

N.C.~Affolter and N.~Smeenk were supported by the Deutsche Forschungsgemeinschaft (DFG) Collaborative Research Center TRR 109 ``Discretization in Geometry and Dynamics''.  N.C.~Affolter was also supported by the ENS-MHI chair funded by MHI. 
F.~Dellinger and C.~M{\"u}ller gratefully acknowledge the support by the Austrian Science Fund (FWF) through grant I~4868 (SFB-Transregio “Discretization in Geometry and Dynamics”) and project F77 (SFB “Advanced Computational Design”) . D.~Polly was supported by the French National Agency for Research (ANR) via the grant ANR-18-CE40-0033 (DIMERS) for an inspiring research visit to Paris in the fall of 2021.

N.C.~Affolter would like to thank Dmitry Chelkak for pushing for the development of a discrete theory of maximal s-embeddings, as well as him and Misha Bashok and Rémy Mahfouf for countless explanations and discussions of the Lorentz lift. We would also like to thank Jan Techter, Cédric Boutillier, Carl Lutz, Paul Melotti, Sanjay Ramassamy, Marianna Russkikh and Boris Springborn.

\section{Smooth maximal surfaces}
\label{sec:smMaximal}

Before we develop our discrete theory, we give some reminders of results from the smooth theory. 
These serve as motivation for the discrete setting in the upcoming sections. 

An immersed surface in $\lor$ is 
called \emph{spacelike} if the induced metric on its tangent planes is 
positive definite. For a spacelike immersion, the Gauß map $N$ takes values
in the (upper half of the) spacelike unit sphere $\unip$. 

We call a spacelike surface \emph{isothermic} if it admits conformal curvature
lines. That is, there exists an \emph{isothermic immersion} $f: (u,v)\mapsto f(u,v)$ such that
\begin{align}
	N_u =\kappa_1 f_u, ~N_v = \kappa_2 f_v, ~\textrm{and } \|f_u\|^2 = \|f_v\|^2=:E,
\end{align}
for some functions $\kappa_1, \kappa_2$, which we call the \emph{principle curvatures}. 
The \emph{mean curvature} is given by the arithmetic mean of $\kappa_1$ and $\kappa_2$.

As in Euclidean space, (spacelike) isothermic surfaces in $\lor$ are preserved by
(Lorentz) M\"{o}bius transformations, in the sense that any M\"{o}bius transformation maps isothermic
surfaces to isothermic surfaces. Furthermore, they can be characterized by the existence of a certain dual isothermic surface.

\begin{theorem}\label{prop:smChristoffel}
	Let $f: U\subset \R^2 \to \lor$ be an immersion on an open subset of $\R^2$. Then, $f$ is an isothermic immersion if and only if there exists 
	an immersion $f^\ast: U\to \lor$ such that 
	\begin{align}\label{eq:smChristoffel}
		f^\ast_u = \frac{f_u}{E}, \quad f^\ast_v = -\frac{f_v}{E}.
	\end{align}
	Moreover, $f^*$ is isothermic and is called the \emph{Christoffel dual}.
\end{theorem}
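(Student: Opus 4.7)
The plan is to rewrite the existence of $f^*$ as the closedness of the $\lor$-valued one-form $\omega := \tfrac{1}{E}(f_u\,du - f_v\,dv)$ and then identify closedness of $\omega$ with the isothermic condition through the Gauß--Weingarten equations. By the Poincaré lemma (applied locally, or globally if $U$ is simply connected), the existence of $f^*$ satisfying \eqref{eq:smChristoffel} is equivalent to $d\omega = 0$. Using $f_{uv} = f_{vu}$, a short expansion shows that $d\omega = 0$ is equivalent to the pointwise identity
\[ 2E\,f_{uv} \;=\; E_v\,f_u + E_u\,f_v. \]
The entire argument then reduces to proving that this identity holds if and only if $f$ is isothermic. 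The Lorentzian setting enters only through the timelike character of the unit normal $N$ to the spacelike surface; otherwise the computation mirrors the classical Euclidean proof.

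For the forward direction, assume $f$ is isothermic with conformal factor $E$. Differentiating $\|f_u\|^2 = \|f_v\|^2 = E$ and $\langle f_u, f_v\rangle = 0$ yields $\langle f_{uv}, f_u\rangle = \tfrac12 E_v$ and $\langle f_{uv}, f_v\rangle = \tfrac12 E_u$. Differentiating $\langle N, f_u\rangle = 0$ in the $v$-direction and substituting $N_v = \kappa_2 f_v$, which is orthogonal to $f_u$, gives $\langle N, f_{uv}\rangle = 0$. Since $(f_u, f_v, N)$ is a basis of $\lor$, these three inner products determine $f_{uv}$ uniquely, and the result is precisely the displayed identity. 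For the backward direction, assume $f^*$ exists, so the identity holds. Taking the inner product with $N$ (using $\langle N,f_u\rangle = \langle N, f_v\rangle = 0$) yields $\langle N,f_{uv}\rangle = 0$. Differentiating $\langle N,f_u\rangle = 0$ in $v$ then gives $\langle N_v, f_u\rangle = 0$; since $\|N\|$ is constant, $\langle N_v,N\rangle = 0$, so $N_v$ is orthogonal to both $N$ and $f_u$ and hence proportional to $f_v$. Symmetrically $N_u \parallel f_u$, which is the curvature-line condition, so $f$ is isothermic.

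Finally, to see that $f^*$ is itself isothermic, the relations \eqref{eq:smChristoffel} give $\|f^*_u\|^2 = \|f^*_v\|^2 = 1/E$ and $\langle f^*_u, f^*_v\rangle = 0$, so $(u,v)$ are conformal coordinates for $f^*$. The tangent planes of $f$ and $f^*$ coincide pointwise, so their Gauß maps agree up to sign; substituting $N_u = \kappa_1 f_u = \kappa_1 E\, f^*_u$, and analogously in $v$, then shows that $(u,v)$ are also curvature lines for $f^*$. The only obstacle worth flagging is Lorentz-specific: one must be sure that a spacelike tangent plane is non-degenerate, so that $(f_u, f_v, N)$ indeed spans $\lor$ even though $N$ is timelike, justifying the tangential/normal decomposition used throughout.
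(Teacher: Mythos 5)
Your proof is correct, and it is the standard argument; note that the paper does not actually prove Theorem~\ref{prop:smChristoffel} at all --- Section~\ref{sec:smMaximal} only recalls it as a classical fact from the smooth theory to motivate the discrete constructions --- so there is no in-paper proof to compare against. Your reduction of the existence of $f^*$ to closedness of $\omega=\tfrac1E(f_u\,du-f_v\,dv)$, the identification of $d\omega=0$ with $2Ef_{uv}=E_vf_u+E_uf_v$, and the tangential/normal decomposition with respect to the (non-degenerate, because spacelike-plus-timelike-normal) frame $(f_u,f_v,N)$ all check out, as does the verification that $f^*$ is again isothermic with $N^*=\pm N$ and $E^*=1/E$. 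The only small looseness is inherited from the theorem statement itself: in the converse direction $E$ is only defined once one already assumes $\|f_u\|^2=\|f_v\|^2$ and $\langle f_u,f_v\rangle=0$, so the ``if'' direction really shows that for a \emph{conformal} spacelike immersion the existence of the dual forces the curvature-line condition; you implicitly adopt this reading, which is the intended one. Likewise, global (rather than local) integration of $\omega$ needs $U$ simply connected, which you correctly flag.
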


A \emph{maximal surface} is a spacelike immersion with vanishing mean curvature. 
These surfaces maximize local area. They are the natural counterparts
of minimal surfaces in Euclidean space and share many of their properties. Most importantly, we have the following 
facts (see \cite{pember2020}).

\begin{theorem}\label{thm:smMaximalDual}
	All maximal surfaces are isothermic. A spacelike isothermic immersion is maximal if and only if
	its Christoffel dual is contained in a spacelike sphere. Furthermore, the Christoffel dual is the Gauß
	map of the maximal surface (up to scaling and translation). 
\end{theorem}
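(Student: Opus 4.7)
The plan is to handle the two assertions separately, working locally in conformal curvature-line (i.e., isothermic) parameters $(u,v)$. For the claim that every maximal surface is isothermic, I would start from conformal parameters (which exist since the induced metric on a spacelike immersion is Riemannian). In those parameters the trace-free part of the second fundamental form defines a quadratic differential (the Hopf differential), and the Codazzi equations together with $H=0$ force it to be holomorphic in the sense appropriate to $\lor$. A rotation of the conformal parameter to align with its eigendirections then yields conformal curvature-line parameters. Equivalently, one can invoke the Weierstra\ss-type representation for maximal surfaces \cite{pember2020}, which directly produces isothermic parameters. I would treat this step as background and skip the analytic details.

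For the forward direction of the equivalence, assume $f$ is maximal, so $\kappa_1 + \kappa_2 = 0$. In isothermic curvature-line coordinates, the Mainardi--Codazzi equations take the form
\begin{align*}
\partial_v(\kappa_1 E) = H\, E_v, \qquad \partial_u(\kappa_2 E) = H\, E_u.
\end{align*}
With $H = 0$, $\kappa_1 E$ is independent of $v$ while $\kappa_2 E = -\kappa_1 E$ is independent of $u$; hence $c := \kappa_1 E$ is a constant. Substituting into \eqref{eq:smChristoffel} gives
\begin{align*}
N_u = \kappa_1 f_u = c\, f^*_u, \qquad N_v = \kappa_2 f_v = -\kappa_1 f_v = c\, f^*_v,
\end{align*}
so $N = c\, f^* + c_0$ for a constant vector $c_0 \in \lor$. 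Since $N$ takes values in $\unip$, the point $f^* = (N - c_0)/c$ lies on a spacelike sphere of radius $1/|c|$. This simultaneously proves the last assertion, that the Christoffel dual is the Gau\ss{} map up to scaling and translation.

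For the converse, assume $f^*$ lies on a spacelike sphere with center $f_0$, so that $\|f^*-f_0\|^2$ is constant. Differentiating in $u$ and $v$ and using \eqref{eq:smChristoffel} gives $\langle f^*-f_0, f_u\rangle = \langle f^*-f_0, f_v\rangle = 0$, so $f^*-f_0$ is proportional to the unit normal: $f^*-f_0 = \mu N$ for a scalar function $\mu$. Constancy of $\|f^*-f_0\|^2 = \mu^2 \|N\|^2$ together with the continuity of $\mu$ then forces $\mu$ to be constant. Reinserting into \eqref{eq:smChristoffel} yields $\mu\kappa_1 = 1/E$ and $\mu\kappa_2 = -1/E$, whence $\kappa_1 + \kappa_2 = 0$, so $f$ is maximal. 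The main obstacle, from my point of view, is really the global existence of isothermic parameters on a maximal surface (the first assertion); once these are in place, the remainder is a short linear computation based on the Codazzi identity and \eqref{eq:smChristoffel}.
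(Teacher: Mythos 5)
The paper does not actually prove Theorem~\ref{thm:smMaximalDual}: it is quoted as a known fact from the smooth theory with a pointer to \cite{pember2020}, so there is no in-paper argument to compare against. Your proof is the standard one (the direct Lorentz analogue of the classical characterization of Euclidean minimal surfaces via the Christoffel dual and the Gauß map), and it is correct. The key computation checks out: in conformal curvature-line parameters the Codazzi equations read $\partial_v(\kappa_1 E)=H E_v$ and $\partial_u(\kappa_2 E)=H E_u$ (the derivation only uses the tangential part of $f_{uv}$, so it is insensitive to the timelike signature of $N$), whence $H=0$ makes $c=\kappa_1E$ constant and $N=c f^*+c_0$; the converse via $\langle f^*-f_0,f_u\rangle=\langle f^*-f_0,f_v\rangle=0$ and the nonvanishing of $\mu$ (forced by $\mu\kappa_1=1/E\neq 0$) is also sound. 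Two minor caveats you should flag if you were to write this out in full: (i) the existence of conformal curvature-line parameters via holomorphy of the Hopf differential only holds away from umbilic points, which for a non-planar maximal surface are isolated — this is the usual sense in which ``maximal implies isothermic'' is meant, and you rightly defer it to the literature just as the paper does; (ii) with the paper's convention that $N$ takes values in $\unip$ one has $\lorsca{N,N}$ constant of fixed (timelike) sign, so ``$f^*$ lies on a spacelike sphere'' should be read with the corresponding Lorentz norm, but this does not affect the argument.
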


Theorem~\ref{thm:smMaximalDual} may be used to derive the Weierstraß representation of maximal surfaces
\cite[Thm~1.1]{kobayashi1983}, based on the observation that the Gauß map $N$ of a maximal surface parametrizes (a part of) the spacelike unit sphere $\unip$ isothermically. Moreover, $N$ corresponds to a holomorphic map $g$ via the inverse of the stereographic projection
\begin{align}
	\sigma: \C \to \unip: z \mapsto  \frac{1}{1-|z|^2} \operatorname{Re} \begin{pmatrix}
		2z \cr -2iz \cr 1+|z|^2
	\end{pmatrix}.
\end{align}
Conversely, any holomorphic map induces the Gauß map of a
maximal surface. We can integrate the Christoffel equations \eqref{eq:smChristoffel}, which recovers the Weierstraß formula
\begin{align}\label{eq:smWFormula}
	f(z) = \operatorname{Re}\int G(\xi) d\xi,
\end{align}
with
\begin{align}
	G(z) = \frac{1}{g'(z)} \begin{pmatrix}
		1+g^2(z) \cr i(1-g^2(z)) \cr 2g(z)
	\end{pmatrix}.
\end{align}
Here the surface $f$ is parametrized using the complex coordinate $z=u+iv$. 
Given the holomorphic function $g$, Equation~\eqref{eq:smWFormula} is a conformal curvature line parametrization of $f$. 
All maximal surfaces can be parametrized in this way. 

Equation~\eqref{eq:smWFormula} can also be used to define the associated family of a minimal surface: given a maximal surface $f$ parametrized as in Equation~\eqref{eq:smWFormula}, we define its \emph{associated family} $f^\varphi$ with $\varphi \in \S^1$ via
\begin{align}\label{eq:smAssocFam}
	f^\varphi(z) = \operatorname{Re}
	\int G(\xi)e^{i\varphi} d\xi.
\end{align}
All members of the associated family of a maximal surface are maximal themselves. Let us give a geometric interpretation of Equation~\eqref{eq:smAssocFam}. The directions of coordinate lines on the maximal surface parametrized by $f=f^0$ are given by 
$f_u=\operatorname{Re} G$ and $f_v=\operatorname{Im} G$. Moreover, $f$ is a conformal parametrization, 
which means that $f_u$ and $f_v$ are orthogonal and have equal lengths. The coordinate directions of $f^\varphi$ 
are then given by rotating $\operatorname{Re} G$ and $\operatorname{Im} G$ by the angle $\varphi$ in the tangent 
plane. Thus, all surfaces in the associated family are parametrized by conformal coordinates, but not necessarily along curvature 
lines. For $\varphi=\frac{\pi}{2}$ we obtain conformal asymptotic coordinates. 


\section{Isothermic congruences} \label{sec:isocc}

Let us discuss the basic geometric objects from the literature that we will need in the remainder of the paper. We consider the bipartition of $\Z^2$ into black vertices $\Zb^2$ and white vertices $\Zw^2$, that is
\begin{align}
	\Zb^2 &= \{z \in \Z^2\ | \ z_1 + z_2 \in 2 \Z  \}, & \Zw^2 &= \{z \in \Z^2\ | \ z_1 + z_2 \in 2 \Z + 1 \}.
\end{align}
Note that $\Zb^2 \simeq \Zw^2 \simeq \Z^2$, therefore we may also speak of black (white) edges which are edges of $\Zb^2$ ($\Zw^2$). There is a natural correspondence between the black edges $E(\Zb^2)$ and the white edges $E(\Zw^2)$ or the faces $F(\Z^2)$. Similarly, black vertices $\Zb^2$ and white faces $F(\Zw^2)$ correspond.

In Lorentz space $\lor = \R^{2,1}$, we denote by $\osp_-(\lor)$ the set of oriented timelike spheres, by $\sp_0(\lor)$ the set of null-spheres, and by $\oli_0(\lor)$ the set of oriented isotropic lines. See \cite[Section~3]{admpsiso} for a quick introduction to Lorentz sphere geometry.

\begin{definition} \label{def:nullcongruence}
	A \emph{null congruence} $c$ is the triple of maps
	\begin{align}
		\ccw&: \Zw^2 \rightarrow \osp_-(\lor), & 
		\ccb&: \Zb^2 \rightarrow \sp_0(\lor), & 
		\ccf&: F(\Z^2) \rightarrow \oli_0(\lor),
	\end{align}
  such that the oriented sphere $\cc(v)$ is in oriented contact with the oriented isotropic line $\ccf(f)$ whenever $v$ and $f$ are incident, see Figure~\ref{fig:nullcongruence}.
	The \emph{center net} (of a null congruence $\cc$) is the map $\ccm: \Z^2 \rightarrow \lor$, such that $\ccm(v)$ is the center of the sphere $\cc(v)$. 
\end{definition}
Null congruences were introduced as a special case of the so called \emph{contact congruences} in \cite{admpsiso}. Note that we wrote that $c$ is the triple of maps $\ccw,\ccb,\ccf$, although technically we treat $c$ as the union of the maps $\ccw \cup \ccb \cup \ccf$ and consequently $\ccw,\ccb,\ccf$ as restrictions of $\cc$.

Next, we want to introduce a special class of null congruences, for which we need the classical notion of a (discrete) conjugate net, see \cite{sauerqnet, dsqnet, bsddgbook}.

\begin{figure}
	\centering
	\includegraphics[height=.45\textwidth]{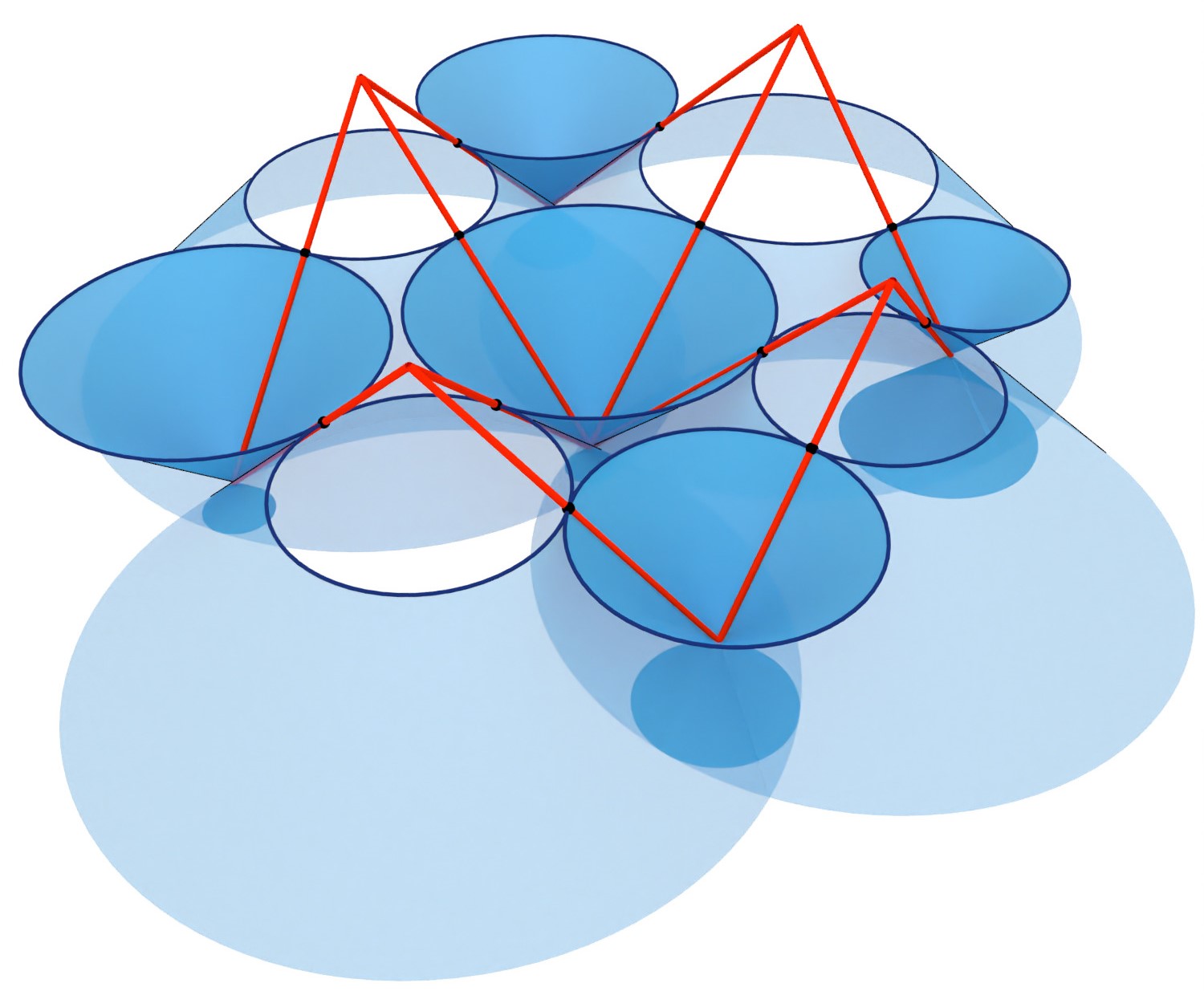}
	\includegraphics[height=.45\textwidth]{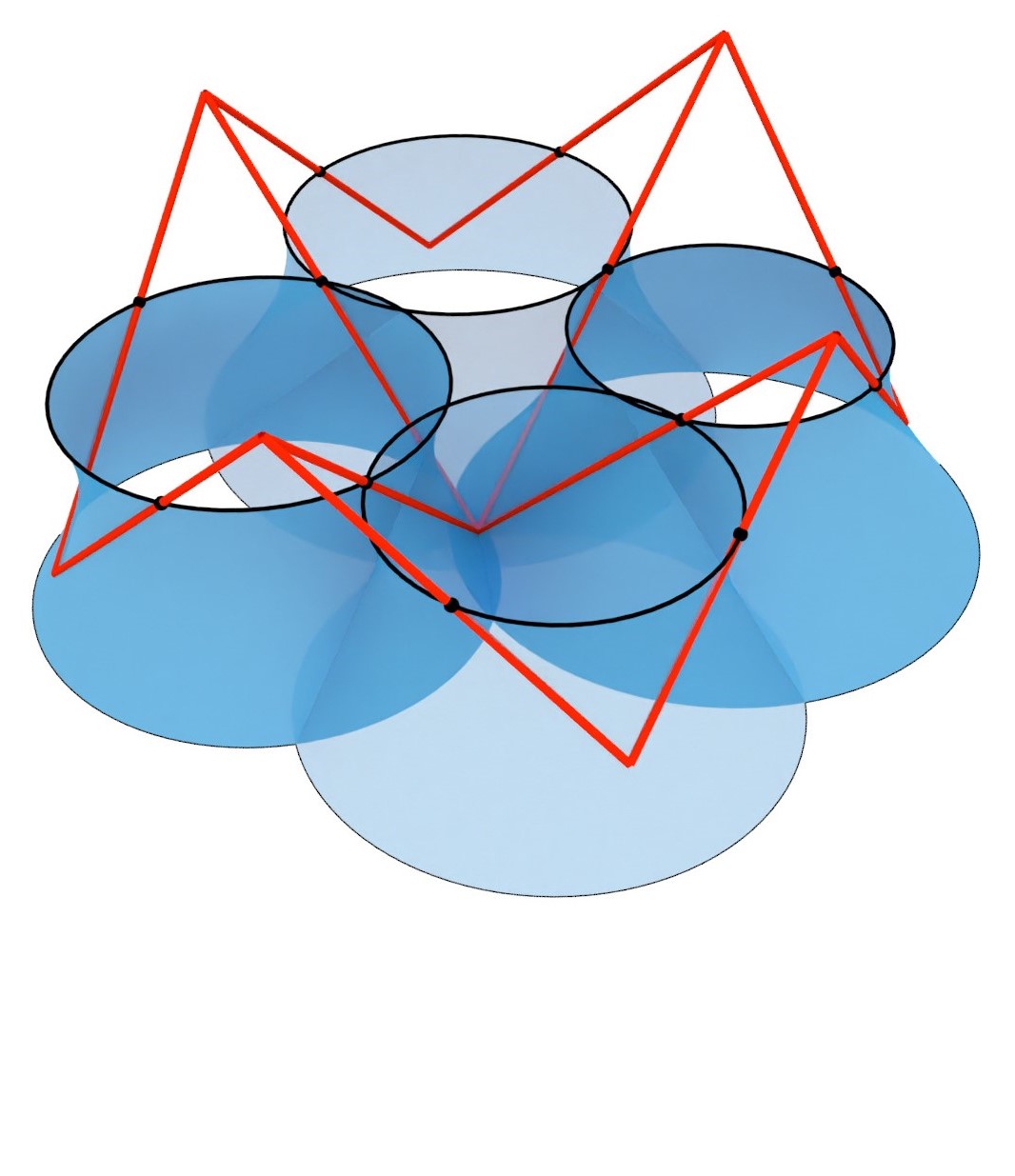}
	\caption{The null-spheres $\ccb$ (left) and time-like spheres $\ccw$ (right) of a null congruence $\cc$ as well as the isotropic lines $\ccf$ (red).}
	\label{fig:nullcongruence}
\end{figure}

\begin{definition}\label{def:conjugatenet}
	A map $x: \Z^2 \rightarrow \R^n$ is a \emph{conjugate net} if the image of the vertices of each face $f\in F(\Z^2)$ are contained in a plane. We denote this plane by $x(f)$. 
\end{definition}

Moreover, we call a conjugate net spacelike if all the face-planes are spacelike.

\begin{definition}\label{def:isothermiccongruence}
	A \emph{(spacelike Lorentz) isothermic congruence} is a null congruence $\cc$ such that $\ccmw$ is a spacelike conjugate net.
\end{definition}

Isothermic congruences were introduced in \cite{admpsiso}, 
and are of particular use because they are in correspondence with the following known object from discrete differential geometry. Let us denote by $\ci_+(\lor)$ the set of spacelike circles (which are intersections of spheres with spacelike planes).

\begin{definition}\label{def:sisothermic}
	A \emph{(spacelike Lorentz) S-isothermic net} $h$ is a collection of maps 
	\begin{align}
		\isow&: \Zw^2 \rightarrow \osp_-(\lor), &
		\isob&: \Zb^2 \rightarrow \ci_+(\lor), &
		\isof&: F(\Z^2) \rightarrow \lor,
	\end{align}
  such that for all faces $f = (w,b,w',b') \in F(\Z^2)$ the two spheres $\isow(w)$, $\isow(w')$ are in oriented contact and the two circles $\isob(b)$, $\isob(b')$ intersect the two spheres orthogonally in the point $\isof(f)$, see Figure~\ref{fig:isothermic} (left).
	The \emph{center net} of an S-isothermic net $\iso$ is the map $\isom: \Z^2 \rightarrow \lor$, such that $\isom(v)$ is the center of $\iso(v)$ for all $v\in \Z^2$.
\end{definition}

The original definition of S-isothermic nets was given for Euclidean space \cite{bpisoint}, which was subsequently translated to Lorentz space in \cite{bhscmc,admpsiso}. The correspondence to isothermic congruences is given in the next theorem, for a proof see \cite{admpsiso}.

\begin{theorem}\label{th:isocongruencetoisonet}
	Every isothermic congruence $\cc$ defines a unique S-isothermic net $\iso$ such that $\ccw = \isow$, vice versa every S-isothermic $\iso$ net defines two isothermic congruences $\cc^1,\cc^2$ such that $\isow = \ccw^1 = \ccw^2$.
\end{theorem}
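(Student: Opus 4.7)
The strategy is to extract the S-isothermic structure from the congruence in the forward direction (essentially bookkeeping once the key geometric lemma is in place), and to reconstruct the missing null-congruence data in the reverse direction (where the 2-fold ambiguity appears).

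\textbf{Forward direction ($\cc\to\iso$).} I would begin by setting $\isow:=\ccw$, which is forced by the theorem statement. For each face $f=(w,b,w',b')$, property (iii) of null congruences recalled in the introduction says that the two timelike spheres $\ccw(w)$ and $\ccw(w')$ are in oriented contact, with touching point lying on the isotropic line $\ccf(f)$; I define $\isof(f)$ to be this touching point. Finally, for each black vertex $b$ with incident faces $f_1,\dots,f_4$ and surrounding white vertices $w_1,\dots,w_4$, the centers $\ccmw(w_i)$ lie in a common spacelike plane $\pi_b$ by the isothermic/conjugate-net hypothesis, and each touching point $\isof(f_i)$ lies on the edge of this planar quadrilateral joining $\ccmw(w_i)$ and $\ccmw(w_{i+1})$. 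The crucial step is to show that the four $\isof(f_i)$ are concyclic in $\pi_b$; once this is done, I define $\isob(b)$ as the resulting circle. Spacelikeness of $\isob(b)$ is automatic from $\pi_b$ being spacelike, and orthogonality with each $\ccw(w_i)$ at $\isof(f_i)$ follows because $\pi_b$ contains the radius of $\ccw(w_i)$ at that point.

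\textbf{Reverse direction ($\iso\to\cc^1,\cc^2$).} Set $\ccw:=\isow$. At each face $f$ the isotropic line $\ccf(f)$ must be tangent to both $\isow(w)$ and $\isow(w')$, hence must pass through $\isof(f)$ and lie in their common tangent plane $T_f$. Since the segment of centers is spacelike (a consequence of $\pi_b$ being spacelike), $T_f$ has Lorentz signature $(1,1)$ and contains exactly two isotropic directions through $\isof(f)$. I would argue that imposing the null-congruence condition that the four isotropic lines around each black vertex $b$ be concurrent (so that an apex for $\ccb(b)$ may be placed) rigidifies this per-face binary choice into a single global binary choice: propagating an initial choice at one face around each vertex determines every subsequent choice, and the two overall branches yield $\cc^1$ and $\cc^2$. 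The null-spheres $\ccb^j(b)$ are then defined as those with apex at the common concurrence point of the four chosen isotropic lines.

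\textbf{Main obstacle.} The delicate step is the concyclicity of the four touching points around a black vertex, and the matching claim that exactly two consistent global branches exist in the reverse direction. Both should be by-products of a single local identity at each black vertex: because the apex $p_b$ of the desired null-sphere $\ccb(b)$ lies on all four isotropic lines, it has equal Lorentzian tangent length to each of the surrounding white spheres, and this power-of-a-point identity (combined with the planarity supplied by the conjugate-net condition) should be exactly the Ptolemy/inversive-distance relation that encodes concyclicity and simultaneously guarantees that the concurrence of isotropic lines has a solution for precisely two sign choices. Once this local lemma is in hand, the rest of Theorem~\ref{th:isocongruencetoisonet} is a matter of assembling the pieces and verifying uniqueness, which is automatic in the forward direction since $\isof$ and $\isob$ are forced by $\isow=\ccw$.
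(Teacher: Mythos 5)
Note first that the paper does not prove Theorem~\ref{th:isocongruencetoisonet} here at all: it is imported from the companion paper \cite{admpsiso}, and the only proof content offered is the sentence immediately after the theorem, namely that $\isob(b)$ is obtained by intersecting the null-sphere $\ccb(b)$ with the spacelike plane spanned by the adjacent centers of $\ccw$. Your architecture (keep the white spheres, read off contact points, build circles at black vertices; in reverse, recover isotropic lines and apices up to one global binary choice) is the intended one, but the two steps you yourself flag as the crux are not actually established, and one justification is wrong as written. For concyclicity you propose an uncarried-out Ptolemy/power-of-a-point identity, when it is immediate from data you already have: the isotropic line $\ccf(f_i)$ passes through the apex of $\ccb(b)$ and through the touching point $\isof(f_i)$, so $\isof(f_i)$ lies on the light cone $\ccb(b)$; it also lies on the segment of centers, hence in the spacelike plane $\pi_b$; and the intersection of a light cone with a spacelike plane is a single circle (centered at the foot of the apex, of radius equal to the apex's distance to the plane). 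Equivalently, since $|\ccmw(w_i)-\ccmw(w_{i+1})|$ equals the sum of the radii, the tangent lengths from each vertex of the planar center quad agree, which is exactly the inscribed-circle (touching-coins) condition. Relatedly, your orthogonality argument --- ``$\pi_b$ contains the radius of $\ccw(w_i)$ at that point'' --- does not suffice: a circle in $\pi_b$ through $\isof(f_i)$ meets the sphere orthogonally if and only if its tangent there is the radial direction, i.e.\ if and only if the circle is tangent to the edge of centers at $\isof(f_i)$. That is the inscribed-circle property, not a consequence of the plane merely containing the radius.

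In the reverse direction, the assertion that the per-face choice of isotropic direction ``rigidifies into a single global binary choice'' is precisely what needs proof, and you supply no mechanism. The clean count is at black vertices, not faces: a spacelike circle of radius $r$ lies on exactly two light cones, whose apices sit on the circle's timelike axis at distance $r$ on either side of its plane. Choosing one of the two apices at a single black vertex determines the generator through that apex and each adjacent contact point $\isof(f_i)$, hence fixes the isotropic line at each adjacent face; one must then verify that each such line selects exactly one of the two admissible apices at the neighbouring black vertex and that this propagation is consistent around faces and white vertices. Without that consistency check, ``exactly two'' congruences is not justified --- a priori the number of consistent branches could be $0$ or could grow with the domain. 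So the proposal identifies the right objects and the right bijection, but the local lemma it defers to (your ``main obstacle'') is exactly the content of the theorem, and the sketch offered for it would need to be replaced by the cone-section/incircle argument above.
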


The circles $\isob$ are obtained from the spheres $\ccb$ by intersecting each null-sphere $\ccb(b)$ with the spacelike plane spanned by the adjacent centers of $\ccw$.

\section{Isothermic incircular nets}  \label{sec:isocp}

\begin{figure}
	\centering
	\includegraphics[height=.4\textwidth]{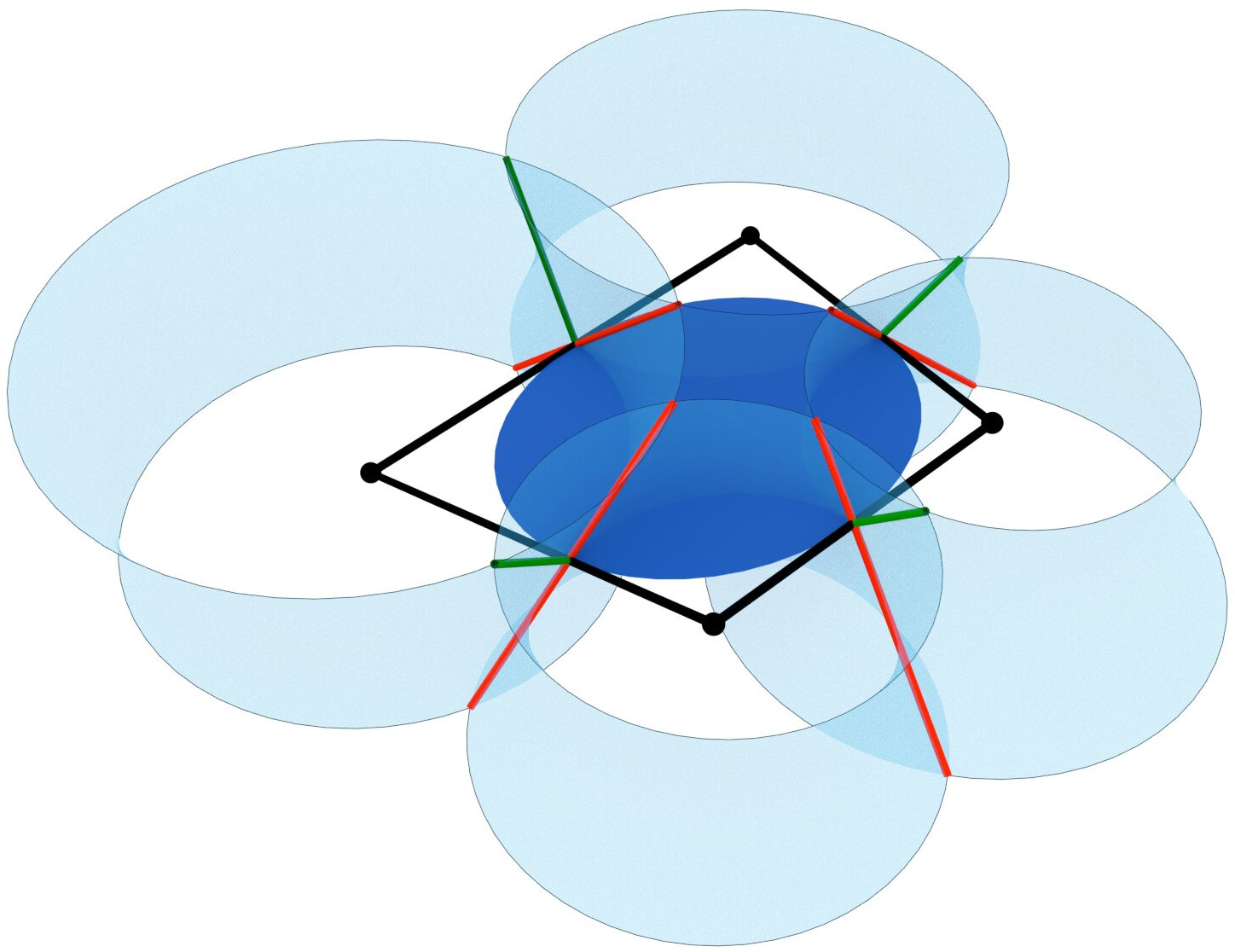}
	\hspace{5mm}
	\includegraphics[height=.41\textwidth]{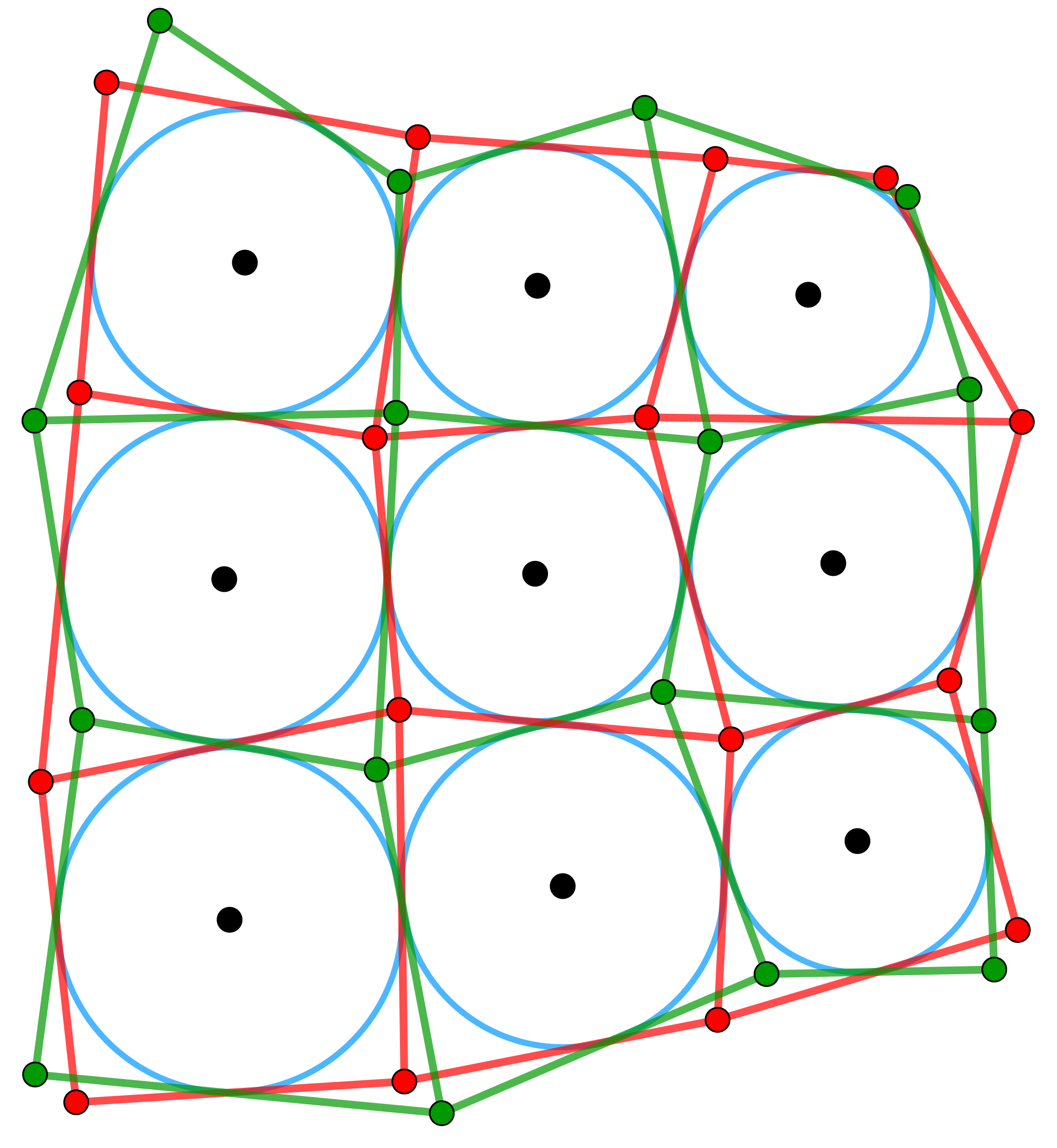}
	\caption{Left: a circle of $\isob$ (blue) of an S-isothermic net $\iso$ and the four adjacent timelike Lorentz spheres of $\isow$ (teal), as well as the isotropic lines of $\ccf^1$ (green) and $\ccf^2$ (red) of the two corresponding isothermic congruences $\cc^1$, $\cc^2$.
		Right: An isothermic incircular net $p^1$ (green) and the second incircular net $p^2$ (green) given by the other tangents.
	}
	\label{fig:isothermic}
\end{figure}

In \cite{admpsiso}, we showed how null congruences correspond to incircular nets and how we identify the special case of incircular nets that correspond to isothermic congruences. Let us briefly revisit the definitions and theorems that we use in the remainder of the paper.

We begin with incircular nets, which are essentially quad nets in $\eucl$ such that every quad has an incircle, we formalize this in the following definition.
\begin{definition} \label{def:incircular}
	An \emph{incircular net} $\cp$ is a triple of maps
	\begin{align}
		\cpw&: \Zw^2 \rightarrow \oci(\eucl), & 
		\cpb&: \Zb^2 \rightarrow \eucl, & 
		\cpf&: F(\Z^2) \rightarrow \oli(\eucl),
	\end{align}
	such that the oriented line $\cpf(f)$ contains $\cpb(b)$ and is in oriented contact with $\cpw(w)$ whenever $w\in \Zw^2$, $b\in\Zb^2$ are incident with $f$.
\end{definition}

Each null congruence $\cc$ defines an incircular net as follows: For each face $f$ the line $\cpf(f)$ is the orthogonal projection of $\ccf(f)$, $\cpb(b)$ is the orthogonal projection of the apex of $\ccb(b)$, and $\cpw(w)$ is the projection of the smallest circle of $\ccw(w)$ (the contour of $\ccw(w)$ under orthogonal projection). Every incircular net comes from a null congruence in this way, see \cite{admpsiso} for details.

The quantities that define the statistics of the Ising model from an incircular net are the \emph{$X$-va\-ri\-a\-bles} \cite{chelkaksembeddings}.

\begin{definition}\label{def:xvar}
	Let us identify $\eucl \simeq \C$ and consider an incircular net $\cp$. The $X$-variables $X_\circ: \Zw^2\rightarrow \R$ are defined as
	\begin{align}	
		X_\circ(w) = -\frac{(\cpb(b_1)-\cpmw(w))(\cpb(b_2)-\cpmw(w))}{(\cpb(b_3)-\cpmw(w))(\cpb(b_4)-\cpmw(w))},
	\end{align}	
	for all $w\in \Zw^2$ and $b_1,b_2,b_3,b_4$ adjacent to $w$ in counterclockwise order, with $b_1 = w + (1,0)$.
\end{definition}
Note that there is a general definition of $X$-variables for conical nets, in which case the $X$-variables are defined at all vertices of $\Z^2$. In the special case of incircular nets, it suffices to consider the $X$-variables at white vertices. The $X$-variables are real because $\cp$ is a special case of a conical net, see \cite[Remark~11.2]{admpsiso}.

Let $\cc$ be a null congruence that projects to an incircular net $\cp$. Then the $X$-variables of $\cp$ may also be read off of $\cc$ as expressed in the next lemma.

\begin{lemma} \label{lem:xcyclo}
	Let $\cc$ be a null congruence that projects to an incircular net $\cp$. Then the $X$-variables of $\cp$ satisfy
	\begin{align}\label{eq:xvarLaguerre}
		X_\circ(w) = \frac{|\ccm(b_1)-\ccm(b_3)|^2_\lor}{|\ccm(b_2)-\ccm(b_4)|^2_\lor},
	\end{align}
	for all $w\in \Zw^2$ and $b_1,b_2,b_3,b_4$ adjacent to $w$ in counterclockwise order, with $b_1 = w + (1,0)$.
\end{lemma}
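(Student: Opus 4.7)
The approach is a direct calculation that exploits the doubly ruled structure of the timelike Lorentz sphere $\ccw(w)$. First I would translate so that $\ccm(w)=0$. The four apices $p_k := \ccm(b_k)$ then all satisfy $|p_k|^2_\lor = r_w^2$, and consecutive apices $p_k, p_{k+1}$ are null-separated because the line $\ccf(f)$ of the face containing $w,b_k,b_{k+1}$ is an isotropic ruling of $\ccw(w)$ that passes through both. Through each apex there is exactly one ruling from each of the two null families on the hyperboloid, and the two null-edges at $p_k$ (coming from the two faces sharing the quad-graph edge $\{b_k,w\}$) necessarily lie in different families. This forces the families around $w$ to alternate, so the four apices are parametrised by a $2\times 2$ grid of ruling parameters: there are exactly four distinct values $A_1, A_2, B_1, B_2 \in \S^1$, and $(\alpha_k, \beta_k)$ takes each of the four combinations $(A_i, B_j)$, with the diagonal pairs $(b_1,b_3)$ and $(b_2,b_4)$ corresponding to the two pairs of opposite corners of the grid.

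Writing $p_k = (z_k, t_k) \in \C \times \R$, the classical null-ruling parametrisation of the hyperboloid yields
\begin{align}
z_k = \cpb(b_k) - \cpmw(w) = \frac{2 r_w\, \alpha_k \beta_k}{\alpha_k + \beta_k}, \qquad t_k = \frac{i r_w\,(\alpha_k - \beta_k)}{\alpha_k + \beta_k},
\end{align}
with $|\alpha_k| = |\beta_k| = 1$. Substituting the first formula into the definition of $X_\circ(w)$ gives a rational expression in $A_1, A_2, B_1, B_2$. On the Lorentz side I would expand $|p_i - p_j|^2_\lor = 2 r_w^2 - 2\langle p_i, p_j\rangle_\lor$ using $\langle p_i, p_j\rangle_\lor = \operatorname{Re}(z_i \bar z_j) - t_i t_j$. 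Applying the algebraic identity
\begin{align}
(A + B)(A' + B') - (A - B)(A' - B') - 2(AB + A'B') = -2(A - A')(B - B'),
\end{align}
each diagonal squared norm factors as a constant multiple of $(A_1 - A_2)(B_1 - B_2)$ divided by the corresponding pair of sums $(\alpha_i + \beta_j)$. In the quotient $|p_1-p_3|^2_\lor/|p_2-p_4|^2_\lor$ the common factor $(A_1-A_2)(B_1-B_2)$ cancels, and what remains coincides with $-z_1 z_2/(z_3 z_4)$ after using $|\alpha_k|=|\beta_k|=1$.

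The main obstacle is not geometric but combinatorial and algebraic: one must keep track of which grid corner corresponds to which apex under the alternating family structure, and handle signs carefully, since the Lorentz form can take either sign on each diagonal and these signs must line up with the explicit minus sign in the definition of $X_\circ$. A more conceptual alternative would be to observe that both sides of the identity are Möbius invariants of four null-connected points on a timelike Lorentz sphere, reduce to a convenient normal form by a Lorentz Möbius transformation, and verify the identity there; however, once the $2\times 2$-grid parametrisation is in place the direct computation sketched above already goes through essentially in a few lines.
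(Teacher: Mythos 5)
The paper itself states this lemma without proof (it is recalled from the companion paper), so there is no in‑house argument to compare against; judged on its own terms, your setup is sound. The apices $p_k=\ccm(b_k)-\ccm(w)$ do lie on the one‑sheeted hyperboloid $\ccw(w)$, consecutive apices are joined by rulings which must alternate between the two null families, the grid parametrisation $z_k=2r_w\alpha_k\beta_k/(\alpha_k+\beta_k)$, $t_k=ir_w(\alpha_k-\beta_k)/(\alpha_k+\beta_k)$ is a correct null‑ruling parametrisation (the fixed‑$\alpha$ curves are straight isotropic lines projecting to tangent lines of the waist circle, which is why the projection is automatically an incircular configuration), and your factorisation identity is valid: it gives
\begin{align}
|p_1-p_3|^2_{\lor}=\frac{4r_w^2(A_1-A_2)(B_1-B_2)}{(A_1+B_2)(A_2+B_1)},\qquad
|p_2-p_4|^2_{\lor}=\frac{4r_w^2(A_1-A_2)(B_2-B_1)}{(A_1+B_1)(A_2+B_2)}.
\end{align}

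The gap is in the very last step. The quotient of these two expressions is $-\tfrac{(A_1+B_1)(A_2+B_2)}{(A_1+B_2)(A_2+B_1)}$, and this equals $-\tfrac{z_1z_3}{z_2z_4}$, the combination in which the numerator carries the two points of one \emph{diagonal} of the circumscribed quadrilateral — it does \emph{not} equal $-\tfrac{z_1z_2}{z_3z_4}$ as you claim; the two differ by the nontrivial factor $(z_2/z_3)^2=\bigl(A_1(A_2+B_1)/(A_2(A_1+B_1))\bigr)^2$. A concrete check: for the square circumscribed about the unit circle, $\cpb(b_1),\dots,\cpb(b_4)=1-i,\,1+i,\,-1+i,\,-1-i$ with apex heights $+1,-1,+1,-1$, the Lorentz ratio is $8/8=1=-\tfrac{z_1z_3}{z_2z_4}$, whereas $-\tfrac{z_1z_2}{z_3z_4}=-1$. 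Worse, for a generic tangential quad the adjacent‑pair combination is not even real (its argument is $\pi+\phi_1-\phi_3$ in terms of the arguments $\phi_i$ of the tangent points), while the diagonal‑pair combination always is — consistent with the paper's assertion that $X_\circ$ is real and with the manifestly diagonal pairing on the right of \eqref{eq:xvarLaguerre}. So the identity your computation actually proves is the one with opposite neighbours paired; you must either correct this bookkeeping or make explicit that you are reading the index convention of Definition~\ref{def:xvar} in that way. This is exactly the ``combinatorial and algebraic'' obstacle you flagged, but as written the proof does not close.
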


Next, let us characterize which incircular nets are the projections of isothermic congruences. Note that the set of oriented lines that are tangent to two (different) oriented circles in $\eucl$ has at most two elements. In particular, if the two circles are disjoint as disks then there are exactly two such lines. Thus given one tangent, we may speak of the \emph{other} tangent. If there is only one tangent, then the other tangent refers to the same tangent.

\begin{definition}\label{def:isocp}
	An \emph{isothermic incircular net} is an incircular net $\cp^1$ such that there is a second incircular net $\cp^2$ that consists of the other tangents, see Figure~\ref{fig:isothermic} (right).
\end{definition}

By definition, $\cp^2$ is also an isothermic incircular net which has the same incircles as $\cp^1$.


\section{Koebe congruences} \label{sec:koebe}

We briefly revisit circle patterns in the hyperbolic plane. For our purposes the \emph{hyperboloid model} of the hyperbolic plane is particularly practical. Let $\unip$ be the (upper half of the) spacelike unit-sphere in $\lor$. We identify the hyperbolic plane $\mathcal H$ with $\unip$.

\begin{definition}\label{def:hocp}
	A \emph{hyperbolic orthogonal circle pattern} $\Cp$ is a pair of maps
	\begin{align}
		\Cp: \Z^2 &\rightarrow \ci_+(\unip), & \Cpf: F(\Z^2) &\rightarrow \unip,
	\end{align}
	such that 
	\begin{enumerate}
		\item around each face $f$ the four circles intersect in the point $\Cp(f)$, and
		\item adjacent circles intersect orthogonally.\qedhere
	\end{enumerate}
\end{definition}

Each circle $\Cp(v)$ in the hyperbolic plane corresponds to the intersection of $\unip$ with a plane, and each such plane is in bijection with the polar point $\Cp^\perp(v)$ outside of $\unip$.
The point $\Cp^\perp(v)$ is the center of a (unique) timelike sphere $S(v)$ that intersects $\unip$ orthogonally in the circle $\Cp(v)$.
This allows us to define an S-isothermic net $\iso$ by
\begin{align}
	\isow(w) &= S(w), & \isob(b) &= \Cp(b), & \isof(f) = \Cpf(f),
\end{align}
We may choose the orientation of one initial sphere of $h$ freely, which then determines the orientations of all other spheres.
This motivates the following definition.

\begin{definition}
  \label{defn:koebe-isothermic}
	A \emph{Koebe isothermic net} is an S-isothermic net $\iso$ such that every circle of $\isob$ is contained in the upper half of the unit-sphere $\unip$, see Figure~\ref{fig:maximal} (left). A \emph{Koebe congruence} is an isothermic congruence that corresponds to a Koebe isothermic net via Theorem~\ref{th:isocongruencetoisonet}.
\end{definition}

It is not hard to see that Koebe isothermic nets are exactly those isothermic nets that correspond to hyperbolic orthogonal circle patterns. An advantage of hyperbolic orthogonal circle patterns (and therefore Koebe isothermic nets) is that they can be constructed from boundary data as the minimizer of a certain convex functional, see \cite{bsvariationalcp}. In fact, existence and uniqueness is guaranteed, and this also works for combinatorics that are more general than $\Z^2$.

\begin{remark}
  Let $\iso$ be a Koebe isothermic net. At each face $f$ there is a unique tangent $T(f)$ to the two adjacent circles of $\isob$, which is also a tangent to $\unip$. Around a white vertex $w$, the adjacent tangents intersect in the point $\isomw(w)$. Viewed in this way, as an edge-tangent conjugate net, $\isomw$ is called a \emph{Koebe polyhedron} \cite{bsvariationalcp, bhsminimal}. Note that after a projective transformation, $\isomw$ is still a Koebe polyhedron, edge-tangent to a new quadric which is the image of $\unip$. In particular, there are projective transformations that map $\unip$ to $\S^2 \subset \R^3$. There are also projective transformations that fix $\unip$ or fix $\S^2$, which are the corresponding Möbius transformations of $\unip$ or $\S^2$, respectively. In particular, this enables us to start with a Euclidean Koebe polyhedron and map it to a Lorentz Koebe polyhedron that is edge tangent to only the upper half of $\unip$. Thus, the difference between treating Lorentz maximal surfaces and Euclidean minimal surfaces lies only in the treatment of the boundary conditions, which are given in the hyperbolic plane $\mathcal H^2$ or the Euclidean plane $\eucl$, respectively.
\end{remark}

\begin{remark} \label{rem:koebecp}
	Which isothermic incircular nets are projections of Koebe congruences? Recall that isothermic incircular nets come in pairs $\cp^1,\cp^2$ (Definition~\ref{def:isocp}) which share the same incircles. For every black vertex $b\in\Zb^2$ we may define the line $\ell(b)$ spanned by $\cpb^1(b)$ and $\cpb^2(b)$. An isothermic incircular net comes from a Koebe congruence if and only if all the lines $\ell(b)$ intersect in a point. This can be shown by observing that each line $\ell(b)$ is the projection of the axis of the corresponding circle $\isob$ in the Koebe isothermic net $\iso$. If the axes all intersect in a point, this point is the center of the unit-sphere $\unip$. Also note that if all the lines are parallel or are all points (in the degenerate case that $\cp^1=\cp^2$), then the circles of $\iso$ are all contained in a plane. Thus $\iso$ is essentially flat and is not a Koebe congruence.
\end{remark}

\section{Christoffel dual} \label{sec:christoffel}

\begin{figure}
	\centering
	\begin{tikzpicture}[line cap=round,line join=round,>=triangle 45,x=1.0cm,y=1.0cm,scale=1.5]
		\definecolor{qqzzqq}{rgb}{0.5,0.25,0.1}
		\definecolor{ttttff}{rgb}{0.2,0.2,1.}
		\clip(-6.49007331469685,-1.6167843497060181) rectangle (1.6,2.0010235341768725);
		\draw [line width=1.5pt,blue!80!black] (-4.24,0.02) circle (0.9987414173774751cm);
		\draw [line width=1.5pt,blue!80!black] (-0.42746692907739503,-0.12274082086700801) circle (1.001260168648888cm);
		\draw [line width=1.5pt] (-4.44273807014855,-1.2625517400125128)-- (-3.3044103136325047,-0.5856276320078151);
		\draw [line width=1.5pt] (-3.3044103136325047,-0.5856276320078151)-- (-3.0921841347984063,1.2569796075931026);
		\draw [line width=1.5pt] (-3.0921841347984063,1.2569796075931026)-- (-6.142269373424083,0.6722414186572291);
		\draw [line width=1.5pt] (-6.142269373424083,0.6722414186572291)-- (-4.44273807014855,-1.2625517400125128);
		\draw [line width=1.5pt] (-1.9750422636041638,0.12189058615006641)-- (-0.8016207773583474,-1.213964693154792);
		\draw [line width=1.5pt] (-0.8016207773583474,-1.213964693154792)-- (0.4830962232332596,-0.967668923217702);
		\draw [line width=1.5pt] (0.4830962232332596,-0.967668923217702)-- (0.7985574922498841,1.771254339204683);
		\draw [line width=1.5pt] (0.7985574922498841,1.771254339204683)-- (-1.9750422636041638,0.12189058615006641);
		\draw [line width=1.5pt,color=qqzzqq] (-4.428046295555406,1.0008786925573707)-- (-4.24,0.02);
		\draw [line width=1.5pt,color=qqzzqq] (-4.24,0.02)-- (-4.990362066814596,-0.6391215270880599);
		\draw [line width=1.5pt,color=qqzzqq] (-3.7295229457086774,-0.8384273969458466)-- (-4.24,0.02);
		\draw [line width=1.5pt,color=qqzzqq] (-4.24,0.02)-- (-3.2478179286600612,-0.09427666470787437);
		\draw [line width=1.5pt,color=qqzzqq] (-1.1797213529895585,-0.7835246032250336)-- (-0.42746692907739503,-0.12274082086700801);
		\draw [line width=1.5pt,color=qqzzqq] (-0.42746692907739503,-0.12274082086700801)-- (-0.9392313683786432,0.7378514658689811);
		\draw [line width=1.5pt,color=qqzzqq] (-0.42746692907739503,-0.12274082086700801)-- (0.5672173513530837,-0.23730568278941422);
		\draw [line width=1.5pt,color=qqzzqq] (-0.42746692907739503,-0.12274082086700801)-- (-0.23894639480736937,-1.1060932162377335);
		\begin{normalsize}
		\draw [fill=ttttff] (-4.24,0.02) circle (2.5pt);
		\draw[color=ttttff] (-3.85,0.2785518624988409) node {$\isomb(b)$};
		\draw [fill=qqzzqq] (-4.428046295555406,1.0008786925573707) circle (2.5pt);
		\draw[color=qqzzqq] (-4.304571623750529,1.2540017697573678) node {$\isof(f_4)$};
		\draw [fill=qqzzqq] (-4.990362066814596,-0.6391215270880599) circle (2.5pt);
		\draw[color=qqzzqq] (-5.418284195106618,-0.733940446301149) node {$\isof(f_1)$};
		\draw [fill=qqzzqq] (-3.7295229457086774,-0.8384273969458466) circle (2.5pt);
		\draw[color=qqzzqq] (-3.2513761257407803,-0.9561948555499271) node {$\isof(f_2)$};
		\draw [fill=qqzzqq] (-3.2478179286600612,-0.09427666470787437) circle (2.5pt);
		\draw[color=qqzzqq] (-2.8,0.10568732197201337) node {$\isof(f_3)$};
		\draw [fill=black] (-6.142269373424083,0.6722414186572291) circle (2.5pt);
		\draw[color=black] (-6.0208695618383175,0.9206201558842003) node {$\isomw(w_1)$};
		\draw [fill=black] (-3.0921841347984063,1.2569796075931026) circle (2.5pt);
		\draw[color=black] (-2.971045168257859,1.5132985805476091) node {$\isomw(w_4)$};
		\draw [fill=black] (-4.44273807014855,-1.2625517400125128) circle (2.5pt);
		\draw[color=black] (-5.05,-1.2895764694230947) node {$\isomw(w_2)$};
		\draw [fill=black] (-3.3044103136325047,-0.5856276320078151) circle (2.5pt);
		\draw[color=black] (-2.7,-0.5) node {$\isomw(w_3)$};
		\draw [fill=ttttff] (-0.42746692907739503,-0.12274082086700801) circle (2.5pt);
		\draw[color=ttttff] (-0.15,0.13038225633298875) node {$\isomb^*(b)$};
		\draw [fill=qqzzqq] (0.5672173513530837,-0.23730568278941422) circle (2.5pt);
		\draw[color=qqzzqq] (1.1,-0.005439882652375758) node {$\isof^*(f_3)$};
		\draw [fill=qqzzqq] (-0.23894639480736937,-1.1060932162377335) circle (2.5pt);
		\draw[color=qqzzqq] (-0.0200005121213013,-1.4) node {$\isof^*(f_4)$};
		\draw [fill=qqzzqq] (-0.9392313683786432,0.7378514658689811) circle (2.5pt);
		\draw[color=qqzzqq] (-0.9337130834773901,1.0811372292305401) node {$\isof^*(f_2)$};
		\draw [fill=qqzzqq] (-1.1797213529895585,-0.7835246032250336) circle (2.5pt);
		\draw[color=qqzzqq] (-1.65,-0.8574151181060258) node {$\isof^*(f_1)$};
		\draw [fill=black] (-1.9750422636041638,0.12189058615006641) circle (2.5pt);
		\draw[color=black] (-2.1561123343456714,0.45) node {$\isomw^*(w_2)$};
		\draw [fill=black] (-0.8016207773583474,-1.213964693154792) circle (2.5pt);
		\draw[color=black] (-1.4,-1.3266188709645577) node {$\isomw^*(w_1)$};
		\draw [fill=black] (0.4830962232332596,-0.967668923217702) circle (2.5pt);
		\draw[color=black] (1.1,-0.8203727165645627) node {$\isomw^*(w_4)$};
		\draw [fill=black] (0.7985574922498841,1.771254339204683) circle (2.5pt);
		\draw[color=black] (0.2,1.8590276616012642) node {$\isomw^*(w_3)$};
		\end{normalsize}
	\end{tikzpicture}
  \caption{Local Christoffel dual $\iso^*$ of an S-isothermic net $\iso$. Corresponding edges (black and brown) are parallel.}
	\label{fig:christoffeldual}
\end{figure}
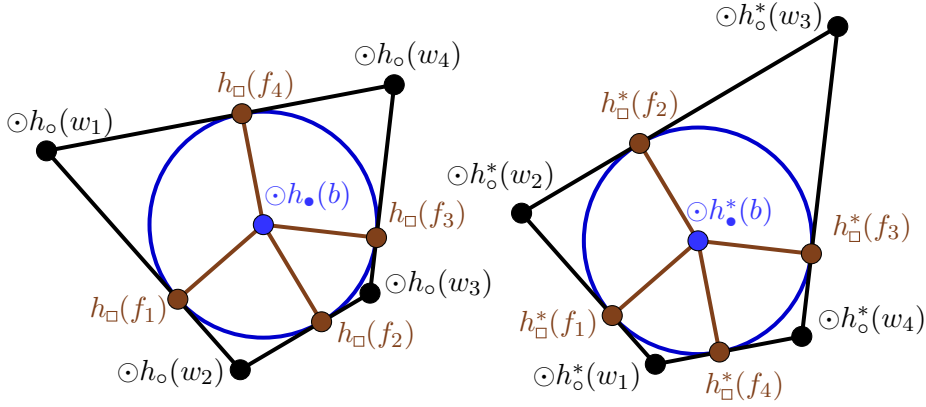

Let us briefly recall the \emph{Christoffel dual} for S-isothermic nets \cite{bhsminimal} with the notation as in \cite{admpsiso}, see also Figure~\ref{fig:christoffeldual}. Let us denote by 
\begin{align}
  \isoc = \isom \cup \isof: \Z^2 \simeq (\Z^2 \cup F(\Z^2)) \rightarrow \lor, \label{eq:combinediso}
\end{align}
the combination of center net $\isom$ and contact points $\isof$ of an S-isothermic net $\iso$, defined on $\Z^2 \cup F(\Z^2)$, which we identify with $\Z^2$ for the time being. In this notation, we define the discrete differential
\begin{align}
	\d \isoc(v,v') = \isoc(v') - \isoc(v),
\end{align}
for all adjacent $v,v' \in \Z^2$. We also define the dual differential
\begin{align}
	\d \isoc^*(v,v') = \pm \frac{\d \isoc(v',v)}{|\d \isoc(v',v)|^2},
\end{align}
for all adjacent $v,v'\in \Z^2$, where the sign is $+$ if the edge $(v,v')$ is horizontal or $-$ if the edge is vertical in $\Z^2$. This dual form is \emph{closed}, which means it may be integrated to obtain a map $\isoc^*: \Z^2 \rightarrow \lor$. In fact, it can be shown that $\isoc^*$ defines a unique S-isothermic net $\iso^*$ (up to translation), which is called the \emph{Christoffel dual} of $\iso$. 

The identification in Theorem~{\protect\ref{th:isocongruencetoisonet}} implies that there is also a sensible definition of the Christoffel dual $\cc^*$ of an isothermic congruence $\cc$.
Consider the combined map
\begin{align}
	\ccc = \ccm \cup \isof: \Z^2 \simeq V(\Z^2) \cup F(\Z^2) \rightarrow \lor, \label{eq:combinedcc}
\end{align}	
that is the combination of $\ccm$ and $\isof$. In comparison with $\isoc$ we substituted $\isomb$ with $\ccmb$, therefore $\ccc$ is not a conjugate net. However, for a face $f$ and an adjacent white vertex $w$ or black vertex $b$ it is possible to integrate 
\begin{align}
	\d \ccc^*(f,w) &= \pm \frac{\d \ccc(f,w)}{R(w)^2}, &
	\d \ccc^*(f,b) &= \pm \frac{\d \ccc(f,b)}{r(b)^2}, \label{eq:nullspheredual}
\end{align}
where $R(w)$ is the radius of $\isow(w)$ and $r(b)$ is the radius of the incircle of the corresponding quad of $\ccmw$ at $b$. The difference to the formula for $\d \isoc^*$ is that we cannot divide by $|\d \ccc(f,b)|$, since that length is zero. Instead, this formula works since the triangle $\isof(f),\isomb(b),\ccmb(b)$ is a right-angled triangle and 
\begin{align}
	|\isof(f) - \isomb(b)|^2 = - |\isomb(b) - \ccmb(b)|^2.
\end{align}
The discrete differential $\d \ccc^*$ is consistent in the sense that the integrated net $\ccc^*$ is indeed the combination of $\ccm^*$ and $\isof^*$.

\section{Maximal surfaces} \label{sec:maximal}

\begin{figure}
	\centering
	\includegraphics[width=.475\textwidth]{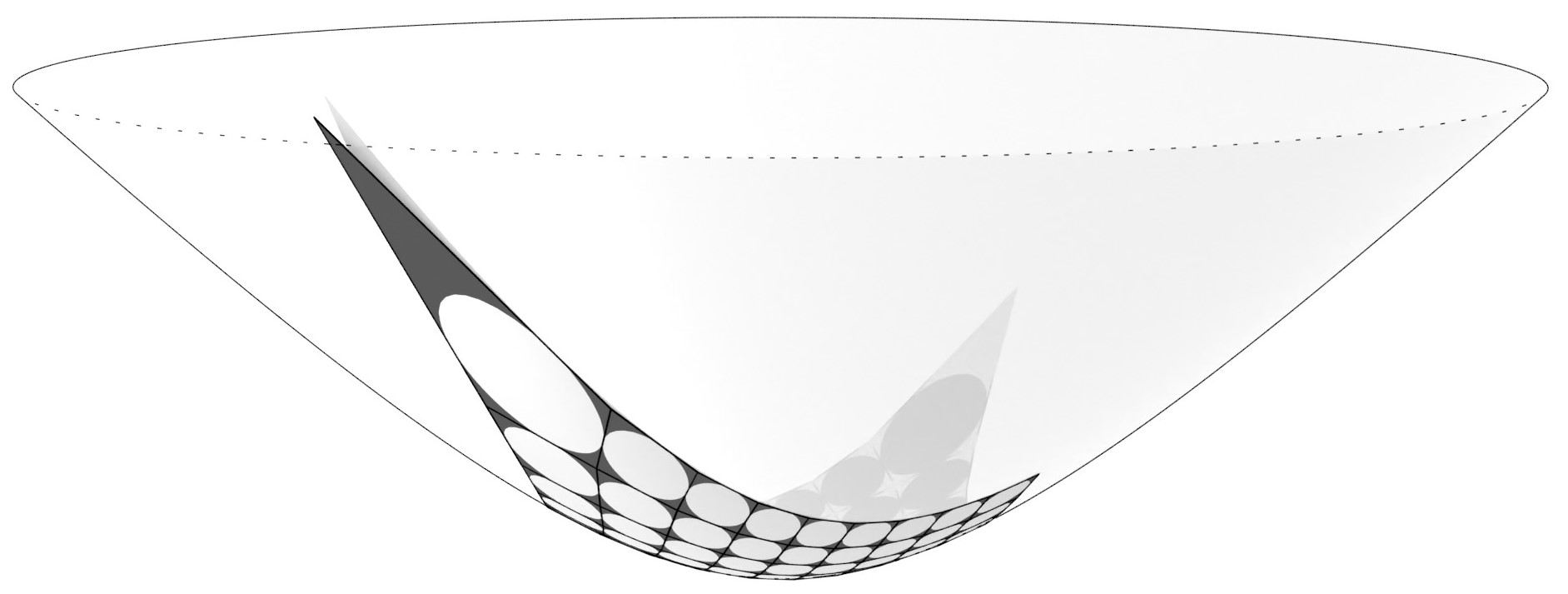}
	\hspace{2mm}
	\includegraphics[width=.475\textwidth]{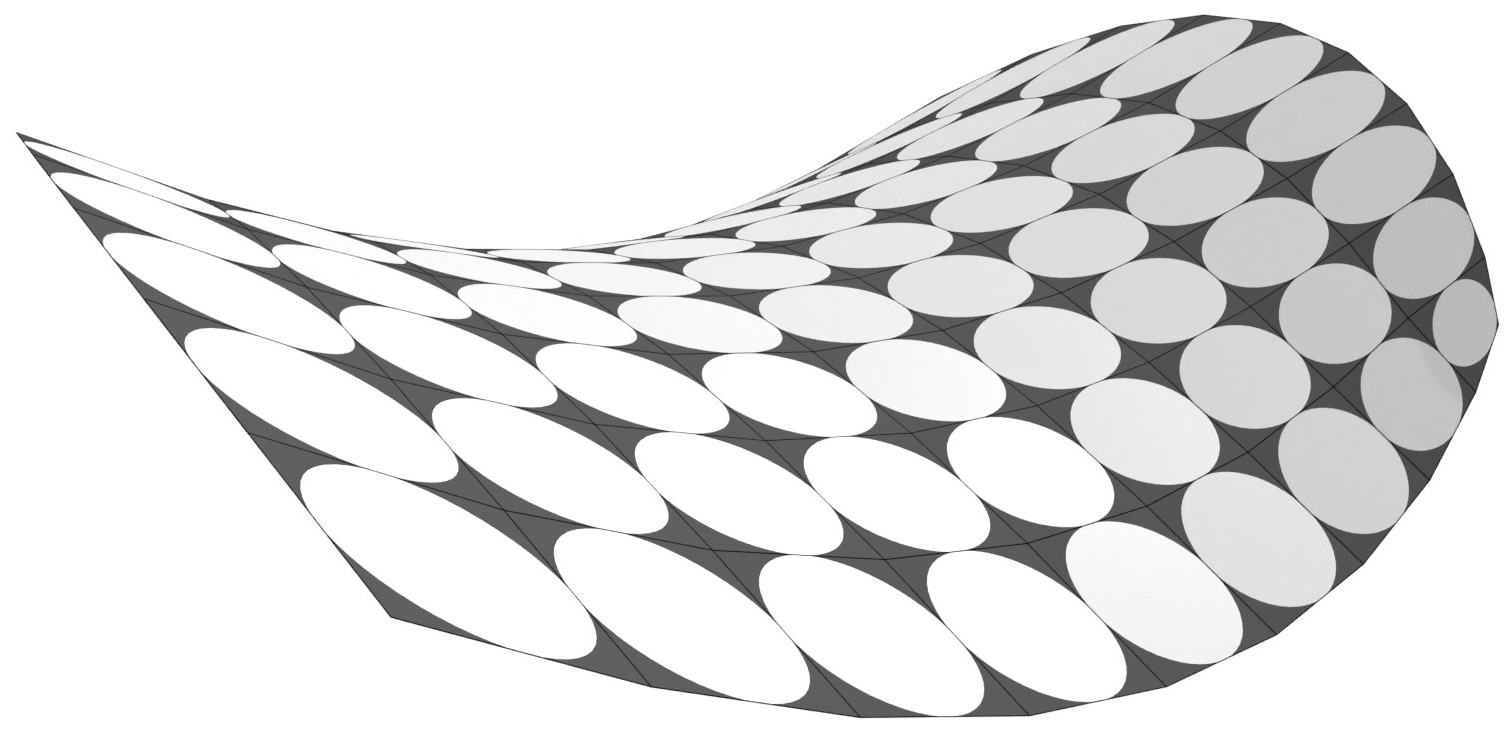}
  \caption{Left: the circles of a Koebe isothermic net. Right: the circles of a discrete maximal surface.}
	\label{fig:maximal}
\end{figure}

\begin{definition}
  A \emph{(discrete spacelike Lorentz) maximal surface} is an S-isothermic net $\iso$ such that the Christoffel dual $\iso^*$ is a Koebe isothermic net (see Figure~\ref{fig:maximal}).
	A \emph{(spacelike Lorentz) maximal congruence} is an isothermic congruence $\cc$ such that the Christoffel dual congruence $\cc^*$ is a Koebe congruence.
\end{definition}

The following direct characterization of discrete maximal surfaces is an immediate translation of the characterization of discrete minimal surfaces in \cite[Definition~6]{bhsminimal}.

\begin{theorem}\label{th:maxcirclecenters}
  An S-isothermic net $\iso$ is a maximal surface if and only if for each white vertex $w$ and its four adjacent vertices $b_1$, $b_2$, $b_3$, $b_4$ the four circle centers $\odot\isob(b_1)$, $\odot\isob(b_2)$, $\odot\isob(b_3)$, $\odot\isob(b_4)$ are contained in a plane.
\end{theorem}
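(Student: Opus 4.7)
The plan is to follow the Euclidean template of \cite{bhsminimal}, translating each step to Lorentz signature; the relevant ingredients (Christoffel duality, orthogonal intersection of spheres and circles, polarity with respect to the unit quadric) have direct counterparts in any signature. My first move is a useful reformulation: $\iso^*$ is Koebe if and only if every dual contact point $\isof^*(f)$ lies on $\unip$. The forward direction is immediate since each dual circle $\isob^*(b) \subset \unip$ contains all adjacent $\isof^*(f)$. For the converse, each $\isob^*(b)$ passes through the four contact points $\isof^*(f_1), \dots, \isof^*(f_4)$ of its incident faces; since a circle in $\lor$ meets $\unip$ in at most two points unless it lies on $\unip$, four distinct intersections force $\isob^*(b) \subset \unip$.

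The main step is to translate the scalar condition $\langle \isof^*(f), \isof^*(f)\rangle_\lor = 1$, for all $f$, into a condition on the original net around each white vertex $w$. Using the Christoffel formula from Section~\ref{sec:christoffel}, each difference $\isof^*(f) - \isof^*(f')$ can be written as $\pm (\isof(f) - \isof(f'))/\rho^2$, where $\rho$ is the radius of the circle or sphere at the common vertex of $f$ and $f'$ in the finer $\Z^2$. Combining the Koebe scalar equations with the Lorentz Pythagoras $|\isomw(w) - \isomb(b)|_\lor^2 = R(w)^2 + r(b)^2$ (which follows from the orthogonality of circle and sphere at $\isof(f)$), the system reduces, around each $w$, to a single linear dependence among the four vectors $\isomb(b_i) - \isomw(w)$. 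In the three-dimensional space $\lor$, such a non-trivial relation is equivalent, once the translation freedom of the Christoffel dual is fixed, to the coplanarity of the four points $\isomb(b_1), \dots, \isomb(b_4)$.

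The main obstacle is this last algebraic collapse: showing that the Koebe system, after substitution and use of Pythagoras, produces precisely the coplanarity identity. Tracking the $\pm$ signs in the Christoffel formula (alternating between horizontal and vertical edges of the finer $\Z^2$) and accounting for possibly negative Lorentz squared norms requires care, but the structure is identical to the Euclidean case in \cite{bhsminimal}, where no new geometric idea is needed to complete the translation.
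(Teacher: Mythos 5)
Your opening reformulation is essentially fine, but note that the count ``a circle meets $\unip$ in at most two points'' is not a generic fact about conics meeting quadrics (a conic can meet a plane section of a quadric in four points); it holds here because on the spacelike plane of $\isob^*(b)$ both the circle and the trace of $\unip$ are circles for the same induced positive-definite metric, so the difference of their equations is affine. The genuine gap is in your second step, which is where the entire content of the theorem lives and which you only assert. Two problems. First, your stated target is vacuous: any four vectors in the three-dimensional space $\lor$ admit a non-trivial linear dependence, so ``a single linear dependence among the $\isomb(b_i)-\isomw(w)$'' is no condition at all. Coplanarity of the four centers is an \emph{affine} dependence (coefficients summing to zero), equivalently the vanishing of one $3\times 3$ determinant, and your argument neither produces a specific relation nor checks that its coefficients sum to zero. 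Second, you never locate the circle centers geometrically, which is the indispensable input. Writing $T_i$ for the unit vectors from $\isomw(w)$ to the four contact points, the condition ``$\isof^*(f_i)\in\unip$ for all $i$'' localizes, via $\isof^*(f_i)-\isomw^*(w)=\epsilon_i T_i/R$ with $\epsilon_i=\pm1$ alternating, to the statement that $\epsilon_i\langle \isomw^*(w),T_i\rangle$ is independent of $i$. On the other side, because $\isob(b_i)$ meets the sphere $\isow(w)$ orthogonally at two consecutive contact points, its tangent lines there pass through $\isomw(w)$, which forces $\odot\isob(b_i)-\isomw(w)=\tfrac{R}{1+\langle T_{i-1},T_i\rangle}\,(T_{i-1}+T_i)$. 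The localized Koebe condition is then precisely that $\isomw^*(w)$ is Lorentz-orthogonal to every $T_{i-1}+T_i$, i.e.\ that the four circle centers \emph{together with} $\isomw(w)$ lie in a common plane.

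That last phrase exposes work your sketch does not see: the theorem asserts coplanarity of the four centers alone, so one must show $\isomw(w)$ automatically lies in their affine span. From $(T_4+T_1)-(T_1+T_2)+(T_2+T_3)-(T_3+T_4)=0$ one gets an identity $\sum_i(-1)^{i+1}\mu_i^{-1}\bigl(\odot\isob(b_i)-\isomw(w)\bigr)=0$ with $\mu_i=R/(1+\langle T_{i-1},T_i\rangle)$ and $\sum_i(-1)^{i+1}\mu_i^{-1}=R^{-1}\langle T_1-T_3,\,T_4-T_2\rangle$; when this is non-zero $\isomw(w)$ is an affine combination of the four centers and the two coplanarity statements coincide, while in the degenerate case the four centers are coplanar for free and the equivalence requires separate care. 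Finally, the localization only yields ``$\epsilon_i\langle\isomw^*(w),T_i\rangle$ constant at each $w$''; to conclude that all $\isof^*(f)$ lie on the single quadric $\unip$ after fixing the translation of the dual, you must still propagate these constants consistently across the net. For what it is worth, the paper itself offers no proof, importing the statement as a translation of \cite[Definition~6]{bhsminimal}; your instinct to follow that template is sound, but the proposal as written stops short of every step that makes the template work.
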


If $\cc$ is a maximal congruence, we understand the Koebe congruence $\cc^*$ as the normal map of $\cc$, in the sense that the vectors from the origin (the center of the unit-sphere $\unip$) to each point $\ccmw^*(w)$ are the normal vectors at $\ccmw(w)$.

Let $t\in \R$ and consider the map 
\begin{align}
	\ccmw^t = \ccmw + t \ccmw^*.
\end{align}
We think of $\ccmw^t$ as an \emph{offset surface} of $\ccmw$, since it is edge- (and face-) parallel to $\ccmw$ at some offset distance $t$. Each face of $\ccmw^t$ has an area $A^t$ that changes quadratically in $t$. This change is captured by the \emph{(discrete) Steiner formula} \cite{schiefunifii, schiefmaxmin, lpwywconical, bpwcurvatureparallel, bsddgbook}
\begin{align}
	A^t = A^0 - 2 H t +  K t^2, 
\end{align}
where we understand $H$ as the \emph{(discrete) mean curvature} and $K$ as the \emph{(discrete) Gauß curvature}. Because $\ccmw$ is a K{\oe}nigs net (see \cite{admpsiso, bsddgbook}), a simple computation shows that $H$ is actually \emph{zero}. This provides another justification for calling maximal congruences maximal, since they have constant discrete mean curvature zero, compare with Section~\ref{sec:smMaximal}.

\begin{remark}
	Which isothermic incircular nets are projections of maximal congruences? Let us give a local characterization which uses four black vertices $b_1,b_2,b_3,b_4$ adjacent to a white vertex $w$ in circular order. Consider the lines $\ell(b_1)$, $\ell(b_2)$, $\ell(b_3)$, $\ell(b_4)$ as in Remark~\ref{rem:koebecp}, as well as the four intersection points $P_{12}$, $P_{23}$, $P_{34}$, $P_{41}$ given by consecutive lines -- that is, $P_{12} = \ell(b_1) \cap \ell(b_2)$ and so on. Let us also denote by $M_i = \frac{1}{2}(\cp^1(b_i)+\cp^2(b_i))$ the midpoint of the two corresponding points in the pair of isothermic incircular nets $\cp^1,\cp^2$, which is on $\ell(b_i)$. Then $\cp^1$, $\cp^2$ are the projection of a maximal congruence if and only if
	\begin{align}
		\frac{M_1 - P_1}{P_1 - M_2}\frac{M_2 - P_2}{P_2 - M_3}\frac{M_3 - P_3}{P_3 - M_4}\frac{M_4 - P_4}{P_4 - M_1} = 1,
	\end{align}
	where we evaluate the equation by identifying $\eucl$ with the complex plane $\C$.
	The equation is a generalization of Menelaus' theorem (see \cite{bsddgbook}) that characterizes the observation in Theorem~\ref{th:maxcirclecenters}, that is, the coplanarity of the four circle centers $\iso(b_i)$. 
\end{remark}


\section{Weierstra{\ss} representation} \label{sec:weierstrass}

The geometric idea of the \emph{discrete Weierstraß representation} is as follows. First of all, we understand an orthogonal circle pattern in $\R^2$ as a discrete conformal map \cite{schrammocp}. If restricted to the unit disc, the orthogonal circle pattern is a hyperbolic orthogonal circle pattern in the Poincaré disk model of the hyperbolic plane. Then, we lift the orthogonal circle pattern via inverse stereographic projection to the upper half of the timelike unit-sphere $\unip$ to obtain a hyperbolic orthogonal circle pattern in $\unip$ (as in Definition~\ref{def:hocp}). Then we take the Christoffel dual to obtain the corresponding discrete maximal surface. Each operation may be expressed via formulas, combining them all yields the discrete Weierstraß representation, analogous to the smooth setting (see Section~\ref{sec:smMaximal}).

Compared to the discrete Weierstraß representation for minimal surfaces in Euclidean space \cite{bhsminimal}, we only need to adapt some of the signs in the formula to the Lorentz setup. Let $\Cp$ be a hyperbolic orthogonal circle pattern in the Poincaré disk model, that is, in the unit-disk in $\R^2 \simeq \C$. The points $\Cpf: F(\Z^2) \rightarrow \mathcal H^2$ are the intersection points of $\Cp$ which are contained in the unit disk $\mathcal H^2 \subset \C$. Let us also denote by $\rho(v)$ the radius and by $\Cpm(v)$ the center of the circle $\Cp(v)$. Let $\iso$ be the Koebe isothermic net that stereographically projects to $\Cp$, and $\iso^*$ the corresponding maximal surface. Let $w,w'$ be two white vertices adjacent to a common face $f$. Then
\begin{align}
	\isomw^*(w) - \isomw^*(w') = \pm \mathrm{Re}\!\left[\frac{R_\circ^*(w) + R_\circ^*(w')}{1-|\Cpf(f)|^2}\frac{\overline{\Cpmw(w)} - \overline{\Cpmw(w')}}{|\Cpmw(w) - \Cpmw(w')|} 
	\begin{pmatrix}
		\phantom{i(}1 + |\Cpf(f)|^2\phantom{)} \\ i(1 - |\Cpf(f)|^2) \\ 2 \Cpf(f)
	\end{pmatrix}\right]\!, \label{eq:weierstrass}
\end{align}
where $R^*_\circ$ are the radii of the spheres of $\isow^*$, which may be expressed as
\begin{align}
	R_\circ^*(w) = \frac{1 - |{\Cpmw(w)}|^2 + \rho_\circ(w)^2}{2 \rho_\circ(w)}. \label{eq:weierstrassradius}
\end{align}

Now, let $\cc^*$ be one of the two null congruences that correspond to $\iso^*$, as in Theorem~\ref{th:isocongruencetoisonet}. Additionally, let $\cp^*$ be the incircular net corresponding to $\cc^*$. We may restrict Equation~\eqref{eq:weierstrass} to the first two coordinates to obtain an explicit differential formula for $\odot\cpw^*$. Thus, we know the incircle centers of $\cp^*$. Moreover, the incircle radii (the radii of $\cpw^*$) are simply the radii $R^*_\circ$. Hence, the incircles $\cpw^*$ are determined. By taking the tangents $\cpf^*$ to the incircles in a consistent manner, we also obtain the whole of $\cpf^*$, and therefore also $\cpb^*$.

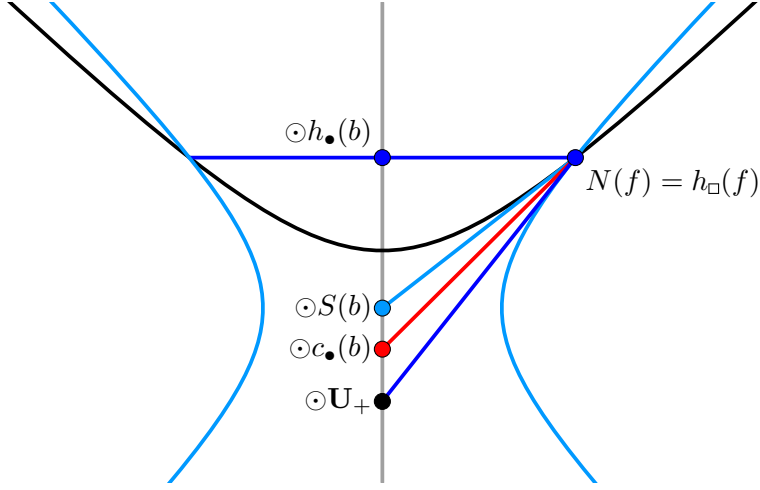
\begin{figure}
	\centering
	\begin{tikzpicture}[line cap=round,line join=round,>=triangle 45,x=1.0cm,y=1.0cm,scale=2]
		\definecolor{yqqqqq}{rgb}{1,0.,0.}
		\definecolor{qqqqff}{rgb}{0.,0.,1.}
		\definecolor{qqzzff}{rgb}{0.,0.6,1.}
		\definecolor{aqaqaq}{rgb}{0.6274509803921569,0.6274509803921569,0.6274509803921569}
		\clip(-2.544311505038312,-0.5360992384404077) rectangle (2.5,2.6443695781609473);
		\draw [line width=1.4pt,color=aqaqaq] (0.,-0.5360992384404077) -- (0.,2.6443695781609473);
		\draw [samples=50,domain=-0.99:0.99,rotate around={90.:(0.,0.)},xshift=0.cm,yshift=0.cm,line width=1.4pt] plot ({1.*(1+(\x)^2)/(1-(\x)^2)},{1.*2*(\x)/(1-(\x)^2)});
		\draw [samples=50,domain=-0.99:0.99,rotate around={90.:(0.,0.)},xshift=0.cm,yshift=0.cm,line width=1.4pt] plot ({1.*(-1-(\x)^2)/(1-(\x)^2)},{1.*(-2)*(\x)/(1-(\x)^2)});
		\draw [line width=1.4pt,color=qqqqff] (-1.2698132222214689,1.6162999781378669)-- (1.269813222221468,1.6162999781378662);
		\draw [samples=50,domain=-0.99:0.99,rotate around={0.:(0.,0.6186970324358333)},xshift=0.cm,yshift=0.6186970324358333cm,line width=1.4pt,color=qqzzff] plot ({0.7856296723362058*(1+(\x)^2)/(1-(\x)^2)},{0.7856296723362058*2*(\x)/(1-(\x)^2)});
		\draw [samples=50,domain=-0.99:0.99,rotate around={0.:(0.,0.6186970324358333)},xshift=0.cm,yshift=0.6186970324358333cm,line width=1.4pt,color=qqzzff] plot ({0.7856296723362058*(-1-(\x)^2)/(1-(\x)^2)},{0.7856296723362058*(-2)*(\x)/(1-(\x)^2)});
		\draw [line width=1.4pt,color=qqzzff] (0.,0.6186970324358333) -- (1.269813222221468,1.6162999781378662);
		\draw [line width=1.4pt,color=yqqqqq] (0.,0.3464867559163982) -- (1.269813222221468,1.6162999781378662);
		\draw [line width=1.4pt,color=qqqqff] (0.,0.) -- (1.269813222221468,1.6162999781378662);
		\begin{normalsize}
			\draw [fill=black] (0.,0.) circle (1.5pt) node[left] {$\odot \unip$};
			\draw [fill=qqzzff] (0.,0.6186970324358333) circle (1.5pt) node[left] {$\odot S(b)$};
			\draw [fill=qqqqff] (1.269813222221468,1.6162999781378662) circle (1.5pt) node[below right] {$N(f)=\isof(f)$};
			\draw [fill=qqqqff] (0.,1.6162999781378662)  circle (1.5pt) node[above left] {$\isomb(b)$};
			\draw [fill=yqqqqq] (0.,0.3464867559163982) circle (1.5pt) node[left] {$\ccmb(b)$};
		\end{normalsize}
	\end{tikzpicture}
	\caption{Local configuration in a Koebe isothermic net $\iso$ and the corresponding null congruence $\cc$. We see the unit-sphere $\unip$ (black), the circle $\iso(b)$ and the normal vector $N(f) = \isof(f)$ (dark blue), the sphere $S(b)$ and its center $\odot S(b)$ (teal), and finally the center of the null-sphere $\ccb(b)$ as well as its generator containing $\isof(f)$ (red).}
	\label{fig:weierstrass}
\end{figure}

However, the main advantage of the Weierstraß representation is to have explicit formulas. Therefore, we now briefly explain how to obtain explicit formulas not just for $\cpw^*$ but also for $\cpb^*$.

First, note that since $\iso$ is a Koebe isothermic net there is also a sphere $S(b)$  
that intersects $\unip$ orthogonally in the circle $\isob(b)$, see Figure~\ref{fig:weierstrass}.
The goal is to calculate $\ccmb^*(b) - \isof^*(f)$ for incident $b\in \Zb^2$, $f\in F(\Z^2)$, which we do by using Equation~\eqref{eq:nullspheredual} which shows that
\begin{align}
	\ccmb^*(b) - \isof^*(f) = \frac{\ccmb(b) - \isof(f)}{r_\bullet(b)^2}.
\end{align}
Here, $r_\bullet(b)$, the radius of the circle $\isob(b)$, may be calculated by three instances of Pythagoras' theorem in Figure~\ref{fig:weierstrass}. This yields
\begin{align}
	r_\bullet(b) =\frac{R_\bullet(b)}{\sqrt{1-R_\bullet(b)^2}},
\end{align}
where $R_\bullet(b)$ is the radius of $S(b)$
which is expressed, analogously to Equation~\eqref{eq:weierstrassradius}, as
\begin{align}
	R_\bullet(w) = \frac{2 \rho_\bullet(w)}{1 - |{\Cpmb(w)}|^2 + \rho_\bullet(w)^2}.\label{eq:dualiso}
\end{align}
Thus, it remains to calculate $\ccmb(b) - \isof(f)$.  The point of contact $\isof(f)$ an \emph{edge normal} of both $\iso$ and $\iso^*$, which is why we denote $\isof(f)$ by $N(f)$ for the following computations. The normal $N(f)$ is obtained by stereographic projection, that is
\begin{align}
	N(f) = \mathrm{Re}\left[ \frac{1}{1-|\Cpf(f)|^2} \begin{pmatrix}
		\phantom{-i}2 \Cpf(f) \\ -i2 \Cpf(f)  \\ 1 + |\Cpf(f)|^2 
	\end{pmatrix}\right].
\end{align} 
Additionally, let $b'$ be the other black vertex adjacent to $f$. We can write
\begin{align}
	\odot S(b) - N(f) = R_\bullet(b)T_\bullet(f), 
\end{align}
 with
\begin{align}
		T_\bullet(f) = \mathrm{Re}\left[\frac{1}{1-|\Cpf(f)|^2}\frac{\overline{\Cpmb(b)} - \overline{\Cpmb(b')}}{|\Cpmb(b) - \Cpmb(b')|} 
	\begin{pmatrix}
		\phantom{i(}1 + |\Cpf(f)|^2\phantom{)} \\ i(1 - |\Cpf(f)|^2) \\ 2 \Cpf(f)
	\end{pmatrix}\right], \label{eq:tblack}
\end{align}
analogously to Equation~\eqref{eq:weierstrass}. Moreover, a small calculation shows that
\begin{align}
	\ccmb(b) = \frac{1}{1\pm R_\bullet(b)}{\odot S}(b) = \frac{1}{1\pm R_\bullet(b)}(N(f) + R_\bullet(b) T_\bullet(f)).
\end{align}
Thus, we are able to write
\begin{align}
	\ccmb(b) - \isof(f) = \frac{R_\bullet(b)}{1\pm R_\bullet(b)}(T_\bullet(f) \mp N(f)),
\end{align}
and therefore, using Equation~\eqref{eq:dualiso},
\begin{align}
	\ccmb^*(b) - \isof^*(f) = \frac{1\mp R_\bullet(b)}{R_\bullet(b)}(T_\bullet(f) \mp N(f)). \label{eq:weiercontactb}
\end{align}
Furthermore, we have that, where $T_\circ(f)$ is obtained analogously to Equation~\eqref{eq:tblack} as
\begin{align}
	\isomw^*(w) - \isof^*(f) = \pm R_\circ^*(w)  T_\circ(f). \label{eq:weiercontact}
\end{align}

In clonclusion, we are able to obtain combined integral formulas for
\begin{enumerate}
	\item $\ccmw^*$, $\isof$ and $\ccmb^*$ (using Equation~\eqref{eq:weiercontactb} and Equation~\eqref{eq:weiercontact}), 
	\item all of $\ccw^*$ and $\ccb^*$ (as we have formulas for the radii of these spheres),
	\item $\cpmw^*$ and $\cpmb^*$ (via projection of our formulas to the first to coordinates), 
	\item the incircular net $\cp^*$ in terms of the hyperbolic orthogonal circle pattern $\Cp$.
\end{enumerate}


\section{The associated family} \label{sec:associated}

\begin{figure}
	\centering
	\begin{overpic}[scale=0.17]{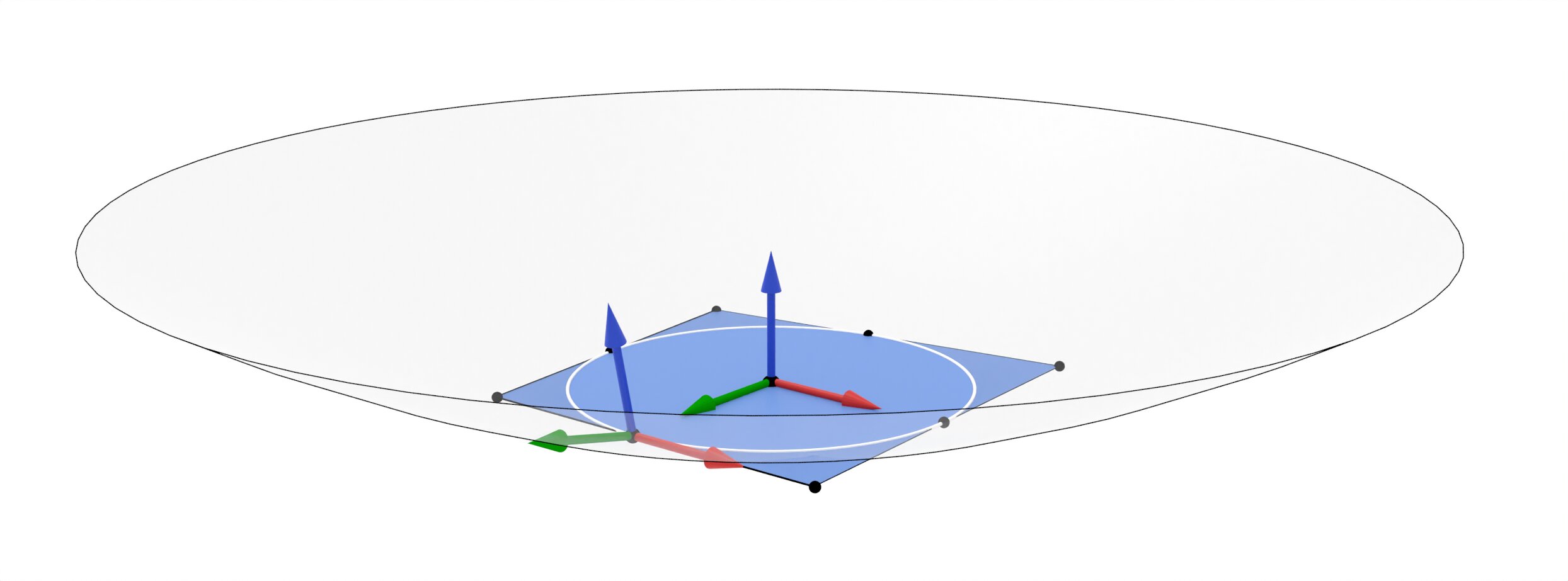}		
		\put(50,13){$m$}
		
		\put(50,19){$n$}
		\put(54.5,12){$T_1$}
		\put(44,12.24){$b_1$}
		
		\put(39.2,17){$N_1$}
		\put(45,5){$T_1$}
		\put(33,6.2){$B_1$}
		
		\put(39,6.7){$K_1$}
		\put(60.5,8){$K_2$}
		
		\put(28,12){$P_1$}
		\put(52,3.5){$P_2$}
		\put(68.5,13){$P_3$}

		\put(10,12){$\unip$}
	\end{overpic}
	\caption{The frames $(T_i,n,b_i)$ and $(T_i, N_i, B_i)$ used in the calculations for the associated family.}
	
	\label{fig:frames}
\end{figure}

As discussed in Section~\ref{sec:smMaximal}, a smooth maximal surface $f$ comes with an associated family $(f^\varphi)_{\varphi \in \S^1}$ of maximal surfaces. Each member $f^\varphi$ of the associated family is obtained by rotating $\d f$ by $\varphi$ around the surface normal and then integrating the rotated $\d f$. In \cite{bhsminimal} the authors introduce an analogous discrete construction for minimal S-isothermic surfaces in Euclidean space. In this section we study how the discrete construction translates to maximal congruences in Lorentz space and the corresponding incircular nets in $\eucl$.

Let $\cc$ be a Koebe congruence and $\iso$ the corresponding S-isothermic net, and let $\isoc$ be the combination of $\iso$ and $\isof$ as in Equation~\eqref{eq:combinediso}. Fix an angle $\varphi \in \S^1$. For a face $f$, let $J_\varphi(f)$ be the (elliptic) Lorentz rotation with angle $\varphi$ around the (timelike) axis spanned by the center of $\unip$ and $\isof(f)$. Define the associated dual differential via
\begin{align}
	\d \isomw^\varphi(w,w') = \pm (R^{-1}(w) + R^{-1}(w')) \frac{J_\varphi(f)[\d \isomw(w,w')]}{|\d \isomw(w,w')|},
\end{align}
for all white vertices $w,w'$ adjacent to the common face $f$.

Let us investigate the properties of $\d \isomw^\varphi$ with some calculations. To simplify notation, let $P = (P_1, P_2, P_3,P_4)$ be a planar quad tangential to the space-like unit-sphere $\unip$ (like the quads in $\isomw$ of a Koebe congruence). Let $T_1, T_2, T_3, T_4$ be the unit edge vectors (or unit tangent vectors) of the quad $P$. Also let $R_i$ be the radii of the spheres at the vertices, so that
\begin{align}
	P_{i+1} - P_i = (R_{i+1} + R_i) T_i.
\end{align}
Let $K_i$ be the touching point on the edge $P_iP_{i+1}$, so that $K_i - P_i = R_i T_i$. Also let $n$ be the normal vector of the incircle $C$ with center $m$ and radius $r$, which passes through the four points $K_i$. Let $b_i = T_i \times n$ be the (bi)normal vector to $T_i$ in the quad plane, so that $m - K_i = r b_i$. 

Recall that the dual quad $P^*$ is defined by
\begin{align}
	P^*_{i+1} - P^*_i = \pm (R_{i+1}^{-1} + R_i^{-1}) T_i =(R^*_{i+1} + R^*_i) T^*_i, \label{eq:dualquad}
\end{align}
where $R^*_i = R^{-1}_i$,  $T^*_i = \pm T_i$ and the sign is $+$ for horizontal edges and $-$ for vertical edges. The dual contact points satisfy $K^*_i - P_i^* = R^*_iT_i^*$, and the dual incircle radius is $r^* = r^{-1}$. Note that if $\gamma_i$ is the corner angle at $P_i$, then in the dual quad we have $\gamma_i^* = \pi - \gamma_i$.

In order to obtain the associated family, let us also consider at each point $K_i$ the normal vector to $\unip$, which we denote by $N_i$. Note that $K_i$ coincides with $N_i$ by construction, but this will not be the case in the associated family so we use different symbols. To express $N_i$ and $B_i := T_i \times N_i$, we use the rotation matrix in the orthonormal basis $(T_i, n, b_i)$ 
\begin{align}
	M_\theta = \begin{pmatrix}
		1 & 0 & 0 \\
		0 & \cosh\theta & \sinh \theta \\
		0 & \sinh \theta & \cosh \theta
	\end{pmatrix},
\end{align}
so that
\begin{align}
	T_i &= M_\theta T_i = T_i,\\
	N_i &= M_\theta n = n \cosh \theta + b_i \sinh \theta, \\
	B_i &= M_\theta b_i = n \sinh \theta + b_i \cosh \theta.
\end{align}
Following \cite{bhsminimal}, rotation in the associated family means rotating $T_i$ around $N_i$ by an angle $\varphi$. In the orthonormal basis $(T_i, N_i, B_i)$ the corresponding rotation matrix is
\begin{align}
	M_\varphi = \begin{pmatrix}
			\cos \varphi & 0 & -\sin \varphi\\
			0 & 1 & 0 \\
			\sin \varphi & 0 & \phantom{-}\cos \varphi
		\end{pmatrix}, \label{eq:mphi}
\end{align}
so that
\begin{align}
	&T_i^\varphi & &= M_\varphi T_i & & = T_i \cos \varphi + B_i \sin \varphi  & &= T_i \cos \varphi + n \sinh \theta \sin \varphi + b_i \cosh \theta \sin \varphi, & &\label{eq:associatedtangent}\\
	&N_i^\varphi & &= M_\varphi N_i & &= N_i  & &=  n \cosh \theta + b_i \sinh \theta, & & \label{eq:associatedtangent1}\\
	&B_i^\varphi & &= M_\varphi B_i & &= -T_i \sin \varphi + B_i \cos \varphi & & =  -T_i \sin \varphi + n \sinh \theta \cos \varphi + b_i \cosh \theta \cos \varphi. & & \label{eq:associatedtangent2}
\end{align}
The associated quad $P^\varphi$ is obtained by integrating
\begin{align}
	P^\varphi_{i+1} - P^\varphi_i = \pm (R^*_{i+1} + R^*_i) T_i^\varphi. \label{eq:assointegral}
\end{align}
This formula closes in the $T_i$ components since it closes for $\pm T_i = T^*_i$  already, for the $n$ components it is just a telescopic sum, and for the $b_i$ component it follows from the $T_i$ components and the linearity of $b_i = T_i \times n$. Let us put this in a theorem.

\begin{theorem}
	The associated discrete differential $\d \isomw^\varphi$ is closed.
\end{theorem}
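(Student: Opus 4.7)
The plan is to verify closure by summing $P^\varphi_{i+1}-P^\varphi_i$ cyclically around a face and showing the total vanishes. I would decompose each summand $\pm(R^*_{i+1}+R^*_i)T_i^\varphi$ along the orthonormal frame $(T_i, n, b_i)$ using the expansion of $T_i^\varphi$ given in Equation~\eqref{eq:associatedtangent}, splitting the cyclic sum into three pieces. The key preliminary observation is that both the face-normal $n$ and the tilt parameter $\theta$ are constant across the four edges of a face: $n$ is, by definition, the normal to the face-plane; and $\theta$ is constant because at every edge-tangent point $K_i$ on $\unip$ the component $\langle K_i, n\rangle$ equals the (signed) distance from the origin to the face-plane, which is intrinsic to the face.

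With these quantities factored out, the three components handle themselves in sequence. The $T_i$-component of the cyclic sum becomes $\cos\varphi \sum_i \pm(R^*_{i+1}+R^*_i)T_i$, which by Equation~\eqref{eq:dualquad} is exactly $\cos\varphi \sum_i(P^*_{i+1}-P^*_i)=0$, since the Christoffel dual quad $P^*$ closes. The $n$-component factors as $n\sinh\theta\sin\varphi \sum_i \pm(R^*_{i+1}+R^*_i)$; the alternating signs $(+,-,+,-)$ make each radius appear exactly once with each sign, so the scalar sum telescopes to zero. The $b_i$-component requires the extra observation $b_i = T_i\times n$: since $n$ is face-constant, the cross product distributes out to give $\cosh\theta\sin\varphi\,\bigl(\sum_i\pm(R^*_{i+1}+R^*_i)T_i\bigr)\times n$, which vanishes by the same dual-closure identity already used for the $T_i$-piece.

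I do not expect a serious obstacle; the argument is structural and mirrors the Euclidean version in \cite{bhsminimal}. The only two points requiring care are (i) that the $\pm$ in the definition of $\d\isomw^\varphi$ is precisely the same $\pm$ appearing in the Christoffel dual Equation~\eqref{eq:dualquad} (which it is by construction, being inherited from the horizontal/vertical edge distinction in $\Z^2$), and (ii) the face-constancy of $\theta$ argued above, which is the single place where the Lorentzian geometry of $\unip$ enters nontrivially. Everywhere else the hyperbolic rotation $M_\theta$ plays exactly the role of its Euclidean analogue, so no new ingredient beyond Equation~\eqref{eq:dualquad} is needed.
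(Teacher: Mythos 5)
Your proposal is correct and follows essentially the same route as the paper: the paper also splits the cyclic sum around each black vertex into the $(T_i, n, b_i)$ components, closing the $T_i$-part via the closure of the Christoffel dual quad $P^*$, the $n$-part as a telescoping sum, and the $b_i$-part via linearity of $b_i = T_i \times n$. Your added justification that $n$ and $\theta$ are face-constant (the paper records this as $\sinh\theta = r$ with $r$ the incircle radius) fills in a detail the paper only asserts, but introduces no new idea.
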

\begin{proof}
	It suffices to check the closing condition around each black vertex, which corresponds to the fact that $P^\varphi$ is well-defined in the calculation above.
\end{proof}

Therefore we may integrate $\d \isomw^\varphi$ to obtain a net $\isomw^\varphi$, which we consider to be a surface in the associated family of $\isomw^*$. As we will see in the next section, $\isomw^\varphi$ shares many properties with S-isothermic nets, but not all, and is therefore not an S-isothermic net. Instead, we consider $\isomw^\varphi$ to be a discrete maximal surface that is still conformally parametrized -- but \emph{not} along curvature lines. Thus, we think of $\isomw^\varphi$ as a discretization of an isothermic surface but not as a discretization of a curvature-line parametrization.

\begin{figure}	\centering
	\begin{minipage}{\linewidth}
		\includegraphics[width=.45\linewidth]{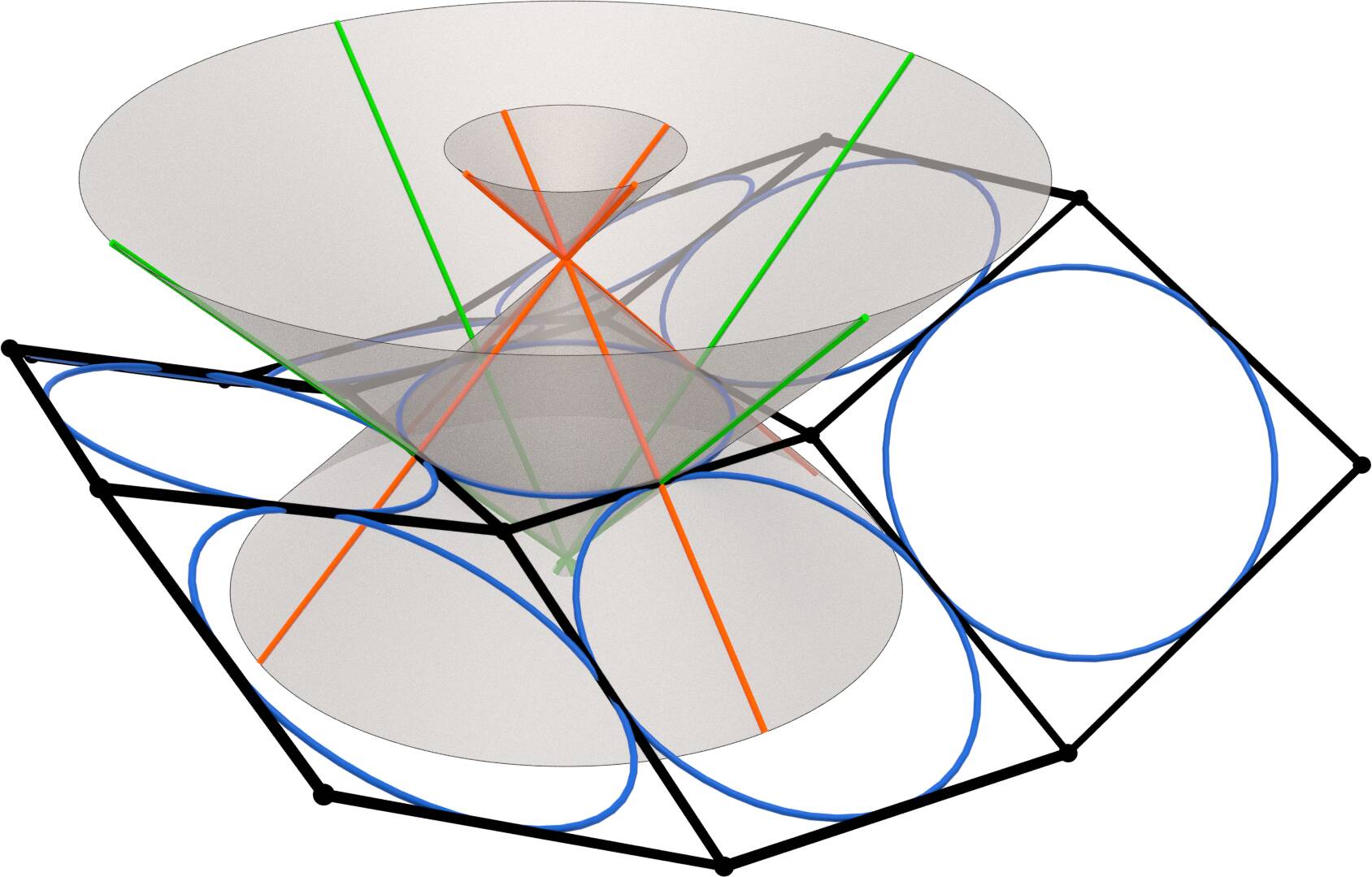}
		\hspace{1cm}
		\includegraphics[width=.45\linewidth]{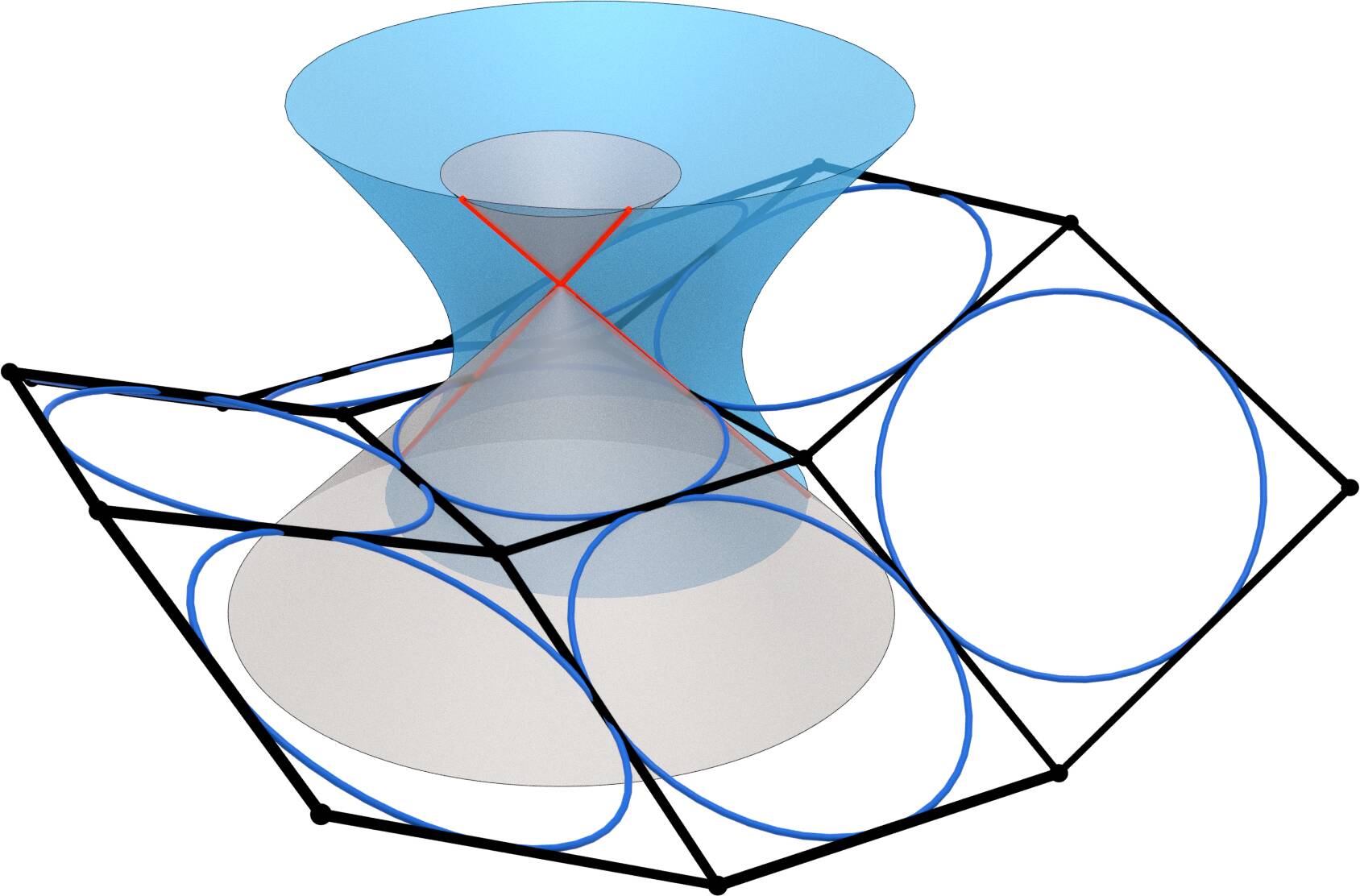}
		\vspace{.2cm}
		\newline
		\includegraphics[width=.45\linewidth]{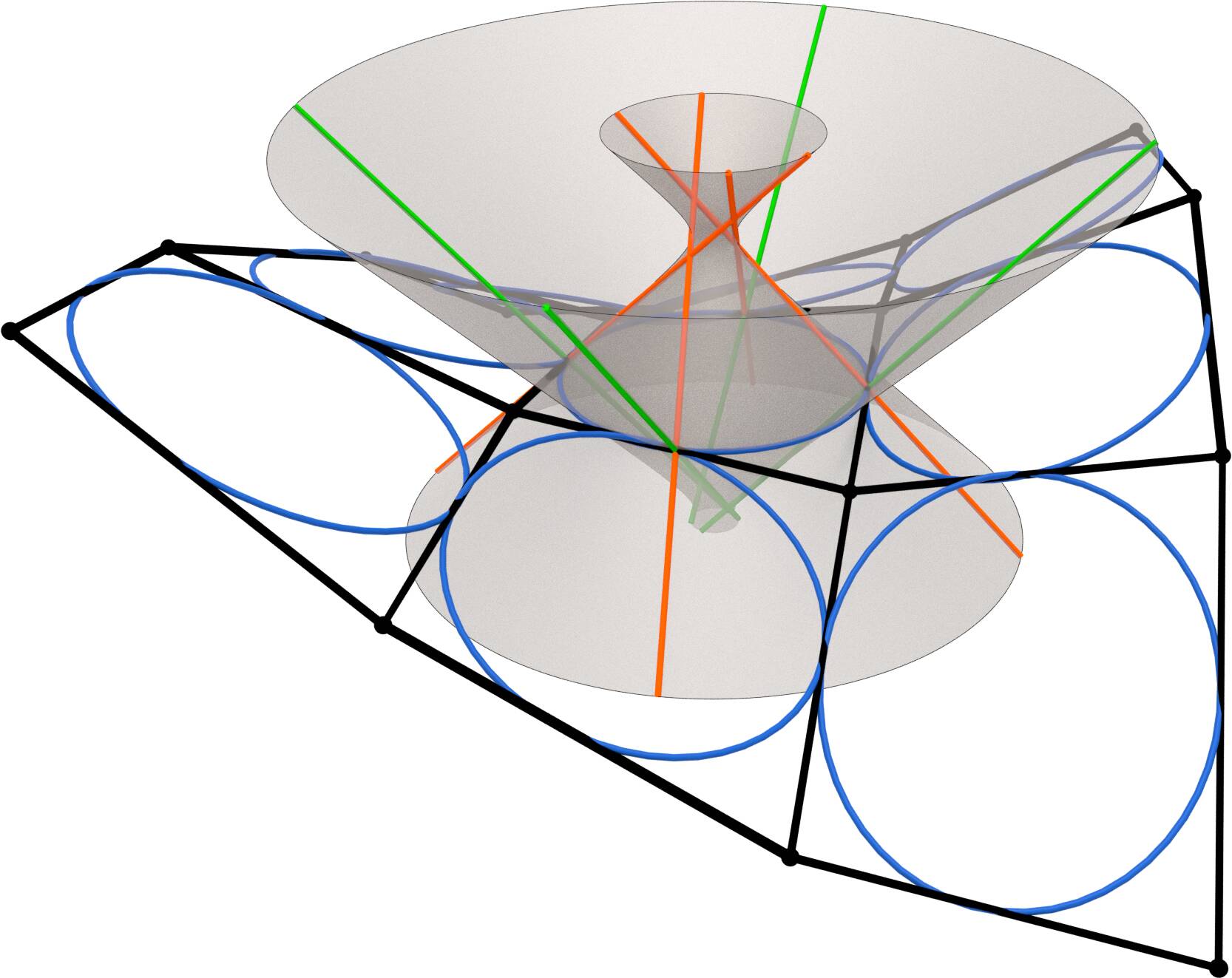}
		\hspace{1cm}
		\includegraphics[width=.45\linewidth]{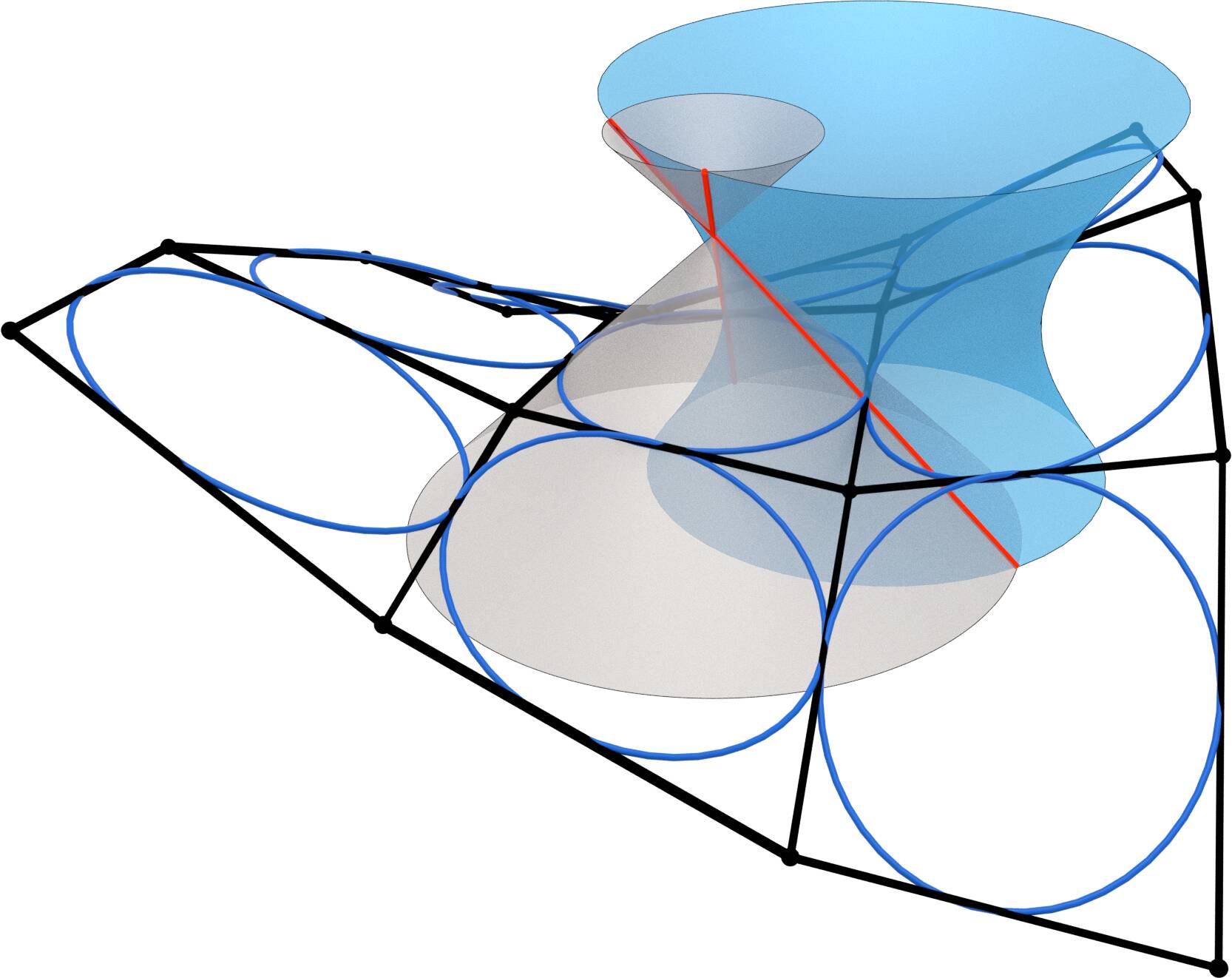}
		\vspace{.2cm}
		\newline
		\includegraphics[width=.45\linewidth]{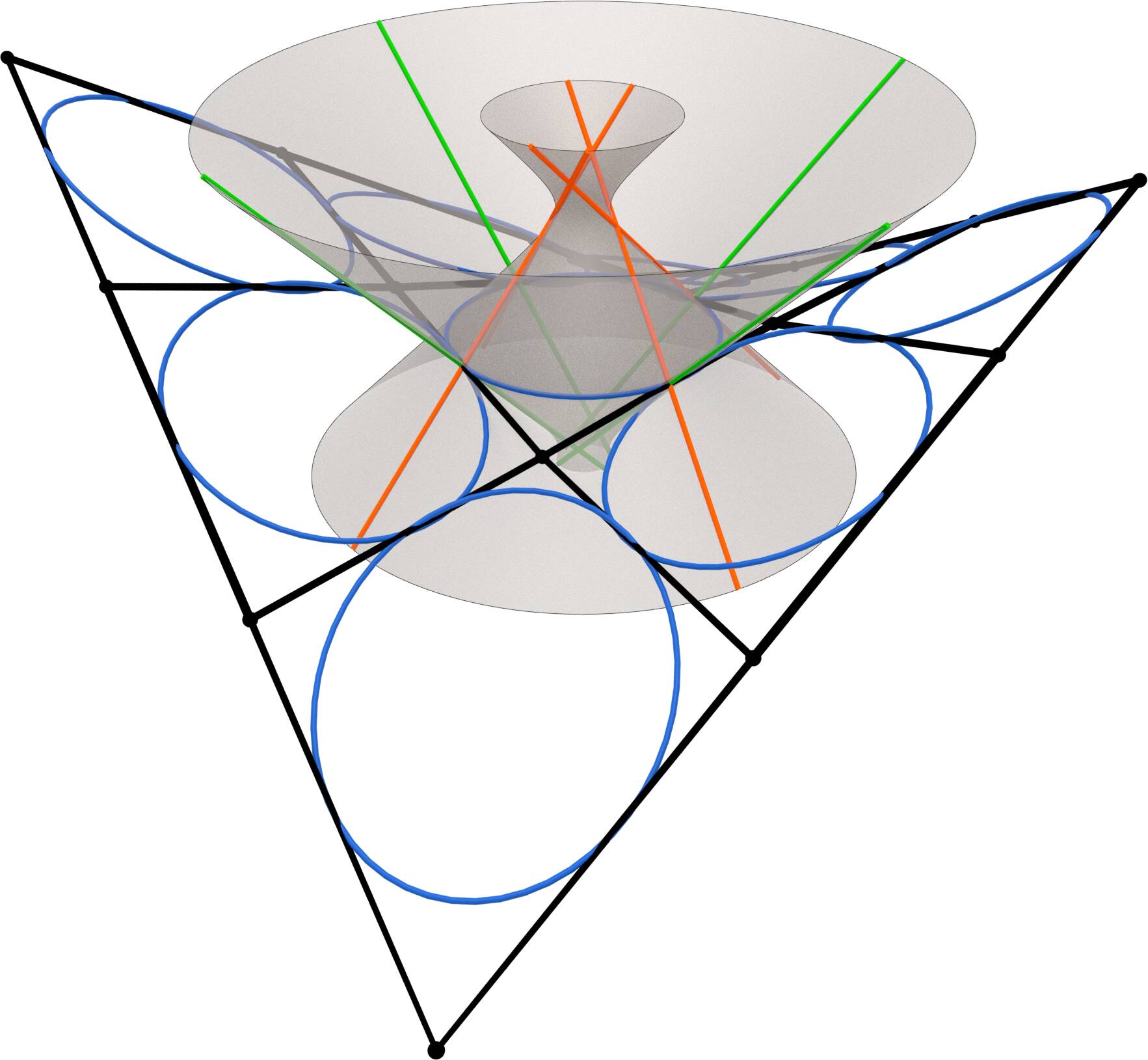}
		\hspace{1cm}
		\includegraphics[width=.45\linewidth]{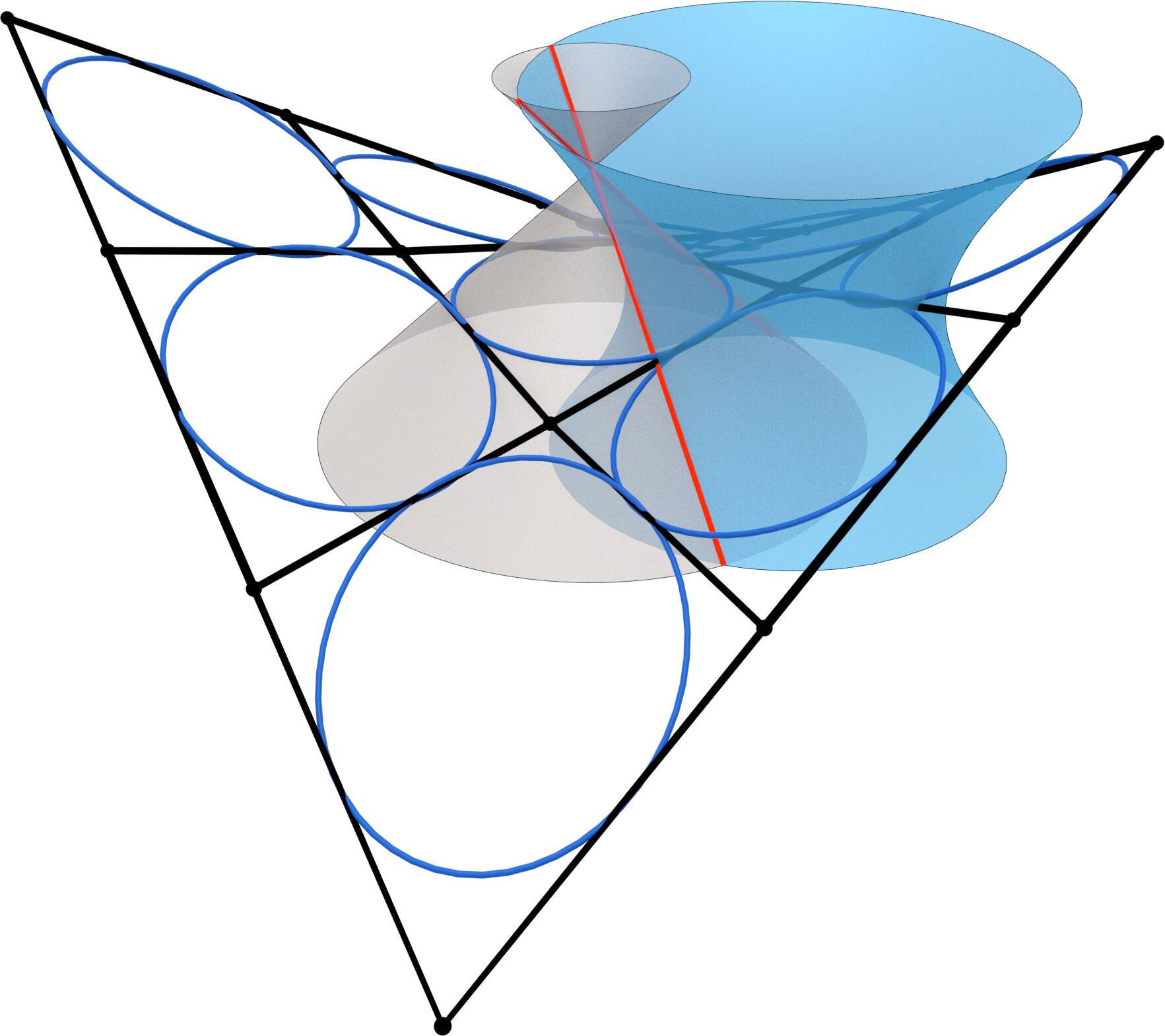}
		\vspace{.2cm}
		\newline
		\includegraphics[width=.45\linewidth]{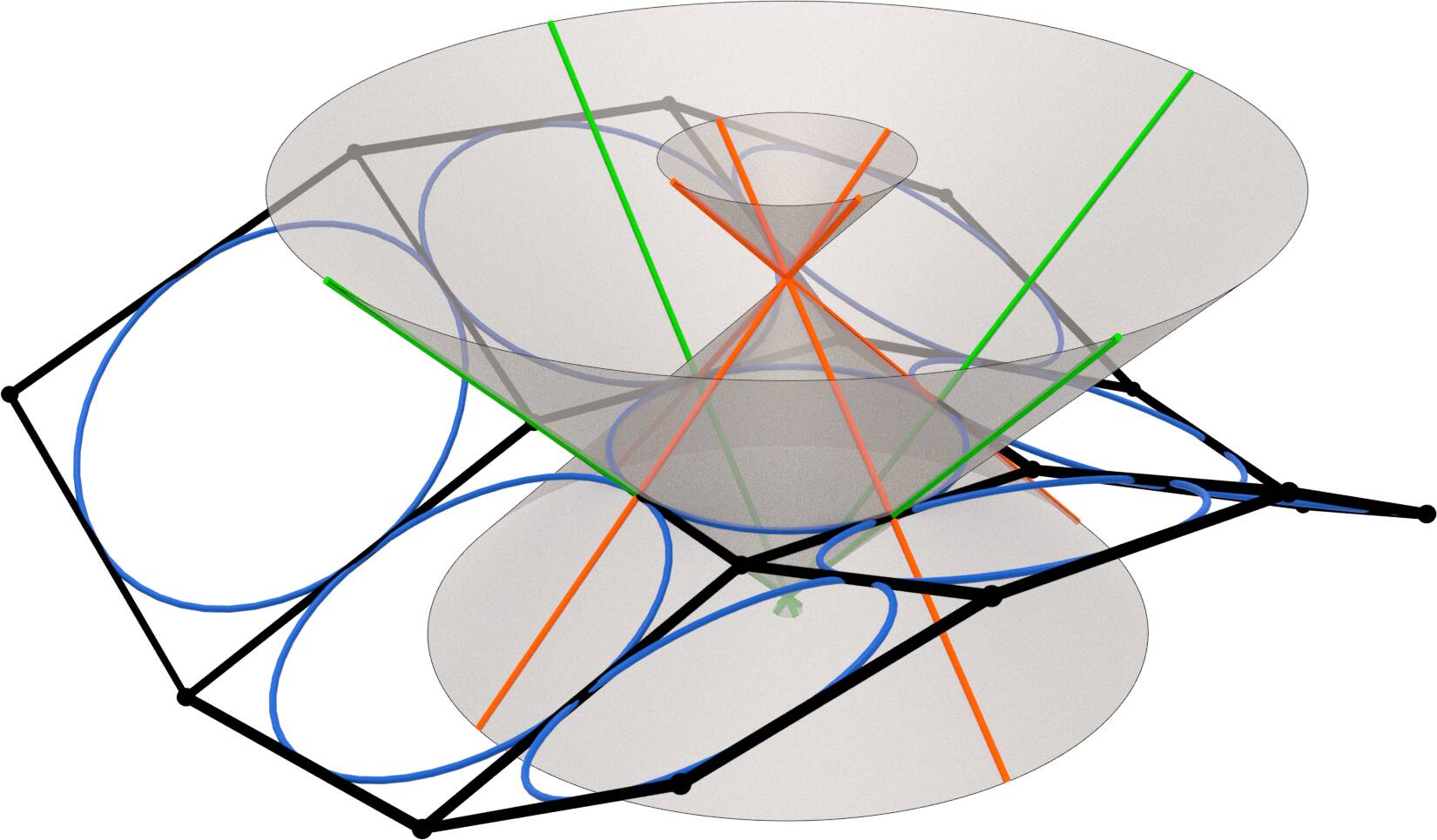}
		\hspace{1cm}
		\includegraphics[width=.45\linewidth]{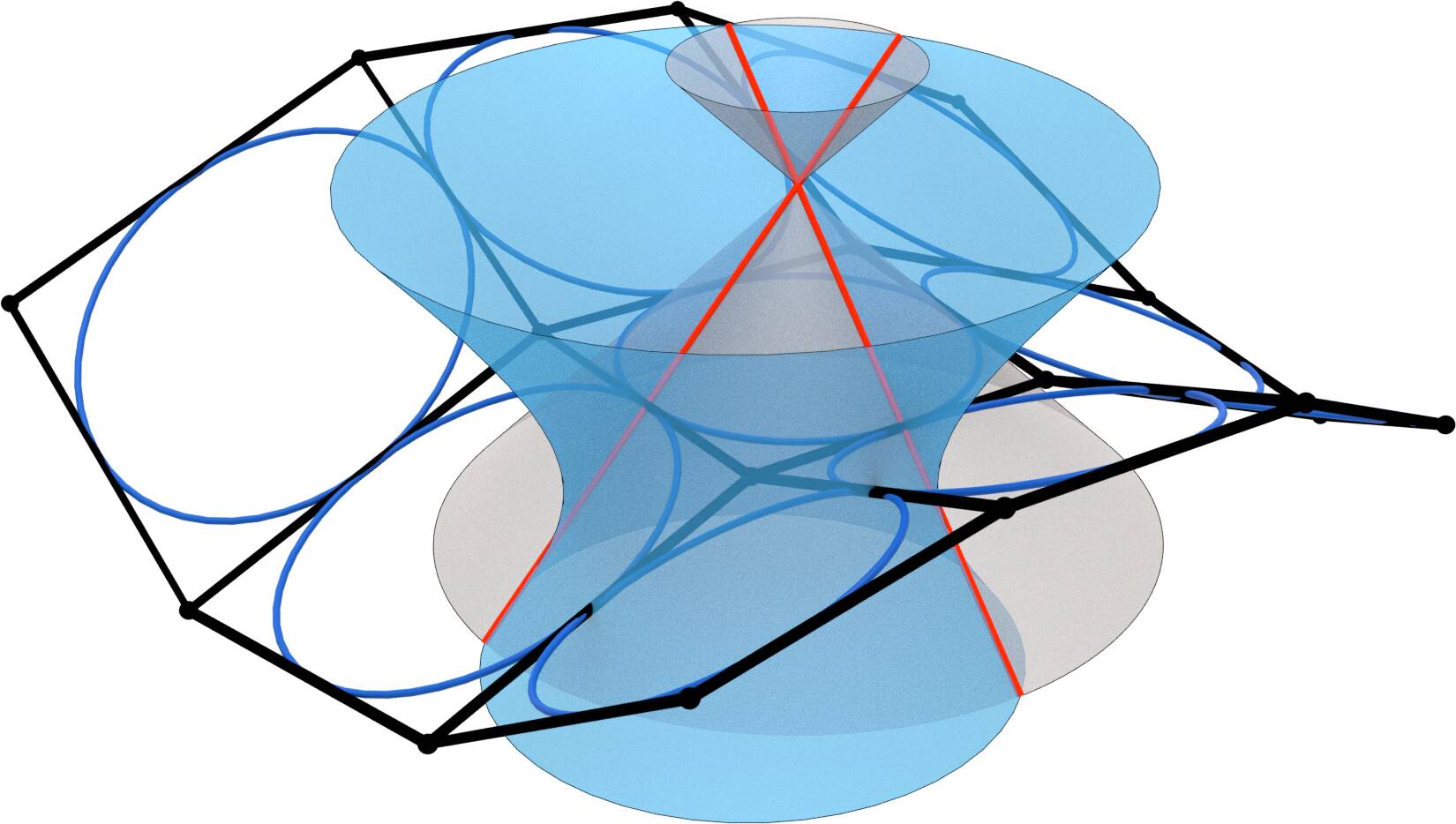}
	\end{minipage}
	\caption{Congruences in the associated family of a maximal surface, from top to bottom for the values $\varphi = 0, \frac{\pi}{4}, \frac{\pi}{2}, \pi$, the latter is a rotation of the original surface. Left: Spheres of the contact congruences $(\cc^1)^\varphi$ and $(\cc^2)^\varphi$ of radii $\sin \varphi$. The spheres contain adjacent generators and corresponding face circles. Right: A null sphere and a timelike vertex sphere of one of the associated null congruences $\dot \cc^\varphi$. }
	\label{fig:asso}
\end{figure}

\section{The associated congruences}  \label{sec:radii}

Before we begin, let us recall the notion of a contact congruences \cite{admpsiso} which generalizes null-congruences (Definition~\ref{def:nullcongruence}) and which we need in the following.
\begin{definition} \label{def:contactcongruence}
	A \emph{contact congruence} $c$ is a pair of maps
	\begin{align}
		\cc&: \Zw^2 \rightarrow \osp_-(\lor) \cup \sp_0(\lor), & 
		\ccf&: F(\Z^2) \rightarrow \oli_0(\lor),
	\end{align}
	such that the oriented sphere $\cc(v)$ is in oriented contact with the oriented isotropic line $\ccf(f)$ whenever $v$ and $f$ are incident.
\end{definition}

In the previous section, we followed the technique of \cite{bhsminimal} translated to Lorentz geometry -- with a few more details. In the following we take the calculations further to prove some new results (which do not readily translate from or to Euclidean geometry).

Since a quad $P_1^\varphi$, $P_2^\varphi$, $P_3^\varphi$, $P_4^\varphi$ is obtained by integrating as in Equation~\eqref{eq:assointegral}, there are still spheres $\isow^\varphi$ at the white vertices with radii given by $R_i^\varphi = R_i^* = R_i^{-1}$. Moreover, for adjacent white vertices the spheres are touching. Consequently, the touching coins lemma \cite{bhsminimal} implies that there is still a circle $C^\varphi$ which passes through the four points of contact $K_1^\varphi$, $K_2^\varphi$, $K_3^\varphi$, $K_4^\varphi$. However, circles of adjacent faces are not touching. 

Let us consider the orthogonal projection $Q^\varphi$ of the quad $P^\varphi$ to the plane of the circle $C^\varphi$. 

\begin{lemma}\label{lem:assosimilarity}
	The projected quad $Q^\varphi$ is (Lorentz) similar to $P^*$, and the scaling factor is $\mu = \sqrt{1 + r^2 \sin^2 \varphi}$.
\end{lemma}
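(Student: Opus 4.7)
The plan is to work in a Lorentz-orthonormal frame adapted to the quad plane of $P$, show that orthogonal projection onto the plane of $C^\varphi$ reduces to discarding the timelike component, and then verify that the edges of $P^\varphi$ become the images of the edges of $P^*$ under a fixed conformal linear map of scaling factor $\mu$. To set the stage, let $n$ be the (timelike) unit normal of the spacelike quad plane of $P$. Since the points $K_i$ lie in this plane and on $\unip$, writing $K_i = m + r b_i$ (with $m$ the closest point of the plane to the origin of $\R^{2,1}$ and $b_i$ the $90^\circ$ rotation of $T_i$ in the quad plane) and comparing with $N_i = n\cosh\theta + b_i\sinh\theta$ from \eqref{eq:associatedtangent1} (which is proportional to $K_i$) yields the identification $r = \sinh\theta$. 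This turns the target factor $\mu = \sqrt{1 + r^2 \sin^2\varphi}$ into a purely algebraic expression in $\theta$ that I will match at the end.

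Next I would show that the plane of $C^\varphi$ is parallel to the quad plane of $P$. By \eqref{eq:associatedtangent}, the $n$-component of $T_i^\varphi$ equals $\sin\varphi\sinh\theta$, so the $n$-component of each edge of $P^\varphi$ is $\pm(R_i^* + R_{i+1}^*)\sin\varphi\sinh\theta$, and the signs $\pm$ alternate around the face of $\Zw^2$ -- this alternation is exactly what produces the telescopic closure remark following \eqref{eq:assointegral}. A short telescoping calculation then shows that the four $n$-coordinates of the contact points $K_i^\varphi = P_i^\varphi \pm R_i^* T_i^\varphi$ coincide, because at each vertex the running $n$-sum along $P^\varphi$ is cancelled by the $\pm R_i^*$ contribution to $K_i^\varphi$ up to a constant independent of $i$. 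Consequently, $C^\varphi$ lies in a plane parallel to $n^\perp$, and orthogonal Lorentz projection onto that plane coincides with orthogonal projection onto the quad plane of $P$.

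Finally, I would compute the projection. Dropping the $n$-component of each edge of $P^\varphi$ gives
\begin{align*}
Q_{i+1}^\varphi - Q_i^\varphi = \pm(R_i^* + R_{i+1}^*)\bigl(\cos\varphi\, T_i + \sin\varphi \cosh\theta\, b_i\bigr),
\end{align*}
and the key point is that the map $L : T_i \mapsto \cos\varphi\, T_i + \sin\varphi \cosh\theta\, b_i$ is a \emph{single} linear endomorphism of the spacelike quad plane (independent of $i$), since $b_i$ is the fixed $90^\circ$ rotation of $T_i$. In any orthonormal basis of that plane, $L$ takes the matrix form
\begin{align*}
L = \begin{pmatrix} \cos\varphi & -\sin\varphi\cosh\theta \\ \sin\varphi\cosh\theta & \phantom{-}\cos\varphi \end{pmatrix},
\end{align*}
which is the composition of a rotation by $\beta = \arctan(\cosh\theta \tan\varphi)$ and a scaling by $\sqrt{\cos^2\varphi + \sin^2\varphi\cosh^2\theta} = \sqrt{1 + \sin^2\varphi\sinh^2\theta} = \mu$. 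Combined with $\pm(R_i^* + R_{i+1}^*)T_i = P_{i+1}^* - P_i^*$, this yields $Q_{i+1}^\varphi - Q_i^\varphi = \mu R_\beta(P_{i+1}^* - P_i^*)$, the desired similarity with scaling factor $\mu$. The main obstacle I expect is the bookkeeping in the parallelism step: one must verify the sign-alternation from \eqref{eq:assointegral} around each face so that the telescoping coincidence of $n$-coordinates at the four contact points actually occurs; once that is in place, the similarity is a direct linear-algebra computation together with the identification $r = \sinh\theta$.
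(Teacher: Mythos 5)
Your argument is correct and follows essentially the same route as the paper: discard the $n$-component of $T_i^\varphi$ to obtain $U_i^\varphi = T_i\cos\varphi + b_i\cosh\theta\sin\varphi$, recognize $T_i\mapsto U_i^\varphi$ as a single rotation-scaling of the quad plane, and compute $\mu = |U_i^\varphi| = \sqrt{\cos^2\varphi + \cosh^2\theta\sin^2\varphi} = \sqrt{1+r^2\sin^2\varphi}$ using $\sinh\theta = r$. The only difference is that you make explicit two points the paper asserts without computation, namely the telescoping verification that the plane of $C^\varphi$ is parallel to the quad plane and the matrix form of the similarity; both check out.
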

\proof{
	Projecting the rotated vectors $P^\varphi$ onto the plane of $C^\varphi$ gives a scaled and rotated image of $P^*$. However, the scaling and rotation only depend on the angles $\varphi$ and $\theta = \angle(N_i,n)$. These angles are the same for all edges of $Q^\varphi$. Hence, the quad $Q^\varphi$ is similar to $P^*$. The scaling factor can be derived from the quotient of lengths of the orthogonal projection of $T^\varphi$ denoted by $U^\varphi$ and $T^\varphi$.
	We obtain from Equation~\eqref{eq:associatedtangent} that
	\begin{align}
		U^\varphi_i &= T_i \cos \varphi + b_i \cosh \theta \sin \varphi.
	\end{align}
	Consequently,
	\begin{align}
		\mu = |U^\varphi_i| = \sqrt{\cos^2 \varphi + \cosh^2\theta \sin^2\varphi} = \sqrt{1 + r^2 \sin^2 \varphi},
	\end{align}	
	is the scaling factor of the edge lengths (we used that $\sinh \theta = r$). \qed
}

As a consequence of Lemma~\ref{lem:assosimilarity}, the quad $Q^{\varphi}$ has an incircle $D^{\varphi}$ with radius $\mu r^*$, and center $m^\varphi$ which coincides with the center of $C^\varphi$.

As mentioned above, the spheres centered at $P_i^\varphi$ and $P^\varphi_{i+1}$ still touch in $K_i^\varphi$. Let us call the two common isotropic lines of these spheres $G^\varphi_{i,\pm}$.

\begin{lemma} \label{lem:assocc}
	The four isotropic lines $G^\varphi_{1,+}$, $G^\varphi_{2,-}$, $G^\varphi_{3,+}$, $G^\varphi_{4,-}$ are contained in a sphere of radius $\rho_\varphi := \sin \varphi$. The four isotropic lines $G^\varphi_{1,-}$, $G^\varphi_{2,+}$, $G^\varphi_{3,-}$, $G^\varphi_{4,+}$ are also contained in a (generically different) sphere of radius $\rho_\varphi$, see Figure \ref{fig:asso} (left).
\end{lemma}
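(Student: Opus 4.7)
My plan is to exhibit both spheres explicitly and verify the two standard conditions under which a sphere $|x-c|^2 = \rho^2$ contains an isotropic line $K + \R d$: namely $|K - c|^2 = \rho^2$ and $\langle K - c, d\rangle = 0$. I work locally at a single face. From the proof of Lemma~\ref{lem:assosimilarity} the contact circle $C^\varphi$ lies in the plane perpendicular to the timelike normal $n$ with centre $m^\varphi$, and the projection $Q^\varphi$ is Lorentz-similar to the Christoffel-dual quad $P^*$, hence has an incircle of radius $\mu r^*$ centred at $m^\varphi$. At each contact point $K_i^\varphi$ the common tangent plane of the two incident spheres is spanned by $N_i$ and $B_i^\varphi$, so its two isotropic directions are $d_i^\pm := N_i \pm B_i^\varphi$ and $G^\varphi_{i,\pm} = K_i^\varphi + \R d_i^\pm$.

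The candidate sphere centres I propose are
\[
c_\pm^\varphi := m^\varphi \pm r^* n,
\]
each with radius $\sin\varphi$. The structural point is that the sign $\epsilon_i \in \{+,-\}$ in Equation~\eqref{eq:dualquad}, which depends on whether the edge is horizontal or vertical in $\Z^2$ and so alternates around the quad, propagates through the similarity of Lemma~\ref{lem:assosimilarity} to give
\[
K_i^\varphi - m^\varphi = \epsilon_i\, r^* \bigl(-\cosh\theta\sin\varphi\, T_i + \cos\varphi\, b_i\bigr),
\]
up to the orientation convention for the inward normal of $Q^\varphi$. First I would verify the radius condition by expanding in the orthonormal basis $(T_i, b_i, n)$ of signature $(+,+,-)$; using $\sinh\theta = r$ and $r r^* = 1$ this gives
\[
|K_i^\varphi - c_\pm^\varphi|^2 = (r^*)^2(\cosh^2\theta\sin^2\varphi + \cos^2\varphi) - (r^*)^2 = (r^*)^2\sinh^2\theta\sin^2\varphi = \sin^2\varphi,
\]
independently of $\epsilon_i$ and of the sign of the sphere centre.

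Next I would compute the tangency condition $\langle K_i^\varphi - c_+^\varphi, d_i^\sigma\rangle$. Substituting the formulas for $N_i$ and $B_i^\varphi$ from Equations~\eqref{eq:associatedtangent1}--\eqref{eq:associatedtangent2}, the identities $\sin^2\varphi + \cos^2\varphi = 1$ and $\cosh^2\theta - \sinh^2\theta = 1$ should produce an expression of the form
\[
r^*\bigl[\cosh\theta\,(1 + \epsilon_i\sigma) + \sinh\theta\cos\varphi\,(\epsilon_i + \sigma)\bigr],
\]
which vanishes precisely when $\sigma = -\epsilon_i$. With the convention $\epsilon_1 = +, \epsilon_2 = -, \epsilon_3 = +, \epsilon_4 = -$, one of $c_\pm^\varphi$ thus carries the lines of the first alternating pattern in the lemma, while the analogous computation for the other centre produces the opposite alternation.

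The main obstacle is recognising that the alternating labelling is not an accident but is forced by the horizontal/vertical sign $\epsilon_i$ from the Christoffel-dual formula~\eqref{eq:dualquad}, and tracking this sign cleanly through the definitions of the contact points and isotropic directions in the rotated frame of the associated family. Once the sign bookkeeping is in order, the verification reduces to the two displayed computations above.
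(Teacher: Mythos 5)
Your proposal is correct, and it takes a somewhat different (more explicit) route than the paper. The paper projects each isotropic line $G^\varphi_{i,\pm}$ orthogonally to the plane of $C^\varphi$, computes that the projected line has distance $\sin\varphi$ from the centre $m^\varphi$ (via an inner product of the unit normal to $U_i^\varphi$ with the normal of the projected line, scaled by the incircle radius $\mu r^*$), and then deduces the existence of the two spheres from the fact that the isotropic lines are equidistant from both the centre and the axis of $C^\varphi$; the alternating $\pm$ assignment is dispatched in a single closing sentence about the orientation flip of vertical edges under dualization. You instead name the two centres $c_\pm^\varphi = m^\varphi \pm r^* n$ on the axis of $C^\varphi$ and verify directly the two containment conditions $|K_i^\varphi - c^\varphi_\pm|^2 = \sin^2\varphi$ and $\langle K_i^\varphi - c^\varphi_\pm, N_i \pm B_i^\varphi\rangle = 0$ for an isotropic line in a Lorentz sphere. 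I checked your key formulas: $K_i^\varphi - m^\varphi = \epsilon_i r^*(-\cosh\theta\sin\varphi\,T_i + \cos\varphi\,b_i)$ is consistent at $\varphi=0$ with $K_i^* - m^* = \epsilon_i r^* b_i$ (the dual quad's inward normals are $-\epsilon_i b_i$) and propagates correctly through the similarity of Lemma~\ref{lem:assosimilarity}; the radius computation gives $(r^*)^2(\cosh^2\theta\sin^2\varphi + \cos^2\varphi - 1) = (r r^*)^2\sin^2\varphi = \sin^2\varphi$ since $n$ is timelike; and the tangency pairing indeed reduces to $r^*[\cosh\theta(1+\epsilon_i\sigma) + \sinh\theta\cos\varphi(\epsilon_i+\sigma)]$, which vanishes exactly for $\sigma = -\epsilon_i$ (and for the other centre, $\sigma = +\epsilon_i$), and is otherwise strictly positive. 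What your approach buys is a fully explicit identification of the two sphere centres and a clean derivation of the alternating labelling from the horizontal/vertical sign in the Christoffel dual, which the paper's proof leaves implicit; what it costs is that the distance-to-axis interpretation (and the observation that $C^\varphi$ lies on both spheres) is less visible.
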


\proof{
	Note that $G^\varphi_{i,\pm}$ are both orthogonal to $T_i^\varphi$. Thus, the direction vectors of the two isotropic lines are given by
	\begin{align}
		N^\varphi_i \pm B^\varphi_i = \mp T_i \sin \varphi + n (\cosh \theta \pm \sinh \theta \cos \varphi) + b_i (\sinh \theta \pm \cosh \theta \cos \varphi).
	\end{align}
	The projection of the line $G^\varphi_{i,\pm}$ to the plane of the circle $C^\varphi$ is a line that has distance $\rho^\varphi$ to the center $m^\varphi$, where 
	\begin{align}
		\rho^\varphi &= \mu r^* \left< \underset{ \text{normal to } U_i^\varphi }{\underbrace{\frac{-T_i \cosh \theta \sin \varphi + b_i \cos \varphi}{\mu}}}
		,
		\underset{\text{normal of projected } G_{i,\pm}^\varphi}{
			\underbrace{
				\frac{T_i(-\sinh \theta \mp \cosh \theta \cos \varphi) \mp b_i \sin\varphi}{\sqrt{\sin^2 \varphi + (\sinh \theta \pm \cosh \theta \cos \varphi)^2}}
			}
		} \right> \\ 
		&= \frac{r^* \sinh \theta (\cosh \theta \pm \sinh \theta \cos \varphi) \sin \varphi}
		{\sqrt{ (\cosh \theta \pm \sinh \theta \cos \varphi)^2 }}  = r^* \sinh \theta \sin \varphi = \sin \varphi,
	\end{align}
	(independently of $i$), since $\sinh \theta = r$ and $rr^* = 1$. Because the lines $G^\varphi_{i,\pm}$ are isotropic, $\rho^\varphi$ is also the distance of $G^\varphi_{i,\pm}$ to the axis of $C^\varphi$. As the lines are also the same distance of the center of $C^\varphi$, there are two unique Lorentz spheres of radii $\rho^\varphi$ containing the quadruples of isotropic lines of the lemma. The circle $C^\varphi$ is contained in both spheres. Note that this is not the smallest Euclidean circle of the spheres. The change of sign of the generators in one sphere is due to the change of orientation of vertical edges when dualizing.\qed
}

Surprisingly, $\rho^\varphi$ does not depend on $r$ or (equivalently) on $\theta$. Hence, the radii $\rho^\varphi$ are the same for all quads in the associated surface. 

By combining the generalization of null congruences to contact congruences (Definition~\ref{def:contactcongruence}) with Lemma~\ref{lem:assocc}, we obtain the following theorem.

\begin{figure}
	
	\centering
	\begin{minipage}{\linewidth}
		
	\vspace{-.9cm}
	\includegraphics[width=.45\linewidth]{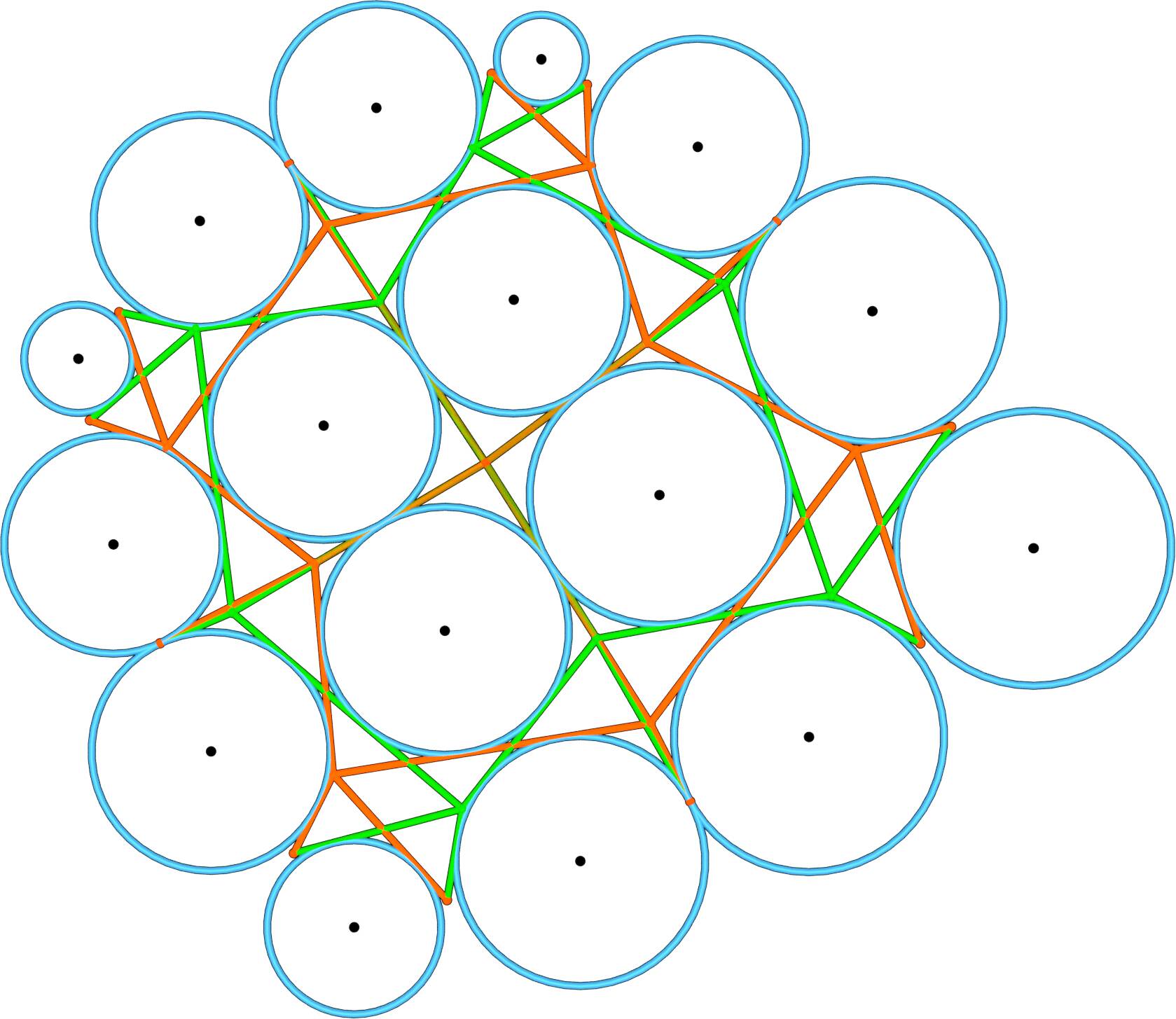}
	\hspace{1cm}
	\includegraphics[width=.45\linewidth]{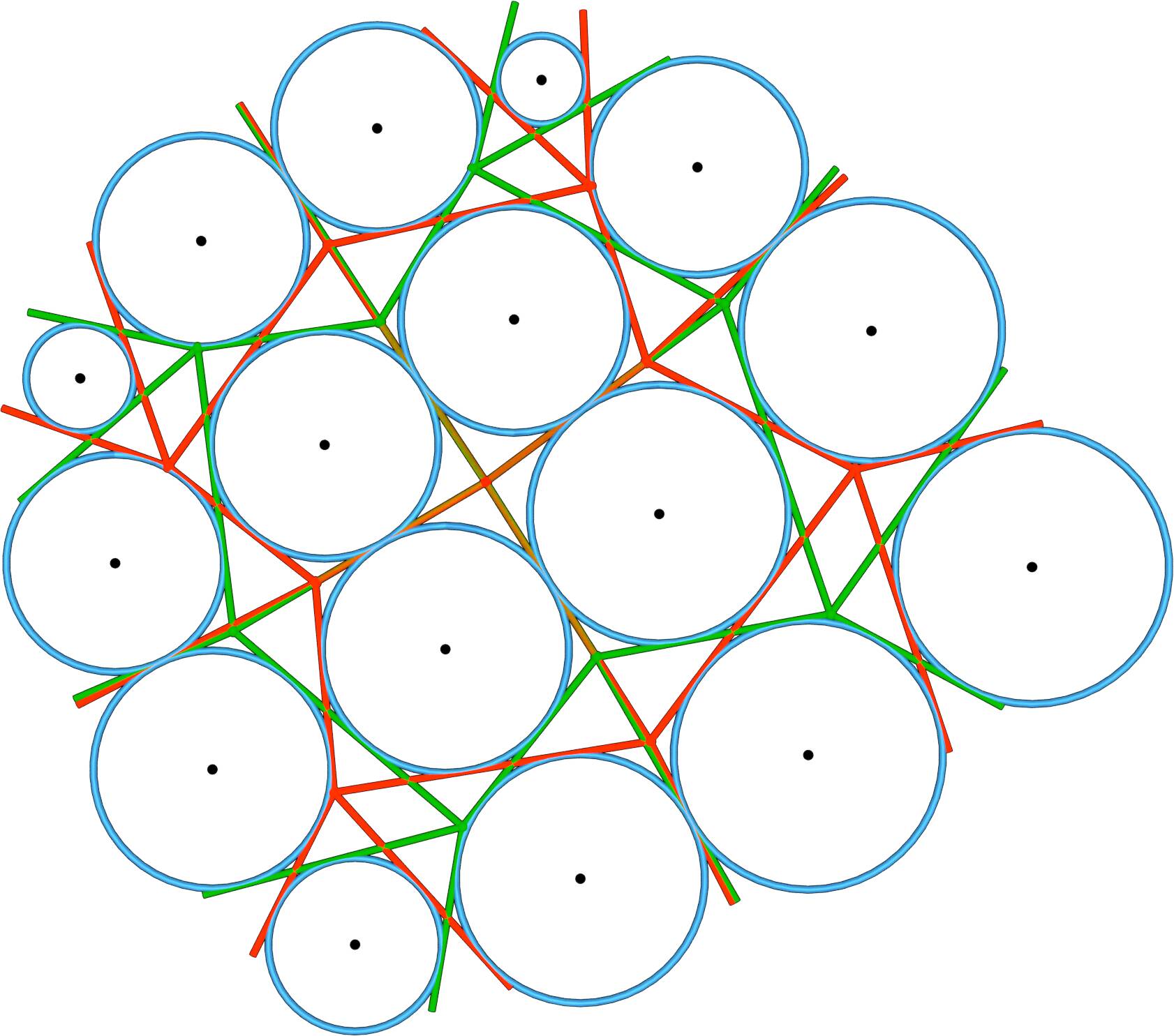}
	\vspace{.2cm}
	\newline
	\includegraphics[width=.45\linewidth]{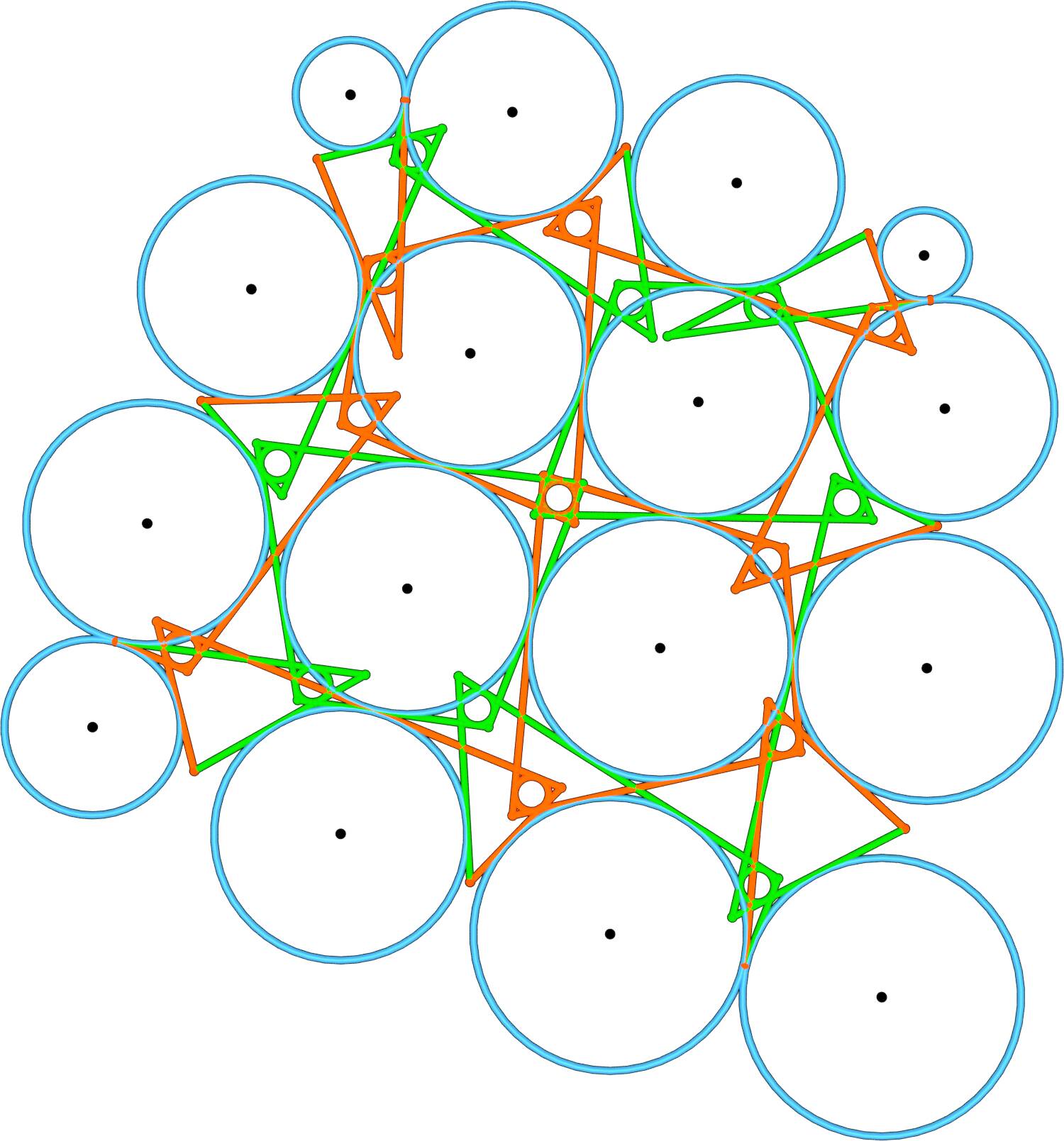}
	\hspace{1cm}
	\includegraphics[width=.45\linewidth]{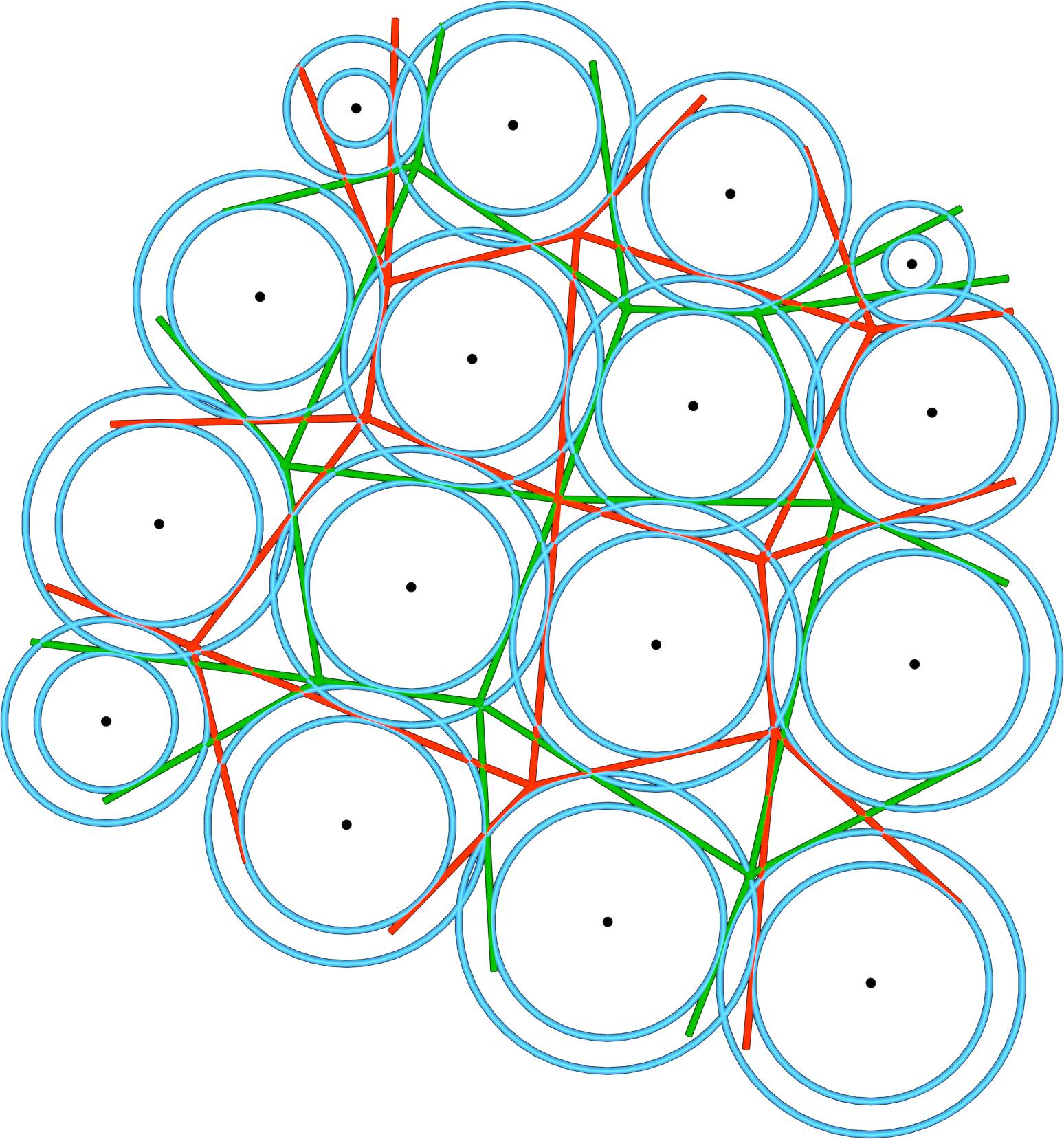}
	\vspace{.2cm}
	\newline
	\includegraphics[width=.45\linewidth]{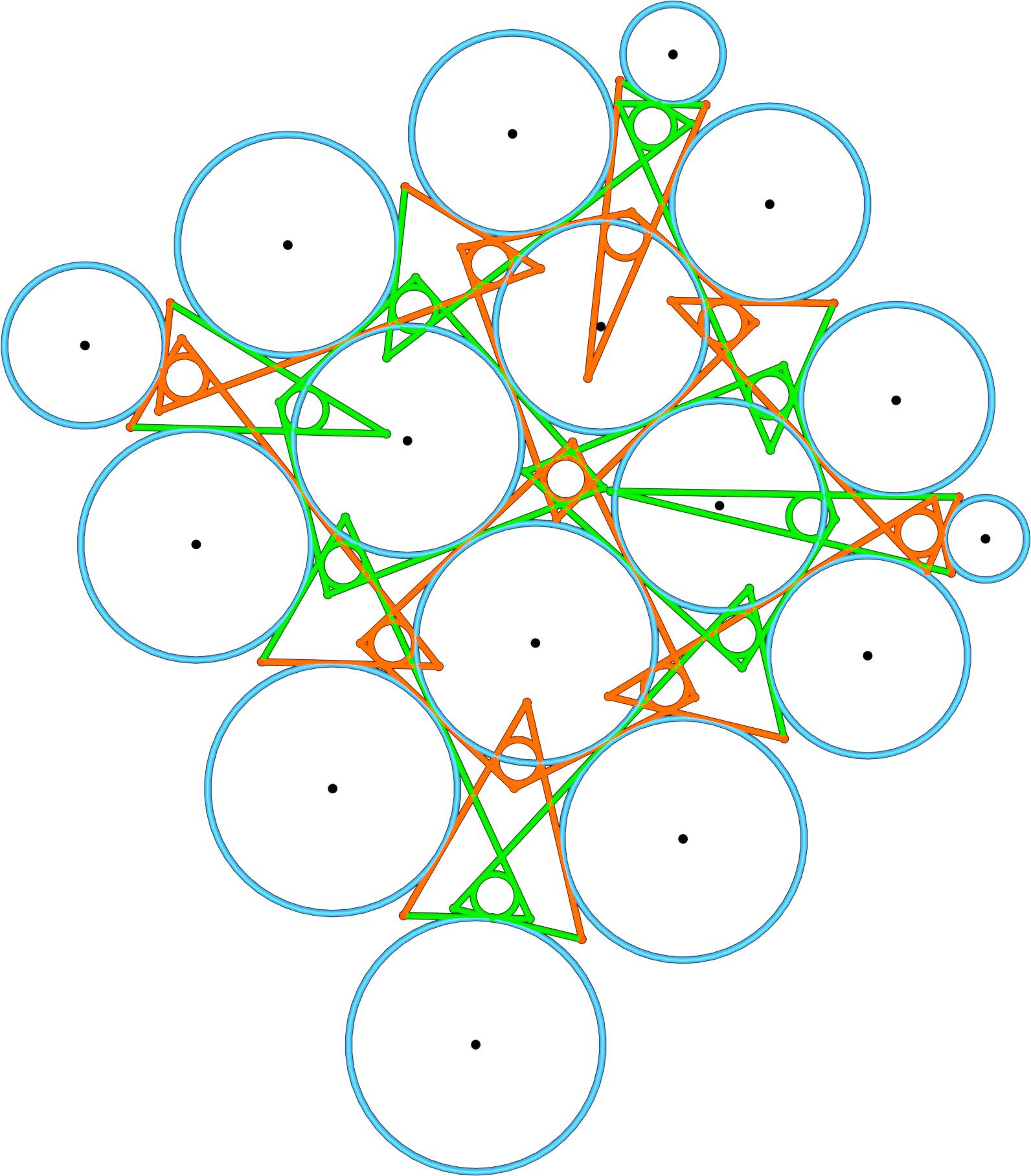}
	\hspace{1cm}
	\includegraphics[width=.45\linewidth]{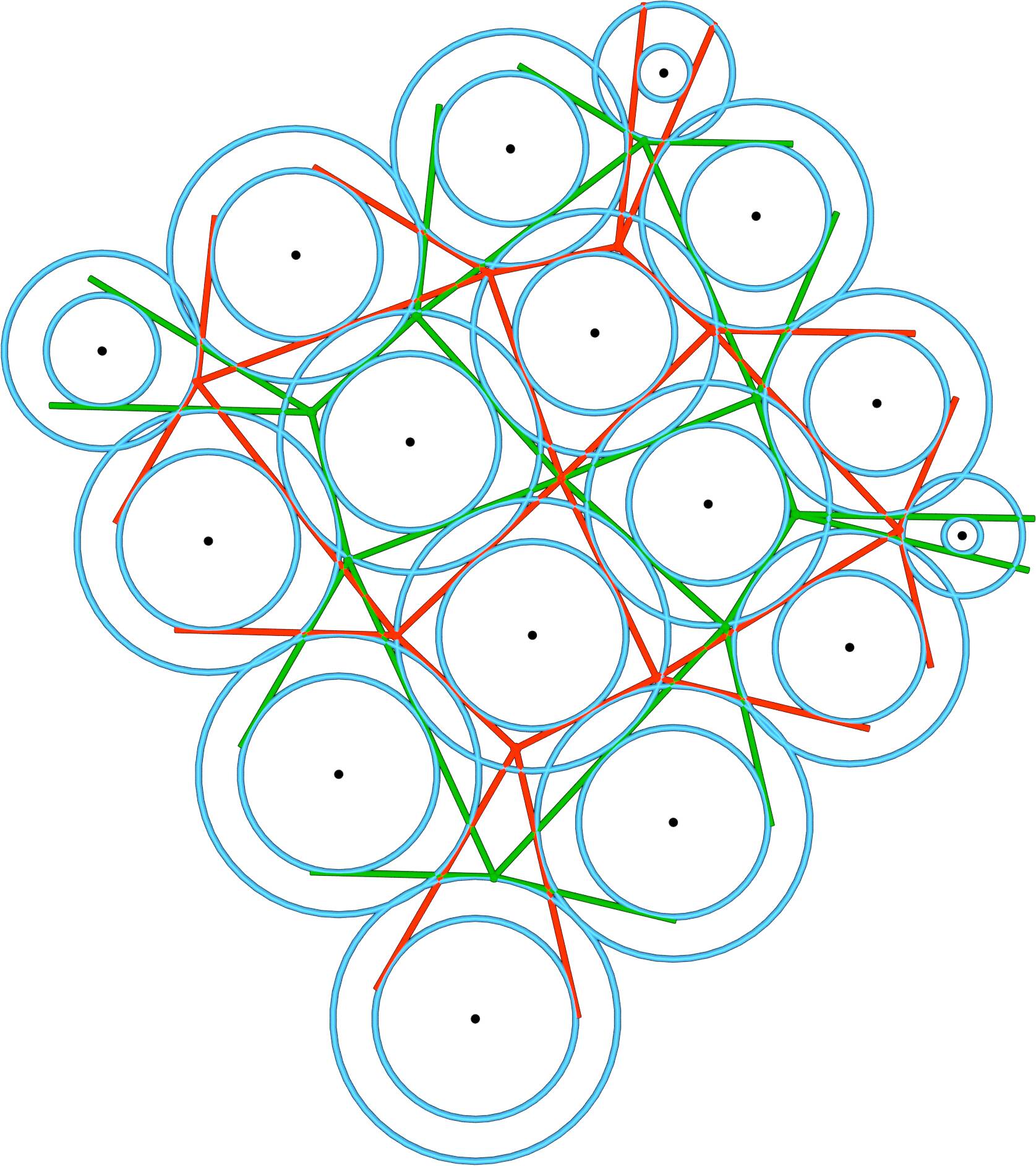}
	\caption{Planar nets obtained from the associated congruences in the associated family of a maximal surface (see Figure \ref{fig:asso}) for the values $\varphi = 0, \frac{\pi}{4}, \frac{\pi}{2}$, from top to bottom. Left: Projections of the isotropic lines and timelike spheres in the associated contact congruences $(\cc^1)^\varphi$ (green) and $(\cc^2)^\varphi$ (orange). All the green and orange circles have radius $\sin \varphi$. Right: The incircular nets obtained as projections of the associated null congruences $(\dot \cc^1)^\varphi$ (green) and $(\dot \cc^2)^\varphi$ (red).}
	\label{fig:icnets}
	\end{minipage}
\end{figure}

\begin{theorem}\label{th:associatedcc}
	Let $\iso$ be a Koebe net and $\iso^\varphi$ a member of the associated family of $\iso^*$ (the Christoffel dual S-maximal surface). 
	Then there are two contact congruences $(\cc^1)^\varphi$, $(\cc^2)^\varphi$ such that $(\ccw^1)^\varphi = (\ccw^2)^\varphi = \isow^\varphi$, which we call \emph{associated congruences}.
\end{theorem}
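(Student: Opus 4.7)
Since $(\ccw^{1,2})^\varphi = \isow^\varphi$ is prescribed, the content of the theorem is to construct the two face isotropic line maps $(\ccf^{1,2})^\varphi\colon F(\Z^2)\to\oli_0(\lor)$ and then to verify the axiom of Definition~\ref{def:contactcongruence}: at each face $f$ incident to the two white vertices $w,w'$, the chosen isotropic line must be in oriented contact with both $\isow^\varphi(w)$ and $\isow^\varphi(w')$.

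The preceding calculation has already done all the local work. From Equation~\eqref{eq:assointegral} one reads $\isomw^\varphi(w')-\isomw^\varphi(w)=(R_{w'}^{-1}+R_w^{-1})\,T^\varphi$ with $T^\varphi$ a Lorentz-unit spacelike vector, so the spheres $\isow^\varphi(w)$ and $\isow^\varphi(w')$ are in oriented contact at a single point $K^\varphi_f$. Their common tangent plane at $K^\varphi_f$ is timelike and therefore meets the light cone in exactly two isotropic lines; these are precisely the lines $G^\varphi_{f,+}$ and $G^\varphi_{f,-}$ that appear in Lemma~\ref{lem:assocc} when $f$ is viewed as one of the four edges of the quad around an adjacent black vertex. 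Either of them is tangent to, and in oriented contact with, both $\isow^\varphi(w)$ and $\isow^\varphi(w')$, so the contact axiom is automatic at each face for either local choice.

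To fix one of the two choices globally I will use the canonical bipartite 2-coloring of $F(\Z^2)$: color a face by the parity of the coordinate sum of its center, so that the four faces meeting at any black vertex alternate in color in cyclic order. Set $(\ccf^1)^\varphi(f)=G^\varphi_{f,+}$ on one color class and $G^\varphi_{f,-}$ on the other, and interchange the two assignments to obtain $(\ccf^2)^\varphi$. Both maps are manifestly well-defined, and the contact-congruence axioms hold face by face by the previous paragraph.

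The substantive (and only non-formal) obstacle is to see that this particular alternating pattern is geometrically distinguished rather than arbitrary — a priori any independent per-face selection would satisfy the bare definition. This is exactly what Lemma~\ref{lem:assocc} certifies: under the alternating pattern inherited from the 2-coloring, the four lines selected by $(\ccf^{1,2})^\varphi$ around every black vertex are the quadruples of the lemma and hence lie on a common Lorentz sphere of radius $\sin\varphi$, which is the additional structure visible in Figure~\ref{fig:asso}. Invoking the lemma therefore both produces the two congruences and justifies calling them \emph{the} associated congruences, since at $\varphi=0$ they reduce, by Theorem~\ref{th:isocongruencetoisonet}, to the pair of null congruences of the underlying Koebe congruence.
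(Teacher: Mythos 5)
Your argument follows the paper's proof in its essentials: both rest entirely on Lemma~\ref{lem:assocc}, which supplies the two admissible spheres (equivalently, the two admissible alternating quadruples of isotropic lines) at each black vertex, and then on a globally consistent choice of one quadruple per black vertex. The step you should not treat as formal bookkeeping is precisely that global selection. The labels $G^\varphi_{f,+}$, $G^\varphi_{f,-}$ are not intrinsic to the face $f$: they are defined through the frame $(T_i,N_i,B_i)$, and $B_i=T_i\times N_i$ flips when the edge orientation flips --- the paper itself notes that the sign alternation in Lemma~\ref{lem:assocc} is an artifact of the orientation change of vertical edges under dualization. So the rule ``take $G_{f,+}$ on one parity class of faces and $G_{f,-}$ on the other'' is not well-defined until a global frame convention is fixed, and once it is, you must still check that around every black vertex your parity pattern coincides, in that vertex's \emph{local} labels, with one of the lemma's two quadruples $\{G_{1,+},G_{2,-},G_{3,+},G_{4,-}\}$, $\{G_{1,-},G_{2,+},G_{3,-},G_{4,+}\}$ rather than with $\{G_{1,+},G_{2,+},G_{3,+},G_{4,+}\}$, which does \emph{not} lie on a common sphere. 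The paper instead propagates the choice from one base black vertex, which is forced edge by edge on the black sublattice and needs only a (short, omitted) consistency check around elementary cycles; your explicit formula would be cleaner if verified, but as written it asserts the compatibility that is the only nontrivial content beyond the lemma. A secondary point: under the intended definition of a contact congruence the sphere at each black vertex is part of the data, so an arbitrary per-face selection does \emph{not} ``satisfy the bare definition''; the existence of that sphere is exactly the constraint that forces the alternating choice, and Lemma~\ref{lem:assocc} is what makes the congruences exist, not merely what makes them canonical. (Also, at $\varphi=0$ the associated congruences reduce to the two null congruences of the maximal surface $\iso^*$, not of the Koebe congruence.)
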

\begin{proof}
	By Lemma~\ref{lem:assocc}, for each black vertex $b$ there are exactly two spheres per face that are in oriented contact with the four spheres $\isow^\varphi(w_i)$ of adjacent white vertices $w_i$. Choosing one of the two spheres for some black vertex $b_0$ and then all other spheres consistently proves the claim. 
\end{proof}

Additionally, it is possible to define a new contact congruence $\dot\cc$ such that $\odot\dot\cc = \ccm$, and such that the radii of all spheres of $\dot\cc_\bullet$ are zero, while at white vertices the radii satisfy
\begin{align}
	\dot R(v) = R(v) - \rho_\varphi. \label{eq:radiusshift}
\end{align}

Here, $R$ and $\dot R$ denote the oriented radii of the oriented spheres, where the orientation of an oriented sphere is encoded in the sign of its radius. 
In this sense, the transformation in Equation~\eqref{eq:radiusshift} is just a Laguerre (offset-) transformation from the point of view of Laguerre geometry, see \cite{admpsiso}. From Equation~\eqref{eq:radiusshift} we obtain the following corollary.

\begin{corollary}\label{cor:assonull}
	Let $\cc^\varphi$ be an associated congruence. Then there is a null congruence $\dot \cc^\varphi$ with $\odot\dot\cc^\varphi = \ccm^\varphi$, see Figure~\ref{fig:asso} (right).
\end{corollary}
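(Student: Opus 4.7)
The construction of $\dot\cc^\varphi$ is already prescribed in the paragraph preceding the corollary: at each white vertex $v$, the sphere $\dot\cc^\varphi_\circ(v)$ is centered at $\ccm^\varphi_\circ(v)$ with oriented radius $R(v) - \rho_\varphi$, while at each black vertex $b$ the sphere $\dot\cc^\varphi_\bullet(b)$ is centered at $\ccm^\varphi_\bullet(b)$ with radius $0$. By definition then, $\odot\dot\cc^\varphi = \ccm^\varphi$ is immediate, and the $\dot\cc^\varphi_\bullet(b)$ are null-spheres as required by Definition~\ref{def:nullcongruence}. The nontrivial point is to exhibit the face data $\dot\cc^\varphi_\square: F(\Z^2) \to \oli_0(\lor)$ and verify the oriented contact relations.

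The plan is to obtain $\dot\cc^\varphi_\square(f)$ as the parallel translate of $\cc^\varphi_\square(f)$ along its own isotropic direction by the appropriate signed amount controlled by $\rho_\varphi$, so that the whole construction is the Laguerre offset transformation of $\cc^\varphi$ by the constant $-\rho_\varphi$. The key observation making this uniform is Lemma~\ref{lem:assocc}: the radii of \emph{all} black spheres of $\cc^\varphi$ equal the same constant $\rho_\varphi = \sin\varphi$, so a single global offset simultaneously reduces every black radius to $0$. Since Laguerre transformations act on the space of oriented cycles (timelike spheres, null-spheres, and oriented isotropic lines together) and preserve oriented contact -- a fundamental fact of Lorentz--Laguerre geometry developed in \cite{admpsiso} -- every oriented contact incidence of $\cc^\varphi$ carries over to $\dot\cc^\varphi$.

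Concretely, I would carry out the verification as follows. First, check that the shifted white spheres $\dot\cc^\varphi_\circ(v)$ are still in $\osp_-(\lor)$ (they are, since timelike-ness is determined by the plane type of the center, which is unchanged, and the radius stays finite and real). Next, check at each incident pair $(v,f)$ that $\dot\cc^\varphi_\square(f)$ is an oriented isotropic line tangent to the shifted $\dot\cc^\varphi_\circ(v)$; this follows from the Laguerre-invariance of the signed tangential distance. Finally, check at each incident pair $(b,f)$ that $\dot\cc^\varphi_\square(f)$ is a generator of the null-cone $\dot\cc^\varphi_\bullet(b)$, which is the Laguerre-offset limit of the contact relation between $\cc^\varphi_\bullet(b)$ and $\cc^\varphi_\square(f)$ as the black radius is shrunk to zero.

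The main obstacle will be making the Laguerre-offset argument precise on the face data: isotropic lines are the degenerate cycles where radius and center collapse, so their transformation law under offsetting requires a small separate calculation in coordinates (essentially a translation along the isotropic direction proportional to $\rho_\varphi$). Once this transformation law is fixed and matched with the contact identities in Lemma~\ref{lem:assocc}, the three incidences above are immediate, and $\dot\cc^\varphi$ satisfies Definition~\ref{def:nullcongruence}. This proves the corollary.
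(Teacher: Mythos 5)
Your proposal matches the paper's argument: the paper obtains the corollary in exactly the same way, by noting that Lemma~\ref{lem:assocc} gives \emph{every} black sphere of $\cc^\varphi$ the same radius $\rho_\varphi=\sin\varphi$, so the radius shift of Equation~\eqref{eq:radiusshift} is a single Laguerre offset transformation (details deferred to \cite{admpsiso}) that fixes all centers, turns every black sphere into a null-sphere, and preserves oriented contact. One small correction to your parenthetical: under the offset an oriented isotropic line is \emph{not} translated along its own isotropic direction (that would leave it fixed, and the same line cannot be a ruling of a concentric timelike sphere of a different radius); it is translated by $\rho_\varphi$ times the spacelike unit vector in $v^\perp$ transverse to the direction $v$ of the line, i.e.\ it moves within its tangent plane orthogonally to itself.
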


In particular, this means that for every member $\iso^\varphi$ of the associated family of a maximal surface there are two corresponding null congruences $(\dot \cc^1)^\varphi$, $(\dot \cc^2)^\varphi$.

\begin{remark} \label{rem:assocp}
	Corollary~\ref{cor:assonull} implies that every maximal incircular net $\cp$ also comes with an associated family of two incircular nets $(\cp^1)^\varphi$, $(\cp^2)^\varphi$ that are the projections of $(\dot \cc^1)^\varphi$ and $(\dot \cc^2)^\varphi$, see Figure \ref{fig:icnets} (right). These are not isothermic incircular nets. Instead, $(\cp^1)^\varphi$ has the property that around each black vertex $b$ the other tangents (as in Definition~\ref{def:isocp}) are in oriented contact with an oriented circle of radius $2\sin\varphi$ and center $(\cpm^2)^\varphi(b)$, and analogously for $(\cp^2)^\varphi$. It is not clear if this is a characterizing property for incircular nets in an associated family. This is in part because there currently is also no characterization of S-isothermic nets that are in an associated family. 
	The projections of the contact congruences $(\cc^1)^\varphi$ and $(\cc^2)^\varphi$ were introduced in \cite{admpsiso} as so called \emph{cycle patterns}  and are shown in Figure \ref{fig:icnets} (left).
\end{remark}

\begin{remark}\label{rem:miquel}
	In \cite{admpsiso} we also discussed Miquel dynamics applied to contact congruences using some Lorentz Lie sphere geometry. With the arguments developed there it is possible to obtain a calculation-free proof of Theorem~\ref{th:associatedcc} as follows: the four oriented timelike Lorentz spheres that are cyclically in contact span a 3-space of signature $\si{++--}$ in the Lie lift. Therefore the polar complement of this span is a line of signature $\si{+-}$ which intersects the Lie quadric twice. These two intersection points correspond to two oriented timelike spheres that are in oriented contact with the original four oriented timelike spheres. However, we did not obtain a proof of the constant radii without the calculations above. Moreover, it is not hard to see that the two contact congruences $\cc^1,\cc^2$ in Theorem~\ref{th:associatedcc} are actually related by one step of Miquel dynamics. Furthermore, due to Corollary~\ref{cor:assonull} it follows that iterated Miquel dynamics applied to associated congruences produces the sequence
	\begin{align}
		\dots \leftrightarrow \ccm^\varphi \leftrightarrow \ccm^\varphi \leftrightarrow \miq{\ccm^\varphi} \leftrightarrow \miq{\ccm^\varphi} \leftrightarrow \ccm^\varphi \leftrightarrow \ccm^\varphi \leftrightarrow \miq{\ccm^\varphi} \leftrightarrow \miq{\ccm^\varphi} \leftrightarrow \dots
	\end{align}
	This is the same behaviour as we showed in \cite{admpsiso} for isothermic congruences, except that for isothermic congruences it holds not just for the centers but also for the radii. It is currently unclear whether this type of Miquel sequence is characteristic for associated congruences. 
\end{remark}


\section{The \texorpdfstring{$X$}{X}-variables in the associated family}  \label{sec:xvariables} \label{sec:assox}

We begin by considering a vertex star of $\isow$ in contrast to the quad of $\isow$ we considered in Section~\ref{sec:associated}. Let $P_0,P_1,\dots,P_4$ be five points such that the four edges $P_0P_i$ for $i =1,2,3,4$ are tangent to $\unip$. Let $T_1, T_2, T_3, T_4$ be the corresponding unit edge vectors, $R_i$ the radii of the spheres at the vertices, so that 
\begin{align}
	P_{i} - P_0 = (R_{i} + R_0) T_i.
\end{align}
Let $K_i$ be the touching point on the edge $P_0P_{i}$, so that $K_i - P_0 = R_0 T_i$. 

As before, the dual points are defined by
\begin{align}
	P^*_{i} - P^*_0 = \pm (R_{i}^{-1} + R_0^{-1}) T_i = (R^*_{i} + R^*_0) T^*_i,
\end{align}
where the sign is $+$ for  horizontal edges and $-$ for vertical edges. The dual contact points satisfy $K^*_i - P_0^* = R^*_0T_i^*$. 

The dual points in the vertex star of the associated family are obtained by integrating
\begin{align}
	P^\varphi_{i} - P^\varphi_0 = \pm (R^*_{i} + R^*_0) T_i^\varphi \label{eq:Xassointegral}
\end{align}
with $T_i^\varphi$  determined by \eqref{eq:associatedtangent}. 

Note that the face normal $n$ in \eqref{eq:associatedtangent} is not the same for the unit edge vectors $T^\varphi_1, T^\varphi_2, T^\varphi_3, T^\varphi_4$ of the vertex star. One can derive formulas for $T^\varphi_i$, $N^\varphi_i$ and $B^\varphi_i$, analogous to \eqref{eq:associatedtangent}, \eqref{eq:associatedtangent1} and \eqref{eq:associatedtangent2}, for a vertex star of $\isow$. Instead of the face normal(s) $n$, one considers the vertex normal $n_0$ at $P_0$, given by the connection of the center of $\unip$ and $P_0$. Then one can express $T^\varphi_i$, $N^\varphi_i$ and $B^\varphi_i$ in the orthonormal basis $(t_i, n_0, B_i)$ where $t_i$ is given by $t_i := n_0 \times B_i$ so that
	\begin{align}
		T_i^\varphi &= t_i \cosh \theta \cos \varphi + n_0 \sinh \theta \cos \varphi + B_i \sin \varphi,\\
	N_i^\varphi &=  t_i \sinh \theta +  n_0 \cosh \theta, \\
		B_i^\varphi &= -t_i \cosh \theta \sin \varphi - n_0 \sinh \theta \sin \varphi + B_i \cos \varphi.
	\end{align}
Here $\theta$ is the angle that rotates $(T_i, N_i, B_i)$ onto $(t_i, n_0, B_i)$ and is given by $\tanh{\Theta} = R_0$, in particular it independent of $i$.

The contact points $K^\varphi_i$ of the surface in the associated family are given by
\begin{align}
	K^\varphi_i = \pm T_i^\varphi R_0^*,
\end{align}
where the sign depends on the type of edge.
The direction vectors of the two isotropic lines $G^\varphi_{i,\pm}$ at $K^\varphi_i$ are again given by $N^\varphi_i \pm B^\varphi_i$. 
The isotropic line $G^\varphi_{i,+}$ through $K^\varphi_i$ intersects the isotropic line $G^\varphi_{i+1,+}$ through $K^\varphi_{i+1}$ in a point $Y_i$. Indeed, they lie in two touching timelike spheres, the sphere of radius $R^*_0$ centered at $P_0^*$, and the sphere of radius $\sin \varphi$ associated to the common adjacent face (see Figure \protect{\ref{fig:asso}}, left), and therefore must intersect.

Let $\Pi_0$ be the tangent plane at $P_0^*$, that is the plane orthogonal to $n_0$ passing through $P_0^*$.

\begin{lemma}\label{lem:zdistance}
  Let $Z_i$ be the projection of the intersection point $Y_i$ of consecutive isotropic lines onto the plane $\Pi_0$. The distance of $Z_i$ to  $P_0^*$ is independent of $\varphi$. In particular, it is given by
	\begin{align*}
		|Z_i - P_0^*| = \frac{R^*_0}{\cos(\alpha/2)},
	\end{align*}
	where $\alpha$ is the angle between $t_i$ and $t_{i+1}$. 
\end{lemma}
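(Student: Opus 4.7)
The plan is to reduce to a Euclidean $2$D intersection calculation in the spacelike plane $\Pi_0 = n_0^\perp$, then exploit the rotational symmetry in $i$ via complex arithmetic. Since $(t_i,n_0,B_i)$ is Lorentz-orthonormal with $n_0$ timelike of Lorentz square $-1$, the orthogonal projection onto $\Pi_0$ simply discards the $n_0$-component. Applying this to $K_i^\varphi - P_0^* = \pm R_0^* T_i^\varphi$ and to the isotropic direction $N_i^\varphi + B_i^\varphi$ of $G_{i,+}^\varphi$ gives
\begin{align*}
K_i' &:= \mathrm{proj}(K_i^\varphi) - P_0^* = \pm R_0^*(A\, t_i + B\, B_i),\\
L_i  &:= \mathrm{proj}(N_i^\varphi + B_i^\varphi) = C\, t_i + D\, B_i,
\end{align*}
with $A = \cosh\theta\cos\varphi$, $B = \sin\varphi$, $C = \sinh\theta - \cosh\theta\sin\varphi$, $D = \cos\varphi$. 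Because $\Pi_0$ is spacelike, the ambient Lorentz distance restricts to the usual Euclidean distance there, so the distance $|Z_i - P_0^*|$ can be computed as a planar Euclidean distance.

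I would then identify $\Pi_0$ with $\mathbb{C}$ by choosing orthonormal coordinates in which $t_i \leftrightarrow e^{i\beta_i}$ and $B_i \leftrightarrow ie^{i\beta_i}$, and set $\omega = e^{i\alpha}$. The two projected lines read $\ell_j' : z = \varepsilon_j R_0^* \omega^{j-i}(A+iB) + s\,\omega^{j-i}(C+iD)$ for $j \in \{i,i+1\}$, where $\varepsilon_j \in \{\pm 1\}$ encodes the Christoffel-dual sign convention (horizontal vs.\ vertical edges). Equating the two parameterizations gives
\[
r'\omega - r = R_0^*\,(\varepsilon_i - \varepsilon_{i+1}\omega)\,\frac{A+iB}{C+iD}.
\]
The central algebraic input is the identity
\[
\frac{A+iB}{C+iD} = \frac{\sinh\theta\cos\varphi - i}{\cosh\theta - \sinh\theta\sin\varphi},
\]
which I would prove by computing $(A+iB)(C-iD) = (\cosh\theta-\sinh\theta\sin\varphi)(\sinh\theta\cos\varphi - i)$ and $|C+iD|^2 = (\cosh\theta-\sinh\theta\sin\varphi)^2$. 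This concentrates all the $\varphi$-dependence into the single positive scalar $\lambda := \cosh\theta - \sinh\theta\sin\varphi$.

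The rest is a mechanical simplification. Because the Christoffel-dual signs alternate on the two consecutive edges at $P_0$, the prefactor $\varepsilon_i - \varepsilon_{i+1}\omega$ takes the form $1 + \omega$ and thus has modulus $2\cos(\alpha/2)$ (once the orientation of $\alpha$ is matched to the cyclic order of the dual edges at $P_0^*$). Splitting the complex equation into real and imaginary parts I can solve for $r$, obtaining $r = -(R_0^*/\lambda)(\sinh\theta\cos\varphi + \cot(\alpha/2))$, and substituting this into
\[
|Z_i - P_0^*|^2 = R_0^{*2}\bigl(1 + \sinh^2\theta\cos^2\varphi\bigr) + 2rR_0^*\,\sinh\theta\cos\varphi\,\lambda + r^2\lambda^2,
\]
the factors of $\lambda$ cancel and the cross-terms collapse via $1 + \cot^2(\alpha/2) = 1/\sin^2(\alpha/2) = 1/\cos^2(\alpha/2)$ after the angle convention is accounted for. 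Every trace of $\theta$ and $\varphi$ disappears, leaving $R_0^{*2}/\cos^2(\alpha/2)$. The main obstacle is precisely this last bookkeeping step: keeping the alternating signs $\varepsilon_j$ and the cyclic ordering of the projected edge directions at $P_0^*$ straight, so that the prefactor is $2\cos(\alpha/2)e^{i\alpha/2}$ and the final denominator is $\cos(\alpha/2)$ rather than some other trigonometric expression.
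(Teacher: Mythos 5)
Your overall strategy---project to the spacelike plane $\Pi_0$ by discarding the $n_0$-component, write the two projected isotropic lines explicitly, and intersect them by complex arithmetic---is sound, and your intermediate algebra checks out: the identity $\frac{A+iB}{C+iD}=\frac{\sinh\theta\cos\varphi-i}{\cosh\theta-\sinh\theta\sin\varphi}$ is correct, as are your value of $r$ and your expansion of $|Z_i-P_0^*|^2$. This is a more computational route than the paper's, which argues synthetically: an isotropic line lying on the vertex sphere meets $\Pi_0$ in a point of the circle of radius $R_0^*$ about $P_0^*$ and projects to the tangent of that circle at that point; the angle between consecutive projected tangents is $\varphi$-independent because the coefficients of the direction in the frame $(t_i,B_i)$ are the same for every $i$; and the distance from the centre to the intersection of two tangents of a circle is elementary. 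Your computation reproves the tangency implicitly (the distance from $P_0^*$ to each projected line comes out as $R_0^*$), but pays for it with the sign bookkeeping you defer to the end.

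That bookkeeping is where a genuine gap sits, and it is not the harmless ``convention'' you make it out to be. Running your own formulas with $\omega=e^{i\alpha}$ the rotation taking $(t_i,B_i)$ to $(t_{i+1},B_{i+1})$ (so $\alpha=\angle(t_i,t_{i+1})$, exactly as in the lemma statement) and with the alternating Christoffel signs $\varepsilon_{i+1}=-\varepsilon_i$, one lands on $|Z_i-P_0^*|^2=R_0^{*2}\bigl(1+\cot^2(\alpha/2)\bigr)=R_0^{*2}/\sin^2(\alpha/2)$. The chain $1/\sin^2(\alpha/2)=1/\cos^2(\alpha/2)$ that you invoke is simply false; what is true is $1/\sin^2(\alpha/2)=1/\cos^2(\alpha'/2)$ with $\alpha'=\pi-\alpha$ the angle between the \emph{dual} edge directions $\varepsilon_i t_i$ and $\varepsilon_{i+1}t_{i+1}$. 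Equivalently, the central angle $\gamma$ between the two tangent points satisfies $\cos\gamma=\varepsilon_i\varepsilon_{i+1}\cos\alpha$, and it is $R_0^*/\cos(\gamma/2)$ that the tangent-line geometry produces. So your argument does establish the substantive claim---the distance is independent of $\varphi$ and $\theta$, since neither survives in $R_0^{*2}\bigl(1+\cot^2(\cdot)\bigr)$---but to obtain the stated closed form you must still prove that the lemma's $\alpha$ coincides with the supplement of your rotation angle (i.e.\ identify which of the two supplementary angles at $Z_i$ is meant), rather than asserting an identity that cannot hold for both. Make the sign pattern $\varepsilon_i-\varepsilon_{i+1}\omega$ and the resulting $\gamma$ explicit; that single step is what decides between $\cos(\alpha/2)$ and $\sin(\alpha/2)$ in the denominator.
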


\proof{
	The intersection of $\Pi_0$ with the sphere around $P_0^*$ is a circle of Radius $R_0^*$. An isotropic line contained in the sphere intersects this circle in a point $V$. Since the isotropic line lies in the tangent plane at $V$ it projects to the tangent of the circle at $V$. Obviously, the distance of $V$ to $P_0^*$ is $R_0^*$.
	Hence, the projections of the isotropic lines are tangent to a circle with center $P_0^*$ and radius $R_0^*$. The distance of their intersection point $Z_i$ only depends on the angle the projected tangents form which is given by $\alpha$.
	To show that the angle $\alpha$ is indeed also independent of $\varphi$, we consider the projections of the isotropic lines $G^\varphi_{i,\pm}$ to $\Pi_0$, which in the orthonormal basis $(t_i, n_0, B_i)$ have the direction vectors
	\begin{align}
		t_i(\sinh \theta \mp \cosh \theta \sin \varphi) \pm B_i \cos \varphi. \label{eq:projectedtangentdirection}
	\end{align}
Thus the angle $\alpha$ is simply the angle $\alpha = \angle(t_i,t_{i+1}) = \angle(B_i, B_{i+1})$.
	Consequently, the distance is given by $R_0^* / \cos(\alpha/2)$.
	\qed
}

We are ready to prove the final theorem.

\begin{theorem}\label{th:xasso}
	The $X$-variables in the associated family of null congruences $\dot\cc^\varphi$ of a maximal congruence $\cc$ do not depend on $\varphi$.
\end{theorem}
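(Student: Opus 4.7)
By Lemma~\ref{lem:xcyclo} applied to $\dot\cc^\varphi$, it suffices to show that the ratio
\[ |\ccm^\varphi(b_1)-\ccm^\varphi(b_3)|^2_\lor \big/ |\ccm^\varphi(b_2)-\ccm^\varphi(b_4)|^2_\lor \]
is independent of $\varphi$. Fix the central white vertex $w$ with sphere center $P_0^\varphi$ and radius $R_0^*$; its four adjacent black vertices $b_i$ correspond to the four $\Zw^2$-faces incident to $w$. By Lemma~\ref{lem:assocc}, each $b_i$ carries a sphere of radius $\sin\varphi$ centered at $\ccm^\varphi(b_i)$, and two of its generators, namely the isotropic lines through $K_i^\varphi$ and $K_{i+1}^\varphi$, are also generators of the white sphere at $P_0^\varphi$. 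Consequently these two spheres are tangent, touching at the intersection point $Y_i$ introduced before Lemma~\ref{lem:zdistance}. Collinearity of sphere centers and touching point gives
\[ \ccm^\varphi(b_i) - P_0^\varphi \;=\; \kappa_\varphi\,(Y_i - P_0^\varphi), \qquad \kappa_\varphi = \frac{R_0^* \pm \sin\varphi}{R_0^*}, \]
with $\kappa_\varphi$ independent of $i$ (by consistency of the contact congruence chosen in Theorem~\ref{th:associatedcc}). Hence $|\ccm^\varphi(b_i) - \ccm^\varphi(b_j)|^2_\lor = \kappa_\varphi^2\,|Y_i - Y_j|^2_\lor$, so the task reduces to showing that $|Y_i - Y_j|^2_\lor$ is independent of $\varphi$.

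Decompose $Y_i - P_0^\varphi = (Z_i - P_0^\varphi) + h_i\,n_0$, where $Z_i\in\Pi_0$ is the orthogonal projection of $Y_i$ and $n_0$ is the timelike vertex normal at $P_0$. Because $\Pi_0$ is spacelike, $\langle n_0,n_0\rangle_\lor = -1$, and $Y_i$ lies on the timelike sphere of radius $R_0^*$ centered at $P_0^\varphi$, the Lorentz norm condition reads $(R_0^*)^2 = |Z_i-P_0^\varphi|^2 - h_i^2$. Lemma~\ref{lem:zdistance} gives $|Z_i - P_0^\varphi| = R_0^*/\cos(\alpha_i/2)$ with $\alpha_i = \angle(t_i, t_{i+1})$, independent of $\varphi$, so $h_i^2 = (R_0^*)^2\tan^2(\alpha_i/2)$ is $\varphi$-independent, and by continuity so is $h_i$ itself.

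It remains to show that $\{Z_i^\varphi\}$ is a rigid rotation of $\{Z_i^0\}$ around $P_0^\varphi$ in $\Pi_0$. From the proof of Lemma~\ref{lem:zdistance}, the projected isotropic line at $K_i^\varphi$ has direction $t_i(\sinh\theta - \cosh\theta\sin\varphi) + B_i\cos\varphi$ and is tangent to the circle of radius $R_0^*$ in $\Pi_0$ at a foot $V_i^\varphi$. Writing $(t_i, B_i)$ as the rotation by an angle $\beta_i$ (independent of $\varphi$) of a fixed orthonormal frame of $\Pi_0$, a direct computation places $V_i^\varphi$ at angular position $\beta_i + \omega(\varphi)$ around $P_0^\varphi$, where $\omega(\varphi) = \arg\!\bigl(-\cos\varphi + i(\sinh\theta - \cosh\theta\sin\varphi)\bigr)$. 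The crucial point is that $\theta$, characterized by $\tanh\theta = R_0$, depends only on $w$ and not on the edge index $i$, so the offset $\omega(\varphi) - \omega(0)$ is common to all four edges. Therefore $(V_i^\varphi)_i$, and hence also the tuple of consecutive intersections $(Z_i^\varphi)_i$, is obtained from the $\varphi = 0$ configuration by a single Euclidean rotation of $\Pi_0$ about $P_0^\varphi$. This rotation fixes the $n_0$-axis and extends to a Lorentz isometry of $\lor$ preserving each $h_i$, so $|Y_i^\varphi - Y_j^\varphi|^2_\lor = |Y_i^0 - Y_j^0|^2_\lor$, and the theorem follows.

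The main technical obstacle is the angular computation above, in particular the verification that the hyperbolic rotation angle $\theta$ depends only on $w$ so that the $\varphi$-dependent angular shift is edge-independent. A minor but necessary secondary issue is the sign bookkeeping for $\kappa_\varphi$ and for $h_i$: one must check that the choice of contact congruence and of orientations renders these signs consistent across all four adjacent black vertices $b_i$ of $w$.
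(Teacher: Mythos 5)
Your proposal is correct, and it shares the paper's skeleton: reduce from the null-sphere apices of $\dot\cc^\varphi$ to the contact points of the white sphere with the black spheres via a dilation centered at the white center (your $\kappa_\varphi$ is exactly the paper's ``scaling of $\lor$ centered at $\dot\cc_\circ^\varphi(w)$''), and then exploit Lemma~\ref{lem:zdistance} in the tangent plane $\Pi_0$. The endgame, however, is genuinely different. The paper keeps the complex formula of Definition~\ref{def:xvar}, projects the contact points to $\Pi_0$, and concludes from the four radial distances $|Z_i-P_0^*|$ alone that $|X_\circ(w)|$ is constant, the argument of $X_\circ(w)$ being disposed of implicitly by the realness of the $X$-variables. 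You instead start from the Lorentz-distance formula of Lemma~\ref{lem:xcyclo}, which forces you to prove the stronger statement that the whole quadruple $\{Y_i\}$ is Lorentz-congruent across the family. You do this by splitting off the $n_0$-heights $h_i$ (correctly shown $\varphi$-independent from $|Y_i-P_0^\varphi|^2_\lor=(R_0^*)^2$ together with Lemma~\ref{lem:zdistance}) and by the observation that all four tangency feet in $\Pi_0$ shift by the common angle $\omega(\varphi)$, which works precisely because $\theta$ depends only on $R_0$ and not on the edge index. This costs one extra angular computation but buys more: the full vertex star of contact points is congruent along the family, so the constancy of the argument of $X_\circ(w)$ comes for free rather than via its realness. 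The loose ends you flag --- uniformity of the sign in $\kappa_\varphi$ over the four black vertices, and the sign conventions for the generators $G^\varphi_{i,\pm}$ --- are real but are exactly the conventions the paper's own proof relies on (a single dilation taking all four apices to all four contact points, and the assertion before Lemma~\ref{lem:zdistance} that $G^\varphi_{i,+}$ and $G^\varphi_{i+1,+}$ lie on a common sphere of radius $\sin\varphi$), so they do not constitute a gap relative to the paper's standard of rigor.
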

\begin{proof}
	We consider a family of null congruences  $\dot\cc^\varphi$ and their projection to  $\eucl$, see also Figure \protect{\ref{fig:icnets}} (right).
	Recall that due to Definition~\ref{def:xvar} we have that
	\begin{align}	
		X_\circ(w) = -\frac{(\cpb(b_1)-\cpmw(w))(\cpb(b_2)-\cpmw(w))}{(\cpb(b_3)-\cpmw(w))(\cpb(b_4)-\cpmw(w))}, \label{eq:xvardiezweite}
	\end{align}	
	where $\cpmw(w)$ is the projection of $\dot\cc_\circ^\varphi(w)$ to $\eucl$ and $\cpb(b_i)$ is the projection of $\dot\cc_\bullet^\varphi(b_i)$ to $\eucl$. Moreover, there is a scaling of $\lor$ centered at $\dot\cc_\circ^\varphi(w)$ that takes the four points $\dot\cc_\bullet^\varphi(b_i)$ to the respective contact points $K_i$ of $\ccw(w)$ with $\ccb(b_i)$. Hence, we may calculate $X_\circ(w)$ using the projections  $L^\varphi_i$ of the contact points $K^\varphi_i$ via	\begin{align}	
	X_\circ(w) = -\frac{(L^\varphi_1-\cpmw(w))(L^\varphi_2-\cpmw(w))}{(L^\varphi_3-\cpmw(w))(L^\varphi_4-\cpmw(w))}, 
\end{align}	
	because Equation~\eqref{eq:xvardiezweite} is manifestly invariant under scaling. 
	Furthermore, due to Lemma~\ref{lem:xcyclo} the $X$-variables are Lorentz invariant, thus we can also project to any other spacelike plane instead (of $\eucl \subset \lor$).
	With this in mind, we project the four contact points $K^\varphi_i$  to the plane $\Pi_0$ of Lemma~\ref{lem:zdistance} instead of $\eucl$. This yields exactly the points $Z_i$ of Lemma~\ref{lem:zdistance}. Thus, with $\odot\dot\cp^\varphi_\circ(w) = \cpmw^\varphi(w) = P_0^*$ we obtain the expression
	\begin{align}	
		X_\circ(w) = -\frac{(Z_1-P_0^*)(Z_2-P_0^*)}{(Z_3-P_0^*)(Z_4-P_0^*)}.
	\end{align}			
	 Finally, Lemma~\ref{lem:zdistance} shows that the distances of $Z_i$ to $\cpmw(w) = P_0^*$ are independent of $\varphi$, hence so are the $X$-variables.
\end{proof}

\begin{remark}
	In \cite{admpsiso} we defined the $X$-variables for a general contact congruence, and we showed that the $X$-variables are independent of Laguerre transformations. Since $\cc^\varphi$ is related to $\dot\cc^\varphi$ by a Laguerre transformation, the $X$-variables of $\cc^\varphi$ coincide with those of $\dot\cc^\varphi$ and therefore also do not depend on $\varphi$.
\end{remark}

\begin{remark}
	Theorem~\ref{th:xasso} implies that the $X$-variables for the incircular nets in the associated family of a maximal incircular net are independent of $\varphi$. Thus each maximal incircular net comes with a pair of 1-parameter families of incircular nets that have the same $X$-variables, and have the properties as discussed in Remark~\ref{rem:assocp}.
\end{remark}

\begin{remark}
	For isothermic incircular nets, we showed in \cite{admpsiso} that the $X$-variables are in a subvariety, which we called the \emph{isothermic subvariety}. Theorem~\ref{th:xasso} shows that the associated incircular nets also have $X$-variables in the isothermic subvariety, but they are not isothermic incircular nets. That said, the name \emph{isothermic subvariety} may still be justified, since each $\cc^\varphi$ is \emph{still} an isothermic surface. The associated congruences are still discretizations of isothermic surfaces, just not in curvature-line parametrization. Moreover, as shown in \cite{admpsiso} the isothermic subvariety is a characterization of a certain periodicity in the Miquel dynamics, see also Remark~\ref{rem:miquel}. 
	In fact, it follows from the properties of the incircular nets $\cp^\varphi$ in the associated family, that their center nets $\cpm^\varphi$ have the same periodicity behaviour with respect to Miquel dynamics. However, it is currently unclear how the radii behave under Miquel dynamics.
\end{remark}

\bibliographystyle{alpha}
\bibliography{references}

\end{document}